\numberwithin{equation}{section}
\theoremstyle{plain} 
\newtheorem{proposition}{Proposition}[section]  
\newtheorem{lemma}[proposition]{Lemma}
\newtheorem{corollary}[proposition]{Corollary} 
\newtheorem{theorem}[proposition]{Theorem} 
\theoremstyle{definition} 
\newtheorem{remark}[proposition]{Remark} 
\newtheorem{example}[proposition]{Example} 
\newtheorem{z}[proposition]{}
\newcommand\Tor{\operatorname{Tor}}
\newcommand\Hom{\operatorname{Hom}}
\newcommand\Ext{\operatorname{Ext}}
\newcommand\Ker{\operatorname{Ker}}
\newcommand\Coker{\operatorname{Coker}}
\newcommand\im{\operatorname{Im}}
\newcommand{\id}{\operatorname{id}}
\newcommand\Mic{\operatorname{Mic}}
\newcommand\Tel{\operatorname{Tel}}
\newcommand{\xx}{\underline x}
\newcommand{\ts}{\underline t}
\newcommand{\UU}{\underline U}
\newcommand{\Cech}{{\Check {C}}_{\xx}}
\newcommand{\quism}{\stackrel{\sim}{\longrightarrow}}
\newcommand{\qism}{\stackrel{\sim}{\longrightarrow}}
\newcommand{\lqism}{\stackrel{\sim}{\longleftarrow}}
\author[P.~Schenzel]{Peter Schenzel}
\title[\v{C}ech (co-) complexes]
{\v{C}ech (co-) complexes as Koszul complexes and applications}
\address{Martin-Luther-Universit\"at Halle-Wittenberg,
Institut f\"ur Informatik, D --- 06 099 Halle (Saale), Germany}
\email{peter.schenzel@informatik.uni-halle.de}
\begin{document}

\begin{abstract} 
	In the main results of the paper it is shown that the \v{C}ech (co-) homology 
	might be considered as an appropriate Koszul (co-) homology. 
	Let $\check{C}_{\xx}$ denote the \v{C}ech complex with respect to a system of 
	elements $\xx = x_1,\ldots,x_r$ of a commutative ring $R$. We construct a 
	bounded complex $\mathcal{L}_{\xx}$ of free $R$-modules and a 
	quasi-isomorphism $\mathcal{L}_{\xx} \qism \check{C}_{\xx}$ and isomorphisms  
	$\mathcal{L}_{\xx} \otimes_R X \cong K^{\bullet}(\xx-\UU; X[\UU^{-1}])$ 
	and $\Hom_R(\mathcal{L}_{\xx},X) \cong K_{\bullet}(\xx-\UU;X[[\UU]])$ for an $R$-complex $X$. Here 
	$\xx - \UU$ denotes the sequence of elements $x_1-U_1,\ldots,x_r-U_r$ 
	in the polynomial ring $R[\UU] = R[U_1,\ldots,U_r]$ in the variables $\UU= U_1,\ldots,U_r$ over $R$. Moreover $X[[\UU]]$ denotes the formal power series 
	complex of $X$ in $\UU$ and $X[\UU^{-1}]$ denotes the complex of inverse polynomials of $X$ in $\UU$. Furthermore $K_{\bullet}(\xx-\UU;X[[\UU]])$ resp.
	$K^{\bullet}(\xx-\UU; X[\UU^{-1}])$ denotes the corresponding Koszul complex 
	resp. the corresponding Koszul co-complex. In particular, there is a bounded $R$-free resolution of $\check{C}_{\xx}$ by a certain Koszul complex. 
	This has various 
	consequences e.g. in the case when $\xx$ is a weakly pro-regular sequence. Under this additional assumption it follows that the local cohomology 
	$H^i_{\xx R}(X)$ and the left derived functors of the completion $\Lambda_i^{\xx R}(X), i \in \mathbb{Z},$ is a certain Koszul cohomology and Koszul homology resp. This provides new approaches to the right derived functor 
	of torsion and the left derived functor of completion with various applications.	
\end{abstract}

\subjclass
{Primary: 13D25, 13D45 ; Secondary: 13B35}
\keywords{\v{C}ech complexes, local cohomology, completion, Koszul complexes}

\maketitle
\begin{center}
	\textsl{Dedicated to my daughters Sara and Judith.}
\end{center}

\tableofcontents

\section{Introduction} 
Let $\xx = x_1,\ldots,x_r$ denote a system of elements of a commutative 
ring $R$. The \v{C}ech complex $\check{C}_{\xx}(R)$ (see \ref{comp-1} for its 
construction) plays an essential r\^ole in homological algebra. In the 
case $R$ is Noetherian it was shown by Grothendieck (see \cite{Ga3} and \cite{Ga2}) that there are natural isomorphisms $H^i(\check{C}_{\xx} \otimes_R M) 
\cong H^i_{\mathfrak{a}}(M)$ for all $i \geq 0$ and an $R$-module $M$, where 
$\mathfrak{a} = \xx R$ and $H^i_{\mathfrak{a}}(M)$ denotes the $i$-th 
local cohomology module of $M$ with respect to $\mathfrak{a}$. Note that 
$H^i_{\mathfrak{a}}(M)$ is the $i$-th right derived functor of the section functor 
$\Gamma_{\mathfrak{a}}(M) = \{m \in M |  \mathfrak{a}^r \cdot m = 0 \mbox{ for some } r \in \mathbb{N}\}$. 

Let $X$ denote an $R$-complex for $R$ a Noetherian ring. In the derived category  $\check{C}_{\xx}(X) = \check{C}_{\xx} \otimes_R X$ is a 
representative of ${\rm R} \Gamma_{\mathfrak{a}}(X)$ the right derived 
functor of the section functor (see e.g. \cite{ALL1} resp.  \cite{SS} for a generalization to non-Noetherian rings). Let ${\rm L} \Lambda^{\mathfrak{a}}(X)$ 
denote the left derived functor of the $\mathfrak{a}$-adic completion functor 
of $X$ in the derived category. Then there is an isomorphism 
${\rm L} \Lambda^{\mathfrak{a}}(X) \cong {\rm R} \Hom_R(\check{C}_{\xx},X)$ 
(see e.g. \cite{ALL1} resp. \cite{SS} for a generalization to the non-Noetherian case and unbounded complexes). Here we denote by $\Lambda_i^{\mathfrak{a}}(\cdot)$ the left derived functor of the $\mathfrak{a}$-adic completion $\varprojlim (R/\mathfrak{a}^n \otimes_R\cdot)$. In the case of an $R$-module these functors have been studied at first by Greenlees and May (see \cite{GM}), Lipman et al. (see \cite{ALL1}) and Simon (see \cite{Sam1}). See also the monograph \cite{SS} for further details. 

In order to have a concrete representative of 
${\rm R} \Hom_R(\check{C}_{\xx},X)$ one needs either an injective resolution 
of $X$ or a projective resolution of $\check{C}_{\xx}$. It is a bit surprisingly 
that $\check{C}_{\xx}$ is of finite projective dimension. A first resolution was 
constructed by Greenlees and May (see \cite{GM} and \ref{comp-2}). A more short resolution 
is given in \cite{Sp2} and \cite{SS} (see also \ref{comp-3}). 
Here we present a third resolution $\mathcal{L}_{\xx}$ - in some sense - minimal (see Section 3 and \ref{comp-7}). It turns out (see \ref{coh-9} (B)) that this 
resolution is given by a certain Koszul complex.

Let $R[\UU]$ denote the polynomial ring over 
$R$ in the variables $\UU = U_1,\ldots,U_r$ and let $\xx - \UU$ be the sequence 
of elements $x_1-U_1,\ldots,x_r-U_r$ in $R[\UU]$. For an $R$-complex $X$ let 
$X[[\UU]]$ denote the formal power series complex in the variables $\UU$ (see 
\ref{prel-6} for the precise definition). 

\begin{theorem} \label{int-1}
	Let $X$ denote an arbitrary $R$-complex. 
	With the prevoius notation there is an isomorphism of $R$-complexes 
	\[
	\Hom_R(\mathcal{L}_{\xx},X) \cong K_{\bullet}(\xx-\UU;X[[\UU]]),
	\]
	where $K_{\bullet}(\xx-\UU;X[[\UU]])$ denotes the Koszul complex of 
	the system of elements $\xx - \UU$ in $R[\UU]$ with respect to $X[[\UU]]$.
\end{theorem}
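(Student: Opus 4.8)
The plan is to compute $\Hom_R(\mathcal{L}_{\xx},X)$ directly from the construction of $\mathcal{L}_{\xx}$ in Section 3 and to match it, term by term and differential by differential, with the Koszul complex. Recall that $\mathcal{L}_{\xx}$ is the $r$-fold tensor product over $R$ of the single-element complexes $\mathcal{L}_{x_i}\colon R[U_i]\xrightarrow{\delta_i}R[U_i]$ concentrated in cohomological degrees $0$ and $1$, where $\delta_i$ is given on the monomial basis by $\delta_i(U_i^m)=x_iU_i^m-U_i^{m-1}$ (reading $U_i^{-1}=0$, so that $\delta_i(1)=x_i$). Consequently, in cohomological degree $p$ one has $\mathcal{L}_{\xx}^p=\bigoplus_{|S|=p}R[\UU]\,e_S$, where $R[\UU]$ is free on the monomials $\UU^{\alpha}$, $\alpha\in\mathbb{N}^r$, and the $e_S$ for $S\subseteq\{1,\dots,r\}$ are the standard exterior generators.

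First I would apply $\Hom_R(-,X)$ degreewise. Since $R[\UU]$ is free on the $\UU^{\alpha}$, the functor $\Hom_R(-,X)$ turns this direct sum into the corresponding direct product, giving a natural identification $\Hom_R(R[\UU],X)\cong X[[\UU]]$ that sends $\phi$ to the formal power series $\sum_{\alpha}\phi(\UU^{\alpha})\,\UU^{\alpha}$. This is exactly where the polynomials on the $\mathcal{L}_{\xx}$-side become power series on the Koszul side, and the identification is natural in $X$, hence compatible with the internal differential of the complex $X$. Collecting over the finitely many $S$ with $|S|=p$ yields $\Hom_R(\mathcal{L}_{\xx}^p,X)\cong\bigoplus_{|S|=p}X[[\UU]]\,e_S$, which is precisely the term of $K_{\bullet}(\xx-\UU;X[[\UU]])$ in homological degree $p$, under the reindexing $p\mapsto -p$ that converts the cochain grading of the $\Hom$-complex into the chain grading of the Koszul complex.

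Next I would identify the differentials. Dualizing $\delta_i$ under the identification above, the single-variable computation $\phi(\delta_i(U_i^m))=x_i\phi(U_i^m)-\phi(U_i^{m-1})$ shows that the transpose of $\delta_i$ is multiplication by $x_i-U_i$ on $X[[U_i]]$: multiplication by $x_i$ dualizes coefficientwise, and the shift-down $U_i^m\mapsto -U_i^{m-1}$ dualizes to the shift-up, that is, to multiplication by $U_i$, with no boundary term appearing on the power-series side. Because $\mathcal{L}_{\xx}=\mathcal{L}_{x_1}\otimes_R\cdots\otimes_R\mathcal{L}_{x_r}$, its differential is the usual signed sum of the $\delta_i$, so its transpose is the corresponding signed sum of the operators \emph{multiplication by} $x_i-U_i$, which is exactly the Koszul differential $\partial(\omega\,e_S)=\sum_{i\in S}\pm(x_i-U_i)\omega\,e_{S\setminus\{i\}}$. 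Assembling the degreewise isomorphisms into one map $\Hom_R(\mathcal{L}_{\xx},X)\to K_{\bullet}(\xx-\UU;X[[\UU]])$ and checking compatibility with these differentials gives the asserted isomorphism.

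The one delicate point, and the step I expect to be the main obstacle, is the bookkeeping that makes the duality between \emph{finite support / polynomial} on the $\mathcal{L}_{\xx}$-side and \emph{arbitrary support / formal power series} on the Koszul side come out consistently. Concretely, one must verify that the transpose of the shift differential is genuinely multiplication by $U_i$ on all of $X[[U_i]]$, including that the exceptional term $\delta_i(1)=x_i$ introduces no defect, and that the Koszul signs produced by the tensor-product differential of $\mathcal{L}_{\xx}$ agree with the standard Koszul signs. Both are routine once the identification $\Hom_R(R[\UU],X)\cong X[[\UU]]$ is fixed, and together they complete the proof.
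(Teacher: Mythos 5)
Your proposal is correct and follows essentially the same route as the paper's own proof in \ref{hoc-1}: one identifies $\Hom_R(R[U],X)\cong X[[U]]$, checks that under this identification the dual of the differential of $\mathcal{L}_x$ becomes multiplication by $x-U$, and then extends to $r$ elements through the tensor-product structure of $\mathcal{L}_{\xx}$ and the multiplicative (recursive) structure of the Koszul complex. The only cosmetic difference is that the paper carries out the multi-variable step by recursion on the number of elements, whereas you expand the tensor product and dualize it term by term; the key computation is identical.
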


This is proved in \ref{hoc-1}. The Theorem \ref{int-1} is an improvement of  
\cite[8.1.6]{SS} where a quasi-isomorphism to the Koszul complex 
$K_{\bullet}(\xx-\UU;X[[\UU]])$ is shown. This extension is essential in order to prove the following results, in particular those about local cohomology.
As mentioned above $\Hom_R(\mathcal{L}_{\xx},X)$ 
is a representative of ${\rm R}\Hom_R(\check{C}_{\xx},X)$. So there 
is the following application.

\begin{corollary} \label{int-2} 
	Assume in addition that $\xx$ forms a weakly pro-regular sequence. 
	Let $\mathfrak{a} = \xx R$. Then there are isomorphisms 
	\[
	\Lambda_i^{\mathfrak{a}}(X) \cong H_i(\xx-\UU;X[[\UU]]),
	\]
	for all $i\in \mathbb{Z}$ where $H_i(\xx-\UU;X[[\UU]])$ denotes the $i$-th Koszul homology. 
\end{corollary}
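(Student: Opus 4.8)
The plan is to deduce the statement directly from Theorem~\ref{int-1} by passing to homology, once the left-hand side $\Lambda_i^{\mathfrak{a}}(X)$ has been rewritten through the standard identification of the left derived completion functor with a right-derived Hom out of the \v{C}ech complex. Thus there is essentially no new computation involved: all the substantial work already sits in Theorem~\ref{int-1}, and what remains is to assemble three known facts and take $H_i$.

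First I would record the derived-category isomorphism ${\rm L}\Lambda^{\mathfrak{a}}(X) \cong {\rm R}\Hom_R(\check{C}_{\xx},X)$. This is precisely the step where the hypothesis that $\xx$ is weakly pro-regular is used: weak pro-regularity guarantees that $\check{C}_{\xx}$ represents ${\rm R}\Gamma_{\mathfrak{a}}$, and hence, by the Greenlees--May type duality recalled in the introduction, that ${\rm R}\Hom_R(\check{C}_{\xx},-)$ computes ${\rm L}\Lambda^{\mathfrak{a}}$. Next I would use that $\mathcal{L}_{\xx}$ is a bounded complex of free $R$-modules together with the quasi-isomorphism $\mathcal{L}_{\xx}\qism\check{C}_{\xx}$; since it has only finitely many nonzero free terms, $\Hom_R(\mathcal{L}_{\xx},X)$ is a representative of ${\rm R}\Hom_R(\check{C}_{\xx},X)$ for an arbitrary $R$-complex $X$, with no boundedness condition on $X$. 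Finally, Theorem~\ref{int-1} supplies an honest isomorphism of complexes $\Hom_R(\mathcal{L}_{\xx},X)\cong K_{\bullet}(\xx-\UU;X[[\UU]])$. Chaining these and taking homology in degree $i$ gives
\begin{align*}
\Lambda_i^{\mathfrak{a}}(X) &= H_i({\rm L}\Lambda^{\mathfrak{a}}(X)) \cong H_i({\rm R}\Hom_R(\check{C}_{\xx},X)) \\
&= H_i(\Hom_R(\mathcal{L}_{\xx},X)) \cong H_i(K_{\bullet}(\xx-\UU;X[[\UU]])) = H_i(\xx-\UU;X[[\UU]])
\end{align*}
for every $i\in\mathbb{Z}$, which is the assertion.

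The hard part will not be any single computation but keeping the conventions aligned. One point to watch is the grading: $\Hom_R(\mathcal{L}_{\xx},X)$ is naturally a co-complex, whereas $K_{\bullet}(\xx-\UU;X[[\UU]])$ and $\Lambda_i^{\mathfrak{a}}$ are indexed homologically, so I must check that the isomorphism of Theorem~\ref{int-1} carries the cohomological degree of $\Hom_R(\mathcal{L}_{\xx},X)$ to the homological degree $i$ appearing on both sides above; this is built into Theorem~\ref{int-1} and only needs to be tracked. The second point is to confirm that weak pro-regularity is genuinely needed only for the identification ${\rm L}\Lambda^{\mathfrak{a}}(X)\cong{\rm R}\Hom_R(\check{C}_{\xx},X)$: the isomorphism of complexes in Theorem~\ref{int-1} holds unconditionally, so once that derived-category step is justified, passing to $H_i$ is purely formal, because Theorem~\ref{int-1} is an isomorphism of actual complexes rather than a mere quasi-isomorphism in the derived category.
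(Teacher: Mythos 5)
Your proposal is correct and follows essentially the same route as the paper: the paper's proof (Theorem \ref{weak-2}) likewise reduces everything to the isomorphism of Theorem \ref{int-1} (i.e. \ref{hoc-1}~(B)) after using weak pro-regularity to identify $\Lambda_i^{\mathfrak{a}}(X)$ with the homology of $\Hom_R$ out of a bounded free resolution of $\check{C}_{\xx}$. The only cosmetic difference is that the paper phrases the derived-completion step via the resolution $\check{L}_{\xx}$ and the quasi-isomorphism $\Hom_R(\check{L}_{\xx},X)\qism\Hom_R(\mathcal{L}_{\xx},X)$, whereas you invoke ${\rm L}\Lambda^{\mathfrak{a}}(X)\cong{\rm R}\Hom_R(\check{C}_{\xx},X)$ together with the $K$-projectivity of the bounded free complex $\mathcal{L}_{\xx}$; these are interchangeable formulations of the same argument.
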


This follows also by \cite[8.1.5]{SS} with a different argument. The surprising fact is that it is 
possible to compute the left derived functors of the completion in terms of a 
certain Koszul complex. For the proof and a few other isomorphisms we refer to 
\ref{weak-9}. 

In the following we shall present a dual statement for the 
\v{C}ech complex, i.e. an isomorphism to a certain Koszul co-complex. To this 
end let $R[\UU^{-1}] = R[U_1^{-1},\ldots,U_r^{-1}]$ denote the module of inverse polynomials 
over $R[\UU]$ in the sense of Macaulay (see \cite{Mfs}) and \ref{coh-1}. 
For an $R$-complex $X$ we define the complex of inverse polynomials $X[\UU^{-1}] = X[\UU] \otimes_{R[\UU]} R[\UU^{-1}]$. 

\begin{theorem} \label{int-3}
	With the previous notation there is an isomorphism of $R$-complexes 
	\[
	\mathcal{L}_{\xx}(X) \cong K^{\bullet}(\xx-\UU;X[\UU^{-1}])
	\]
	and a quasi-isomorphism 
	\[
	\check{C}_{\xx}(X) \qism K^{\bullet}(\xx-\UU;X[\UU^{-1}]),
	\]
	where $K^{\bullet}(\xx-\UU;X[\UU^{-1}])$ denotes the Koszul co-complex 
	of the system of elements $\xx-\UU$ in $R[\UU]$ with respect to the 
	complex of inverse polynomials $X[\UU^{-1}]$.
\end{theorem}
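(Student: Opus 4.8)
The plan is to mirror the proof of Theorem \ref{int-1} given in \ref{hoc-1}, replacing throughout the functor $\Hom_R(-,X)$ and the power series $X[[\UU]]$ by the functor $-\otimes_R X$ and the inverse polynomials $X[\UU^{-1}]$. Recall from Section 3 (see \ref{comp-7}) that $\mathcal{L}_{\xx}$ is built as the tensor product $\mathcal{L}_{x_1}\otimes_R\cdots\otimes_R\mathcal{L}_{x_r}$ of single-element complexes, where each $\mathcal{L}_{x_i}$ is a two-term complex of free $R$-modules, and that there is a quasi-isomorphism $\mathcal{L}_{\xx}\qism\check{C}_{\xx}$. I would treat the two assertions separately: first the isomorphism, which is an explicit identification over $R$ followed by a base change, and then the quasi-isomorphism, which is a formal consequence of the isomorphism together with the flatness properties of $\check{C}_{\xx}$.

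For the isomorphism I would work first with $X=R$ and then base-change. At the single-variable level the complex $\mathcal{L}_{x_i}$ is, by construction, the two-term co-complex $R[U_i^{-1}]\xrightarrow{x_i-U_i}R[U_i^{-1}]$, which is exactly $K^{\bullet}(x_i-U_i;R[U_i^{-1}])$; the one thing to verify is that the differential of $\mathcal{L}_{x_i}$ agrees with the Koszul co-differential, i.e. that $U_i$ acts on $R[U_i^{-1}]$ by the Macaulay contraction, a computation identical to the one underlying \ref{coh-1}. Forming the ordered tensor product over $i=1,\ldots,r$ and using that the Koszul co-complex of a sequence is the tensor product of the single-element Koszul co-complexes---together with $R[\UU^{-1}]=R[U_1^{-1}]\otimes_R\cdots\otimes_R R[U_r^{-1}]$---gives an isomorphism $\mathcal{L}_{\xx}\cong K^{\bullet}(\xx-\UU;R[\UU^{-1}])$ of complexes of free $R$-modules. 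Since every term of $\mathcal{L}_{\xx}$ is $R$-free and the Koszul co-differentials are $R[\UU]$-linear, applying $-\otimes_R X$ commutes with forming the Koszul co-complex and merely replaces the coefficient module $R[\UU^{-1}]$ by $R[\UU^{-1}]\otimes_R X=X[\UU^{-1}]$. This yields
\[
\mathcal{L}_{\xx}(X)=\mathcal{L}_{\xx}\otimes_R X\cong K^{\bullet}(\xx-\UU;R[\UU^{-1}])\otimes_R X\cong K^{\bullet}(\xx-\UU;X[\UU^{-1}]),
\]
which is the first assertion.

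For the quasi-isomorphism I would invoke $\mathcal{L}_{\xx}\qism\check{C}_{\xx}$ from \ref{comp-7}. The point is that its mapping cone is a bounded acyclic complex all of whose terms are flat $R$-modules, the terms of $\mathcal{L}_{\xx}$ being free and those of $\check{C}_{\xx}$ being localizations of $R$. A bounded complex of flat modules is K-flat, so tensoring this acyclic cone with the arbitrary $R$-complex $X$ again yields an acyclic complex; since the cone commutes with $-\otimes_R X$, this says precisely that $\mathcal{L}_{\xx}\otimes_R X\qism\check{C}_{\xx}\otimes_R X$ for every $X$, bounded or not. Combining this with the isomorphism of the previous paragraph identifies $\check{C}_{\xx}(X)$ and $K^{\bullet}(\xx-\UU;X[\UU^{-1}])$ in the derived category, which is the asserted quasi-isomorphism.

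The step I expect to be the real obstacle is the verification that the differential of $\mathcal{L}_{\xx}\otimes_R X$ is \emph{literally} the Koszul co-differential on $X[\UU^{-1}]$: one must match the contraction action of each $U_i$ on the inverse-polynomial basis with the shift occurring in the differential of $\mathcal{L}_{x_i}$, and keep the Koszul signs coherent across the $r$-fold tensor product. Once the single-variable identification is pinned down, multiplicativity makes the general case formal, and the quasi-isomorphism is then a soft consequence of K-flatness---the only subtlety there being to ensure the acyclicity argument does not tacitly assume $X$ is bounded.
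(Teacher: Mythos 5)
Your proposal is correct and follows essentially the same route as the paper's own proof: the explicit single-variable identification of $\mathcal{L}_x$ with $K^{\bullet}(x-U;R[U^{-1}])$ (matching $\psi$ with the Koszul co-differential under the Macaulay contraction, as in \ref{coh-4} (A)), the extension to $r$ elements by multiplicativity of both constructions together with $R[\UU^{-1}]\otimes_R R[V^{-1}]\cong R[(\UU,V)^{-1}]$ (as in \ref{coh-4} (B)), and the quasi-isomorphism deduced from $\mathcal{L}_{\xx}(X)\to\check{C}_{\xx}(X)$ being a quasi-isomorphism of (tensor products with) bounded complexes of flat $R$-modules (\ref{comp-7}, \ref{coh-9} (A)). The only divergence is that the paper additionally records an independent, limit-theoretic proof of the quasi-isomorphism part (\ref{coh-2}--\ref{coh-3}), which your argument, like the paper's alternative proof, does not need.
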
 

As mentioned above, if $\xx$ is a weakly pro-regular sequence, then $\check{C}_{\xx}(X)$ is a representative of ${\rm R} \Gamma_{\mathfrak{a}}(X)$ 
in the derived category. Therefore we get the following application:

\begin{corollary} \label{int-4}
	Assume that $\xx$ is a weakly pro-regular sequence with $\mathfrak{a} =\xx R$. 
	Then there are isomorphisms 
	\[
	H^i_{\mathfrak{a}}(X) \cong H^i(\xx-\UU;X[\UU^{-1}]),
	\]
	for all $i \in \mathbb{Z}$, where $H^i(\xx-\UU;X[\UU^{-1}])$ denotes the 
	$i$-th Koszul co-homology.
\end{corollary}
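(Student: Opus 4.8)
The plan is to prove Corollary~\ref{int-4} by combining the two main theorems already established with the Grothendieck-type identification of $\check{C}_{\xx}(X)$ as a representative of $\Rgam(X)$. The key point is that Corollary~\ref{int-4} is an immediate consequence of Theorem~\ref{int-3} together with the weak pro-regularity hypothesis. First I would invoke the quasi-isomorphism $\check{C}_{\xx}(X) \qism K^{\bullet}(\xx-\UU;X[\UU^{-1}])$ supplied by Theorem~\ref{int-3}. A quasi-isomorphism of complexes induces isomorphisms on all cohomology, so we immediately obtain $H^i(\check{C}_{\xx}(X)) \cong H^i(\xx-\UU;X[\UU^{-1}])$ for every $i \in \mathbb{Z}$, where the right-hand side is by definition the $i$-th Koszul co-homology.

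It then remains to identify the left-hand side $H^i(\check{C}_{\xx}(X))$ with the local cohomology $H^i_{\mathfrak{a}}(X)$. This is exactly where the weak pro-regularity assumption enters. As recalled in the introduction, when $\xx$ is weakly pro-regular the \v{C}ech complex $\check{C}_{\xx}(X) = \check{C}_{\xx} \otimes_R X$ is a representative of $\Rgam(X)$ in the derived category (this is the generalization of Grothendieck's theorem to the non-Noetherian setting, as cited from \cite{SS} and \cite{ALL1}). Consequently $H^i(\check{C}_{\xx}(X)) \cong H^i_{\mathfrak{a}}(X)$ for all $i$, the latter being the $i$-th right derived functor of the section functor $\Gamma_{\mathfrak{a}}$ applied to $X$.

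Chaining the two identifications yields the desired isomorphism
\[
H^i_{\mathfrak{a}}(X) \cong H^i(\check{C}_{\xx}(X)) \cong H^i(\xx-\UU;X[\UU^{-1}])
\]
for all $i \in \mathbb{Z}$, completing the proof. The argument is essentially formal once Theorem~\ref{int-3} is in hand; the substantive content lies entirely in that theorem. I do not anticipate any genuine obstacle in the corollary itself, since both ingredients---the quasi-isomorphism and the pro-regular representation of $\Rgam$---are already available. The one point requiring mild care is that these isomorphisms should be natural, so that the identification respects the relevant functorial structure; this naturality is inherited from the explicit nature of the quasi-isomorphism constructed in Theorem~\ref{int-3} and the standard naturality of the \v{C}ech representation of $\Rgam$, so it presents no difficulty.
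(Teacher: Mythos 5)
Your proposal is correct and follows essentially the same route as the paper: the paper's proof of this statement (Corollary \ref{coh-6}) likewise combines the fact that $\check{C}_{\xx}(X)$ represents ${\rm R}\Gamma_{\mathfrak{a}}(X)$ for a weakly pro-regular sequence with the identification $H^i(\xx-\UU;X[\UU^{-1}]) \cong H^i(\check{C}_{\xx}(X))$ coming from the quasi-isomorphism of Theorem \ref{coh-3} (= Theorem \ref{int-3}). The only cosmetic difference is that the paper routes the second ingredient through Corollary \ref{coh-5}, whereas you invoke Theorem \ref{int-3} directly, which is if anything the cleaner citation.
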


See \ref{coh-6} for the proof. That is, for a Noetherian ring $R$ we are 
able to compute the left derived 
functors of the completion resp. the local cohomology functors for any 
ideal in terms of Koszul homology resp. Koszul co-homology. As a further application we prove a certain duality: 

\begin{theorem} \label{int-5}
	Let $\xx$ denote a system of elements of a commutative ring $R$. Then there 
	is an isomorphism 
	\[
	\Hom_R(K^{\bullet}(\xx-\UU;X[\UU^{-1}]),Y) \cong  
	K_{\bullet}(\xx-\UU;\Hom_R(X,Y)[[\UU]])
	\]
	for two $R$-complexes $X,Y$. 
\end{theorem}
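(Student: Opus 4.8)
The plan is to establish this duality isomorphism by combining the two structural isomorphisms already provided by Theorems \ref{int-1} and \ref{int-3}, together with a standard Hom-tensor adjunction. First I would recognize that the right-hand side $K_{\bullet}(\xx-\UU;\Hom_R(X,Y)[[\UU]])$ is, by Theorem \ref{int-1} applied to the complex $\Hom_R(X,Y)$, isomorphic to $\Hom_R(\mathcal{L}_{\xx},\Hom_R(X,Y))$. On the other side, Theorem \ref{int-3} gives $\mathcal{L}_{\xx}(X) = \mathcal{L}_{\xx}\otimes_R X \cong K^{\bullet}(\xx-\UU;X[\UU^{-1}])$, so the left-hand side becomes $\Hom_R(\mathcal{L}_{\xx}\otimes_R X, Y)$. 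Thus the whole statement reduces to the adjunction isomorphism
\[
\Hom_R(\mathcal{L}_{\xx}\otimes_R X, Y) \cong \Hom_R(\mathcal{L}_{\xx},\Hom_R(X,Y)).
\]

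Next I would verify this adjunction carefully at the level of complexes. Since $\mathcal{L}_{\xx}$ is a \emph{bounded} complex of \emph{free} (hence finitely generated projective in each degree, being the Koszul resolution constructed in Section 3) $R$-modules, the tensor-Hom adjunction holds as an honest isomorphism of $R$-complexes, not merely up to quasi-isomorphism. For each fixed free module $\mathcal{L}_{\xx,j} \cong R^{n_j}$ one has the elementary natural isomorphism $\Hom_R(R^{n_j}\otimes_R X, Y)\cong \Hom_R(R^{n_j},\Hom_R(X,Y))$, and these assemble into a morphism of total complexes that is compatible with the differentials. The key point requiring attention is the sign and grading bookkeeping in the total complex of the double (Hom of tensor) complex, since both $X$ and $Y$ are allowed to be arbitrary (possibly unbounded) $R$-complexes; here I would invoke the usual sign conventions for the Hom and tensor complexes so that the adjunction is a morphism of complexes.

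Assembling the three isomorphisms then yields
\[
\Hom_R(K^{\bullet}(\xx-\UU;X[\UU^{-1}]),Y) \cong \Hom_R(\mathcal{L}_{\xx}\otimes_R X, Y) \cong \Hom_R(\mathcal{L}_{\xx},\Hom_R(X,Y)) \cong K_{\bullet}(\xx-\UU;\Hom_R(X,Y)[[\UU]]),
\]
which is precisely the claimed duality. I expect the main obstacle to be not the adjunction itself but the verification that the relevant isomorphisms are compatible, i.e. that the identification of $\mathcal{L}_{\xx}(X)$ with the Koszul co-complex (Theorem \ref{int-3}) and of $\Hom_R(\mathcal{L}_{\xx},\Hom_R(X,Y))$ with the Koszul complex (Theorem \ref{int-1}) can be made simultaneously and functorially. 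Because both theorems are stated as genuine isomorphisms of $R$-complexes (the improvement over \cite[8.1.6]{SS} noted after Theorem \ref{int-1} is exactly what makes this chaining possible), the argument goes through on the nose rather than only in the derived category; confirming the naturality in $X$ and $Y$ is the part I would treat with the most care.
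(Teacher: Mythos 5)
Your proof is correct, but it follows a different route than the paper's designated proof of this statement. The paper proves Theorem \ref{int-5} as Theorem \ref{dual-6}, whose argument never touches the resolution $\mathcal{L}_{\xx}$: it rewrites $X[\UU^{-1}]$ as $R[\UU^{-1}]\otimes_{R[\UU]}X[\UU]$, applies Hom-tensor adjunction \emph{over the polynomial ring} $R[\UU]$, and then invokes the inverse-polynomial duality $\Hom_R(R[\UU^{-1}],Y)\cong Y[[\UU]]$ of Proposition \ref{dual-0}, which is the technical heart of Section 7; as a by-product it exhibits the intermediate term $\Hom_{R[\UU]}(K^{\bullet}(\xx-\UU;X[\UU]),Y[[\UU]])$. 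You instead treat Theorems \ref{int-1} and \ref{int-3} as black boxes and glue them with the ordinary $R$-linear adjunction $\Hom_R(\mathcal{L}_{\xx}\otimes_RX,Y)\cong\Hom_R(\mathcal{L}_{\xx},\Hom_R(X,Y))$, bypassing Proposition \ref{dual-0} entirely. What you have in effect rediscovered is the paper's own earlier Corollary \ref{dual-2}: there the role of your $\mathcal{L}_{\xx}$ is played by $K^{\bullet}(\xx-\UU;R[\UU^{-1}])$ (isomorphic to $\mathcal{L}_{\xx}$ by \ref{coh-4}), the role of Theorem \ref{int-1} by Theorem \ref{dual-1}, and the role of Theorem \ref{int-3} by the identity $K^{\bullet}(\xx-\UU;R[\UU^{-1}])\otimes_RX\cong K^{\bullet}(\xx-\UU;X[\UU^{-1}])$. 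So your decomposition is legitimate and arguably more modular, since it makes the duality a formal consequence of the two structural theorems; the proof of \ref{dual-6} buys, in exchange, the explicit $R[\UU]$-linear middle term and keeps the dependence on Macaulay's inverse system visible.

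Two small corrections. First, $\mathcal{L}_{\xx}$ is a bounded complex of free modules, but they are \emph{not} finitely generated in each degree (already $\mathcal{L}_x$ has components $R[U]$ and $UR[U]$, free of countably infinite rank, see \ref{comp-4}); fortunately your argument never needs finiteness, because the adjunction $\Hom_R(L\otimes_RX,Y)\cong\Hom_R(L,\Hom_R(X,Y))$ holds for arbitrary $R$-complexes $L,X,Y$ with the usual sign conventions — finite generation would only matter for identities of the type $\Hom_R(L,Y)\otimes_RX\cong\Hom_R(L,Y\otimes_RX)$, which you do not use. Second, the closing worry about "compatibility" of the identifications is unnecessary: the theorem asserts only the existence of an isomorphism of complexes, and a composition of isomorphisms is an isomorphism regardless of how each was constructed; naturality in $X$ and $Y$ does hold (each of your three isomorphisms is natural), but nothing in the statement obliges you to verify it.
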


The proof is shown in  \ref{dual-6}. There are several application of this 
duality (see Section 7 for the details). In Section 2 we collect various 
homological preliminaries needed in the paper. Section 3 is devoted to the 
free resolutions of the \v{C}ech complex $\check{C}_{\xx}$ for a system of 
elements $\xx = x_1,\ldots, x_r$. In Section 4 there is a description of the 
\v{C}ech homology as Koszul homology of the complex of formal power series. 
In Section 5 we add some further properties to weakly pro-regular and 
$M$-weakly pro-regular sequences as they were studied in \cite{SS}. 
The module of inverse polynomials is introduced in Section 6. It is 
used in order to describe \v{C}ech cohomology as Koszul co-homology 
of certain complexes of inverse polynomials. Duality is studied in Section 7. 
It provides a certain duality similar to the local duality for Gorenstein 
rings (see \ref{dual-8}). In Section 8 we discuss the behaviour of the 
(co-)homologies by enlarging the number of elements in the corresponding systems.

The results of the present paper are extensions of some of those of \cite{SS} for 
\v{C}ech homology. The results about \v{C}ech cohomology are new and use Macaulay's 
inverse systems. For the notations we follow those of \cite{SS}.

\section{Preliminaries}

\begin{z} {\it Homological peliminaries.} \label{prel-1}
	(A) Let $X,Y$ two complexes of $R$-module and $\phi : X \to Y$ a morphism. 
	We use the notion of the cone $C(\phi)$ and the fibre $F(\phi)$ 
	as defined in \cite[Section 1.5]{SS}. Note that 
	$F(\phi)=C(\phi)^{[-1]}$. Moreover, there are short exact sequences 
	of $R$-complexes 
	\[
	0 \to Y \to C(\phi) \to X^ {[1]} \to 0 \quad {\rm and} \quad 0 \to Y^ {[-1]}
	\to F(\phi) \to X \to 0.
	\]
	Note that $\phi : X \to Y$ is a quasi-isomorphism if and only if
	$C(\phi)$ (respectively $F(\phi)$) is exact.\\
	(B) For further details about homological algebra we refer 
	to \cite{SS}. Here we recall the following:  A morphism $f : X\to Y$ of complexes is called a quasi-isomorphism if it
	induces isomorphisms at the (co-) homology level.  We use  the notation
	$X \qism Y$ or $Y\lqism X$ to denote a quasi-isomorphism. As usual we say
	that two complexes $X$ and $Y$ are \textit{quasi-isomorphic} and we
	write $X\simeq Y$ if there is a finite sequence of quasi-isomorphisms
	$X\qism \cdots \lqism  Y$.
\end{z}

\begin{z} {\it Kozul complexes I.} \label{prel-2}
	(A) Let $X$ denote a complex of $R$-modules and $x$  an
	element of $R$. We have a natural morphism $\mu(x;X): X \stackrel{x}{\to} X$
	induced by the multiplication map $X^i \stackrel{x}{\to} X^i$
	for all $i \in \mathbb{Z}$.
	
	We define the ascending and descending Koszul complexes by
	\[
	K^{\bullet}(x;X) = F(\mu(x;X)) \text{ and } K_{\bullet}(x;X) =
	C(\mu(x;X)).
	\]
	For a sequence $\xx = x_1,\ldots,x_k$ of elements of $R$ and an element
	$y  \in R$ we denote by $\xx,y$ the sequence $x_1,\ldots,x_k, y$. Then
	we define inductively
	\[
	K^{\bullet}(\xx,y;X) = K^{\bullet}(y;K^{\bullet}(\xx;X))) \text{ and }
	K_{\bullet}(\xx,y;X) = K_{\bullet}(y;K_{\bullet}(\xx;X))).
	\]
	In particular, we write $K_{\bullet}(\xx)$ respectively $K^{\bullet}(\xx)$ in the case when
	$X = R$ considered as a complex in degree zero.\\
	(B) Let $m \geq n$ be positive integers. For $x \in R$ and an $R$-complex 
	$X$ there are the following two commutative diagrams 
	\[
	\xymatrix{
		X \ar[r]^{x^{m-n}} \ar[d]^{x^m} & X \ar[d]^{x^n}\\
		X \ar@2{-}[r] &X
		}
	\mbox{ and }
	\xymatrix{
		X  \ar@2{-}[r] \ar[d]^{x^n} & X \ar[d]^{x^m} \\
		X \ar[r]^{x^{m-n}} &X.}
	\]
	So there are natural morphisms $K_{\bullet}(x^m;X) \to K_{\bullet}(x^n;X)$ and 
	$K^{\bullet}(x^n;X) \to K^{\bullet}(x^m;X)$ for all $m \geq n$. \\
	(C) Let $\xx = x_1,\ldots,x_r$ denote a system of elements of $R$. 
	We set $\xx^{(n)} = x_1^n,\ldots,x_r^n$ for an integer $n$. Then the previous construction extends to natural morphisms 
	\[
	K_{\bullet}(\xx^{(m)};X) \to K_{\bullet}(x^{(n)};X) \mbox{ and }  
	K^{\bullet}(x^{(n)};X) \to K^{\bullet}(x^{(m)};X) 
	\]
	for all integers $m \geq n$.
\end{z}

\begin{z} {\it Koszul complexes II.} \label{prel-3}
	(A) A more general definition of the Koszul complex is the following 
	(see e.g. \cite[Section 1.6]{BrH}). Let $f : L \to R$ denote 
	an $R$-homomorphism. Then $K_{\bullet}(f)$ is defined as the exterior algebra 
	$\wedge L$ with the differential induced by $f$. For an $R$-module $M$ we define $K_{\bullet}(f;M) = K_{\bullet}(f) \otimes_R M$ and $K^{\bullet}(f;M) = \Hom_R(K_{\bullet}(f),M)$ 
	(see \cite{BrH}). For a system of elements $\xx = x_1,\ldots,x_r$ we define 
	$K_{\bullet}(\xx;M) = K_{\bullet}(f;M)$, where $f : R^r \to R$ is the 
	$R$-linear map defined by $e_i \mapsto x_i, i = 1,\ldots,r$, where 
	$e_1,\ldots,e_r$ is the standard free basis of $R^r$. Recall that this definition 
	of $K_{\bullet}(\xx;M)$ and $K^{\bullet}(\xx;M)$ coincides 
	with those of \ref{prel-3}\\
	(B) If $f_i : L_i \to R, i = 1,2,$ are two $R$-linear maps and 
	$\phi : L_1 \to L_2$ is an $R$-homomorphism then there is a morphism 
	of $R$-complexes $K_{\bullet}(f_1;M) \to K_{\bullet}(f_2 \cdot \phi;M)$. 
	This is an isomorphism of $R$-complexes if $\phi$ is an isomorphism. In particular, let $\underline{y} = y_1,\ldots,y_r$ and $\xx = x_1,\ldots,x_r$ denote two sequences of elements such that $(\underline{y}) = (\xx)\cdot\mathfrak{A}$ for an invertible $r\times r$-matrix $\mathfrak{A}$. 
	Then there are isomorphisms $K_{\bullet}(\underline{y};M) \cong K_{\bullet}(\xx;M)$ and $K^{\bullet}(\underline{y};M) \cong K^{\bullet}(\xx;M)$.
	\\
	(C) Let $\xx = x_1,\ldots,x_r$ denote a system of elements in $R$. 
	For a positive integer $n$ we write $\xx^{(n)}  = x_1^n,\ldots,x_r^n$ as above.
	For two integers $m \geq n$ there is a commutative diagram 
	\[
	\xymatrix{
		R^r \ar[r]^{\xx^{(m)}} \ar[d]^{\mathfrak{B}} & R \ar@2{-}[d] \\
		R^r \ar[r]^{\xx^{(n)}}  &R,
	}
	\]
	where $\mathfrak{B}$ is the diagonal matrix with entries 
	$x_i^{m-n}, i = 1,\ldots,r$.
	Whence there is a morphism of complexes $K_{\bullet}(\xx^m;M) \to K_{\bullet}(\xx^n;M)$. It coincides with the iterative construction as done above.\\
	(D) All of the previous constructions extend to $K_{\bullet}(\xx;X)$ and 
	to $K^{\bullet}(\xx;X)$ for an $R$-complex $X$.
\end{z}

For some induction arguments we need the following technical lemma.

\begin{proposition} \label{prel-7}
	Let $X$ denote an $R$-complex and $x \in R$ an element. Then there is 
	a short exact sequence of $R$-complexes
	\begin{itemize}
		\item[(a)] 
		$0 \to (0:_X x)_{[-1]} \to K_{\bullet}(x;X) \to X/xX \to 0$,
		\item[(b)] 
		$0  \to 0:_X x \to K^{\bullet}(x;X) \to (X/xX)^{[+1]} \to 0.$
	\end{itemize}
	Here $0:_X x = \Hom_R(R/xR,X)$ and $X/xX = X \otimes_R R/xR$ denote the 
	sub-complex and the factor-complex respectively.
\end{proposition}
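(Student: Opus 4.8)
The plan is to read both sequences off the realisation of the one–element Koszul complexes as a cone and a fibre, and to produce the two outer terms by factoring the multiplication map through its image. Write $\mu = \mu(x;X) : X \to X$ for the multiplication, let $xX = \im \mu \subseteq X$ be the image subcomplex, and factor $\mu$ as $X \stackrel{\bar\mu}{\twoheadrightarrow} xX \hookrightarrow X$, so that $\Ker \bar\mu = 0:_X x$ and the cokernel of $xX \hookrightarrow X$ is $X/xX$. By \ref{prel-2}(A) we have $K_{\bullet}(x;X) = C(\mu)$ and $K^{\bullet}(x;X) = F(\mu)$. The one tool I will use repeatedly is the following consequence of \ref{prel-1}(A) and the long exact homology sequence: for a term-wise short exact sequence $0 \to A \to B \to C \to 0$ of complexes, the cone $C(B \to C)$ is quasi-isomorphic to a single shift of $A$ and the fibre $F(A \to B)$ is quasi-isomorphic to a single shift of $C$; the precise shift is governed by the identity $F(\phi) = C(\phi)^{[-1]}$.

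For part (a) I would first write down the natural surjection $\pi : K_{\bullet}(x;X) = C(\mu) \to X/xX$ induced by the augmentation $K_{\bullet}(x;R) \to R/xR$ after applying $-\otimes_R X$; concretely $\pi$ kills the Koszul degree $1$ part and reduces the Koszul degree $0$ part modulo $x$, and a one-line check shows it is a morphism of complexes that is surjective in each degree. The point is then to identify $\Ker \pi$. A direct inspection of the differential of $C(\mu)$ shows that $\Ker \pi$ is, on the nose, the cone $C(\bar\mu)$ of the corestriction $\bar\mu : X \twoheadrightarrow xX$. Since $0 \to 0:_X x \to X \stackrel{\bar\mu}{\to} xX \to 0$ is term-wise exact, the tool above gives $\Ker \pi = C(\bar\mu)$ and $C(\bar\mu) \qism (0:_X x)_{[-1]}$, and inserting this quasi-isomorphism into the honest sequence $0 \to \Ker\pi \to K_{\bullet}(x;X) \to X/xX \to 0$ yields (a).

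Part (b) is handled dually, with cones replaced by fibres. On $K^{\bullet}(x;X) = F(\mu)$ there is a natural injection $\iota : 0:_X x = \Hom_R(R/xR,X) \to K^{\bullet}(x;X)$, obtained by applying $\Hom_R(-,X)$ to the same augmentation $K_{\bullet}(x;R) \to R/xR$; it is split injective in each degree. Inspecting the differential of $F(\mu)$ identifies $\Coker \iota$, again on the nose, with the fibre $F(xX \hookrightarrow X)$ of the inclusion of the image. As $0 \to xX \hookrightarrow X \to X/xX \to 0$ is term-wise exact, the dual tool gives $\Coker \iota = F(xX \hookrightarrow X)$ and $F(xX \hookrightarrow X) \qism (X/xX)^{[+1]}$, which is (b). (One could instead dualise (a), but the direct fibre computation is self-contained and keeps the shifts visible.)

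The only genuine work is the sign and shift bookkeeping, and that is where I expect the main obstacle to lie. One must fix once and for all the conventions for $C(\phi)$, $F(\phi)$ and the shifts $^{[1]}$, $^{[-1]}$ of \ref{prel-1}(A), and then check that the differentials of $C(\bar\mu)$ and of $F(xX\hookrightarrow X)$ carry exactly the right signs; in particular one must verify that the fibre identification in (b) produces the up-shift $(X/xX)^{[+1]}$ recorded in the statement rather than $(X/xX)^{[-1]}$, which is purely a matter of matching the grading to the convention $F(\phi)=C(\phi)^{[-1]}$. One must also keep in mind that these identifications are quasi-isomorphisms and not equalities: the honest kernel $\Ker\pi$ is the cone $C(\bar\mu)$, which is isomorphic to $(0:_X x)_{[-1]}$ on the nose only when $0 \to 0:_X x \to X \to xX \to 0$ splits in each degree, so (a) and (b) are to be read in the sense of quasi-isomorphism of \ref{prel-1}(B). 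Everything else reduces to the routine verification that the displayed maps commute with the differentials, which is exactly the computation the lemma is designed to feed into the induction arguments of the later sections.
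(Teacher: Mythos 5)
Your proof is correct, and it follows the same basic route the paper gestures at --- reading both sequences off the cone and fibre presentations of $K_{\bullet}(x;X)$ and $K^{\bullet}(x;X)$ via the natural projection onto $X/xX$ and the natural inclusion of $0:_X x$ --- but it supplies precisely the step that the paper's one-line proof (``clear by the definition of $K_{\bullet}(x;X)$ as the cone \dots the maps are morphisms of $R$-complexes'') omits, and that step is not cosmetic. Read literally, the displayed sequences are \emph{not} degree-wise exact with the stated outer terms: for $X=R$ concentrated in degree zero and $x$ a non-unit non-zerodivisor, (a) becomes $0 \to 0 \to K_{\bullet}(x;R) \to R/xR \to 0$, which would force the two-term complex $R \stackrel{x}{\to} R$ to be isomorphic to $R/xR$; this fails. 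What is true is exactly what you prove: the kernel of the projection is, on the nose, the cone $C(X \twoheadrightarrow xX)$, the cokernel of the inclusion is the fibre $F(xX \hookrightarrow X)$, and these are quasi-isomorphic --- not equal --- to $(0:_X x)_{[-1]}$ and $(X/xX)^{[+1]}$ respectively, by your cone-of-surjection/fibre-of-injection tool. So the proposition holds with the outer terms taken up to quasi-isomorphism, and this weaker reading is all that the paper's later applications use: in \ref{weak-5} and \ref{coh-2} only the quasi-isomorphisms $K_{\bullet}(\UU^{(n)};X[\UU]) \to X[\UU]/\UU^{(n)}X[\UU]$ and their cohomological counterparts are extracted, in a situation where $0:_{X[\UU]}U_i^n = 0$ makes your extra cone and fibre acyclic. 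In short, your proposal does not merely reproduce the paper's proof; it identifies and repairs the imprecision in both the statement and the proof.

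Two marginal slips in your write-up, neither of which is used in the argument: the inclusion $0:_X x \to K^{\bullet}(x;X)$ is injective but in general not split injective in each degree ($0:_{X^n} x$ need not be a direct summand of $X^n$; take $X = \mathbb{Z}/4\mathbb{Z}$ and $x = 2$); and even when $0 \to 0:_X x \to X \to xX \to 0$ splits as a sequence of complexes, $C(X \twoheadrightarrow xX)$ is not isomorphic to $(0:_X x)_{[-1]}$ on the nose --- a contractible summand $C(\id_{xX})$ survives --- it is only homotopy equivalent to it. Plain injectivity and the quasi-isomorphism are all your argument needs, and you have both.
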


\begin{proof}
	The proof of (a) is clear by the definition of $K_{\bullet}(x;X)$ as the 
	cone of $\mu(x;X): X \stackrel{x}{\to} X$. Note that the 
	maps are morphisms of $R$-complexes. The proof of (b) is similar by using the 
	fibre. 
\end{proof}

\begin{z} \label{prel-4} {\it Telescope and Microscope.} (see \cite{GM} and 
	\cite{SS})
	(A) Let $\{X_n, \phi_n\}$ denote a direct system of $R$-complexes with 
	$\phi_n : X_n \to X_{n+1}$ the transition map. Then there is a natural map 
	\[
	\Phi_X : \oplus X_n \to \oplus X_n, \; x_n \mapsto x_n -\phi_n(x).
	\]
	The telescope $\Tel(\{X_n\})$ is defined as the cone $C(\Phi_X)$. 
	Whence there is a natural quasi-isomorphism 
	\[
	\Tel(\{X_n\}) \to \varinjlim X_n.
	\]
	Let $X$ denote an $R$-complex. 
	For a system of elements $\xx$ the Koszul co-complexes  $K^{\bullet}(\xx^{(n)};X)$ with the natural maps forms a direct system.
	We define $T(\xx;X) = \Tel(\{K^{\bullet}(\xx^{(n)};X)\})$.\\
	(B) Let $\{X_n, \psi_n\}$ denote an inverse system of $R$-complexes with 
	transition maps $\psi_n : X_{n+1} \to X_n$.  Then there is a natural map 
	\[
	\Psi_X : \textstyle{\prod} X_n \to \textstyle{\prod}X_n, \; (x_n) 
	\mapsto (x_n - \psi_n(x_{n+1})).
	\]
	The microscope $\Mic(\{X_n\})$ is defined as the fibre $F(\Psi_X)$.
	So there is a natural morphism 
	\[
	\varinjlim X_n \to\Mic(\{X_n\}). 
	\]
	It is a quasi-isomorphism provided $\{X_n\}$ satisfies degree-wise the 
	Mittag-Leffler condition.
	For an $R$-complex $X$ and a system of elements $\xx$ the Koszul complexes $K_{\bullet}(\xx^{(n)};X)$ with the natural maps forms an inverse system.
	We define $M(\xx;X) = \Mic(\{K_{\bullet}(\xx^{(n)};X)\})$.\\
	(C) For the homology of $\Mic(\{X_n\})$ there are the following short exact sequences 
	\[
	0 \to \varprojlim{}^1 H_{i+1}(X_n) \to H_i(\Mic(\{X_n\})) \to 
	\varprojlim H_i(X_n) \to 0
	\]
	for all $i \in \mathbb{Z}$. For the proof in the case of modules we 
	refer to \cite{GM} and to \cite[4.2.3]{SS} for the general case. In particular, if $\{X_n\} \to \{Y_n\}$ is a family of inverse systems of 
	quasi-isomorphisms it yields a quasi-isomorphism $\Mic(\{X_n\}) \to \Mic(\{Y_n\})$.
\end{z}

\begin{z} {\it Weakly pro-regular sequences.}\label{prel-5}
	(A) Let $M$ denote an $R$-module. The sequence $\xx$ is called 
	$M$-weakly pro-regular if the inverse system of Koszul homology 
	modules $\{H_i(\xx^{(n)};M)\}$ is pro-zero for all $i \not= 0$ with 
	$\xx^{(n)} = x_1^n, \ldots,x^n$. Note that the system  $\{H_i(\xx^{(n)};M)\}$ 
	is pro-zero if for all $n$ there is an $m \geq m$ such that the natural 
	map $H_i(\xx^{(m)};M) \to H_i(\xx^{(n)};M)$ is zero. The sequence $\xx = x_1,\ldots,x_r$ is called weakly pro-regular if it is $R$-weakly pro-regular 
	(see \cite{Sp2} or \cite[Section 7.3]{SS} for a more detailed description).\\
	(B) In case of a single element $x \in R$ and an $R$-module $M$ the element 
	$x$ is called $M$-weakly pro-regular if for $n \geq 1$ there is an 
	$m \geq n$ such that $H_1(x^m;M) \stackrel{x^{m-n}}{\longrightarrow} 
	H_1(x^n;M)$ is the zero map. By identifying $H_1(x^n;M) = 0:_M x^n$ it follows that this is equivalent to the fact that $M$ is of bounded 
	$xR$-torsion (see e.g. \cite{Sp2} or \cite{SS} for more details).
\end{z}

\begin{z} {\it Notations.} \label{prel-6}
	(A) Let $M$ be an $R$-module and let $\Lambda$ be a set. Then $M^{(\Lambda)}$ 
	and $M^{\Lambda}$ denote the direct sum and the direct product of $M$ over $\Lambda$ respectively.\\
	(B) Let $\mathbb{N}$ denote the non-negative integers; and  let
	$\mathbb{N}_+$ denote the positive integers. Let $R$ denote a commutative ring. Let $U$ resp. $\UU = U_1,\ldots,U_r$ 
	denote a variable resp. a system of $r$ variables over $R$. Then $R[U], R[\UU]$ and $R[[U]], R[[\UU]]$ denote the corresponding polynomial and 
	formal power series rings over $R$. Note that $R[\UU] \cong R^{(\mathbb{N}^r)}$ is a free $R$-module.  We identify $R[[\UU]] = \Hom_R(R[\UU],R)$ and therefore $R[[\UU]] \cong R^{\mathbb{N}^r}$. \\
	(C) For an $R$-module $M$ we consider $M[\UU]$ and $M[[\UU]]$ the polynomial 
	and the formal power series module over $\UU$. Clearly $M[\UU]$ and $M[[\UU]]$ possesses the structure of an $R[\UU]$-module. Furthermore,  
	$M[[\UU]]$ admits the structure of an $R[[\UU]]$-module and $M[[\UU]] \cong  
	\Hom_R(R[\UU],M)$. Moreover, there is an isomorphism 
	$M[[\UU]] \cong \varinjlim M[\UU]/\UU^{(n)}M[\UU]$, where $\UU^{(n)} = U_1^n,\ldots,U_r^n$. \\
	(D) Let $X$ denote an $R$-complex. Similarly we also have the degree-wise
	identification of the $R[\UU]$-complex $\Hom_R(R[\UU], X)$ with
	$X[[\UU]] := X[[U_1,\ldots,U_r]]$. Moreover, $X[[\UU]]$ is an $R[\UU]$-complex and
	an $R[[\UU]]$-complex hence also an $R$-complex. There is an obvious
	isomorphism of $R$-complexes $X[[\UU]] \cong X^{\mathbb{N}^k}$. 
	The construction is functorial in $X$, a morphism of $R$-complexes
	$X \to Y$  induces a morphism $X[[\UU]] \to Y[[\UU]]$ of $R[[\UU]]$-complexes.
	Moreover, if $X\to Y$ is a quasi-isomorphism, then $X[[\UU]] \to Y[[\UU]]$ is a quasi-isomorphism too. This
	follows easily since  direct products commute with homology.\\	
\end{z}

\section{Resolutions of  \v{C}ech complexes}
\begin{z} {\it The \v{C}ech complex.} \label{comp-1}
	(A) Let $R$ denote a commutative ring and $x \in R$ an element. Let 
	$M$ be an $R$-module. Then $M_x$ denotes the localization with respect to
	$S = \{x^n | n \in \mathbb{N}\}$, the multiplicatively 
	closed set by the non-negative powers of $x$. There is the 
	natural homomorphism $\iota = \iota(x;M) : M \to M_x, m \mapsto m/1$.
	
	Let $X$ denote an $R$-complex. Then the previous construction 
	extents to a morphism $\iota(x;X): X \to X_x \cong X \otimes_R R_x$. 
	We define $\check{C}_x(M) = F(\iota(x;X))$ as the fibre of $\iota(x;X)$. 
	With $\check{C}_x = \check{C}_x(R)$ it follows by the definition 
	$\check{C}_x(X) \cong \check{C}_x \otimes_R X$. \\
	(B) Let $\xx = x_1,\ldots,x_r$ be a system of elements of $R$ and $y \in R$ 
	an element. Then we define $\check{C}_{\xx,y}(X) = \check{C}_y(\check{C}_{\xx}(X))$. In particular it follows that 
	\[
	\check{C}_{\xx}(X) \cong \check{C}_{x_1} \otimes_R \cdots \otimes_R \check{C}_{x_r} 
	\otimes_RX.
	\]
	(C) Moreover, it is known that $R_x = \varinjlim \{R_n,x\}$ with 
	$R_n = R$ and $R_n \to R_{n+1}$ is the multiplication by $x$. For two integers $m \geq n$ there is a commutative diagram 
	\[
	\xymatrix{
	K^{\bullet}(x^n): \ar[d] &	0 \ar[r]  & R \ar[r]^{x^n} \ar@2{-}[d] & R \ar[r] \ar[d]^{x^{m-n}} & 0 \\
	K^{\bullet}(x^m): &	0 \ar[r] & R \ar[r]^{x^m} & R \ar[r] & 0.
	}
	\]
	With the previous arguments it yields that $\varinjlim K^{\bullet}(x^n)	 \cong \check{C}_x$. As an easy application it follows that $\varinjlim K^{\bullet}(x^n;X) \cong \check{C}_{x}(X)$ for an $R$-complex $X$. 
	
	Let $\xx = x_1,\ldots,x_r$ denote a sequence of elements of $R$. Then the previous arguments imply the isomorphism $\varinjlim K^{\bullet}(\xx^{(n)};X) \cong \check{C}_{\xx}(X)$ for an $R$-complex $X$. 
\end{z}

\begin{z} {\it First free resolution.}\label{comp-2}
	By the construction $\check{C}_{\xx} \cong \otimes_{i=1}^r \check{C}_{x_i}$. Therefore 
	it has the structure 
	\[
	\check{C}_{\xx} : 0 \to R \to \oplus_{i} R_{x_i} \to \oplus_{i < j} R_{x_ix_j} \to \ldots \to R_{x_1\cdots x_r} \to 0
	\]
	for a system of elements $\xx = x_1,\ldots,x_r$. That is, $\check{C}_{\xx}$ is a bounded 
	complex of flat $R$-modules, whence of finite flat dimension.
	
	With the previous notation we have $\check{C}_{\xx}(X) \cong \varinjlim 
	K^{\bullet}(\xx^{(n)};X)$. By the definition of the direct limit there 
	is a short exact sequence of complexes 
	\[
	0 \to \oplus_{n \in \mathbb{N}} K^{\bullet}(\xx^{(n)};X) \stackrel{\Phi}{\longrightarrow} \oplus_{n \in \mathbb{N}} K^{\bullet}(\xx^{(n)};X) 
	\to \check{C}_{\xx}(X) \to 0.
	\]
	We define $T(\xx;X)$ the telescope $\Tel(\{K^{\bullet}(\xx^{(n)};X)\})$ as above. By the construction of the cone 
	and the above short exact sequence there is a quasi-isomorphism of complexes
	\[
	T(\xx;M) \quism \check{C}_{\xx}(X).
	\]
	Now it follows that $T(\xx) = T(\xx;R)$ is a complex of free $R$-modules since 
	each $K^{\bullet}(\xx^{(n)};R)$ is a complex of free $R$-modules and $T(\xx)$ 
	is formed by a direct sum of them. Hence it follows that $T(\xx)^i = 0$ for 
	$i \notin [-1,r]$. That means $T(\xx)$ is a bounded free resolution of $\check{C}_{\xx}$ 
	for a sequence of elements $\xx = x_1,\ldots,x_r$ of $R$.
		
	This construction is a slight modification of a corresponding one for modules 
	as done by Greenlees and May (see \cite{GM}).
\end{z}

\begin{z} {\it Second free resolution.} \label{comp-3}
	(A) Let $R[U]$ denote the polynomial ring in the variable $U$ over $R$. 
	The short exact sequence $0 \to R[U] \stackrel{1-xU}{\longrightarrow} R[U] \to R_x \to 0$ provides a free resolution of $R_x$ as an $R$-module. This follows since 
	$R_x = \varinjlim \{R_n,x\}$ with 
	$R_n = R$ and $R_n \to R_{n+1}$ is the multiplication by $x$.
	Let 
	\[
	P_x : 0 \to R[U]  \stackrel{1-xU}{\longrightarrow} R[U] \to 0 
	\]
	denote this free resolution of $R_x$. There is a natural morphism 
	$g_x: R\to P_x$ inserted in the commutative diagram
	\[
	\xymatrix{
		R \ar@2{-}[r] \ar[d]_{g_x} & R  \ar[d]^{\iota_x} \\
		P_x  \ar[r]^-\sim & R_x 
	}
	\]
	Let $\check{L}_x:= F(g_x)$ denote the fibre of the natural morphism 
	$g_x: R \to P_x$. That is 
	\[
	\ldots \to 0 \to RU^{-1} \oplus R[U] \stackrel{\phi}{\longrightarrow} R[U] 
	\to 0 \to \ldots, \; \phi : 
	(rU^{-1}, f(U)) \mapsto r - (1-xU) f(U).
	\]
	Then there is a natural quasi-isomorphism 
	\[
	F(g_x)= \check{L}_x \quism \check{C}_x=F(\iota_x).
	\] 
	It provides a $R$-free resolution of $\check{C}_x$ (see also \cite{Sp2} or \cite{SS} for more details). \\
	(B) Let $X$ denote an $R$-complex. Then we define $\check{L}_x(X) = \check{L}_x \otimes_RX$. Also note that $\check{L}_x(X) \cong F(g_x \otimes \id_X)$. Because $\check{L}_x \to \check{C}_x$ is a quasi-isomorphism between 
	bounded complexes of flat $R$-modules it induces a quasi-isomorphism 
	$\check{L}_x(X) \to \check{C}_x(X)$. Moreover, if $X \to Y$ is a quasi-isomorphism 
	of $R$-complexes it induces quasi-isomorphisms $\check{L}_x(X) \to \check{L}_x(Y)$ and  $\check{C}_x(X) \to \check{C}_x(Y)$. 
	
	For a system of elements $\xx = x_1,\ldots, x_r$ and $y \in R$ we define recursively 
	$\check{L}_{\xx,y}(X) = \check{L}_y(\check{L}_{\xx}(X))$. It follows that there is a 
	natural quasi-isomorphism $\check{L}_{\xx}(X) \to \check{C}_{\xx}(X)$. Moreover, 
	by the definition there is an isomorphism $\check{L}_{\xx}(X) \cong \check{L}_{x_1}\otimes_R \cdots \otimes_R \check{L}_{x_r} \otimes_R X$. \\
	(C) Let $\xx = x_1,\ldots,x_r$ denote a system of elements. Then it follows that 
	$\check{L}_{\xx}$ is a bounded complex of free $R$-modules. Because of the 
	quasi-isomorphism $\check{L}_{\xx} \to \check{C}_{\xx}$ it is a bounded free resolution 
	of the  \v{C}ech complex $\check{C}_{\xx}$. 
	We have that $\check{L}_x^i = 0$ for $i \notin [0,r]$. 
\end{z}

\begin{z} \label{comp-4} {\it Some constructions.}
	As before let $R[U]$ denote the polynomial ring in the variable $U$ over 
	$R$. As above we identify $R[U] = \oplus_{n \in \mathbb{N}} R{e_n}$, where 
	$e_n = (0, \ldots,0,1,0,\ldots)$ with $1$ at the $n$-th place, $n \in \mathbb{N}$. 
	Here $\mathbb{N}$ denotes the non-negative integers, while $\mathbb{N}_+$ 
	denotes the positive integers. Then we define a complex 
	\[
	\mathcal{L}_x : \ldots \to 0 \to R[U] \stackrel{\psi}{\longrightarrow} 
	U R[U] \to 0 \to \ldots,
	\]
	where $UR[U] = \oplus_{n \in \mathbb{N}_+} R{e_n}$ and the $R$-homomorphism 
	$\psi$ is given by 
	\[
	\psi : R[U] \to UR[U], r(U) \mapsto r(0) -(1-xU)r(U)
	\] 
	with $r(U) = r_0+r_1U + \dots + r_nU^n \in R[U]$. Then we consider the 
	following morphism of $R$-complexes
	\[
	\xymatrix{
		\mathcal{L}_x : \ar[d] & 0 \ar[r] & R[U] \ar[r]^{\psi} \ar[d]^{f_0} & 
		UR[U] \ar[r] \ar[d]^{f_1} & 0 \\
		\check{C}_x : & 0 \ar[r] & R \ar[r]^{\iota_x} & R_x \ar[r] & 0
	}
	\] 
	with 
	\[
	f_0: R[U] \to R, r(U) \mapsto r(0),\quad f_1: UR[U] \to R_x, r(U) \mapsto r(1/x).
	\] 
	Clearly $\iota_x \cdot f_0 = f_1 \cdot \psi,$ so that it is a morphism of $R$-complexes.
\end{z}

\begin{lemma} \label{comp-5}
	With the previous notation the morphism $\mathcal{L}_x \to \check{C}_x$ is a quasi-isomorphism.
\end{lemma}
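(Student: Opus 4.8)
The plan is to exploit that both $\mathcal{L}_x$ and $\check{C}_x$ are concentrated in cohomological degrees $0$ and $1$, so that the morphism $f = (f_0,f_1)$ is a quasi-isomorphism precisely when $f_0$ induces an isomorphism $H^0(\mathcal{L}_x) \to H^0(\check{C}_x)$ and $f_1$ induces an isomorphism $H^1(\mathcal{L}_x) \to H^1(\check{C}_x)$. Equivalently one could show that the mapping cone $C(f)$, namely $0 \to R[U] \to UR[U] \oplus R \to R_x \to 0$, is exact; in either formulation the whole matter reduces to a single divisibility statement about $R[U]$. I shall organise the argument around the two cohomology modules.

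First I would treat $H^0$. A direct computation shows that $r(U) = \sum_i r_i U^i$ lies in $\ker \psi$ if and only if $(1-xU)r(U)$ equals the constant $r_0$, which forces $r_i = x^i r_0$ for all $i$ together with $x^{n+1} r_0 = 0$ once $\deg r = n$. Thus $\ker\psi$ consists exactly of the polynomials $r_0(1 + xU + \cdots + x^n U^n)$ with $x^{n+1}r_0 = 0$, and $f_0 : r(U) \mapsto r(0)$ restricts to a bijection between $\ker\psi$ and the $xR$-torsion submodule $\ker(\iota_x) = \{s \in R : x^k s = 0 \text{ for some } k\} = H^0(\check{C}_x)$: surjectivity is witnessed by $s \mapsto s(1 + xU + \cdots + x^n U^n)$, and injectivity is immediate since $r_0 = 0$ forces $r = 0$.

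The key step, on which the computation of $H^1$ rests, is the divisibility lemma: for $h \in R[U]$ one has $h \in (1-xU)R[U]$ if and only if $h(1/x) = 0$ in $R_x$, where $h(1/x)$ denotes the image of $h$ under the evaluation $R[U] \to R_x$, $U \mapsto 1/x$. The forward direction is clear, since $1-xU$ maps to $0$ under this evaluation. For the converse I would invert $1-xU$ in the formal power series ring, writing $a(U) = h(U)\sum_{k \geq 0} x^k U^k \in R[[U]]$, so that $(1-xU)a(U) = h(U)$ formally. Writing $H \in R$ for the element determined by $h(1/x) = H/x^n$ with $n = \deg h$, a short calculation gives $a_m = x^{m-n} H$ for all $m \geq n$. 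Hence $a(U)$ is a genuine polynomial if and only if $x^K H = 0$ for some $K$, which is exactly the condition $h(1/x) = 0$ in $R_x$; in that case $h = (1-xU)a$ with $a \in R[U]$ and $a(0) = h(0)$.

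With the lemma in hand, $H^1$ follows quickly. Since $1-xU$ evaluates to $0$ at $U = 1/x$, one computes $f_1(\psi(r)) = r(0) \in R$, so $f_1$ carries $\im\psi$ into $\im\iota_x = R \subseteq R_x$ and induces $\bar f_1 : \operatorname{coker}\psi \to R_x/R = H^1(\check{C}_x)$. Surjectivity of $\bar f_1$ is inherited from the evident surjectivity of $f_1$ (as $a/x^k = f_1(aU^k)$ for $k \geq 1$), while injectivity is precisely the lemma: if $f_1(b) \in R$, say $b(1/x) = c$, then $h := b - c$ satisfies $h(1/x) = 0$, hence $h = (1-xU)a$ for some $a \in R[U]$, and then $\psi(-a) = -a(0) + (1-xU)a = b$, so $b \in \im\psi$. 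Combining the $H^0$ and $H^1$ isomorphisms yields the quasi-isomorphism $\mathcal{L}_x \qism \check{C}_x$. I expect the main obstacle to be the converse direction of the divisibility lemma: one must produce an honest polynomial quotient out of the formal power series inverse of $1-xU$, and it is exactly the $xR$-torsion of $R$ that obstructs naive degree- or factor-theorem arguments and so must be controlled through the power-series coefficients $a_m = x^{m-n}H$.
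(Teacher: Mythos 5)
Your proof is correct. At the top level it follows the same skeleton as the paper's argument: both reduce the claim to showing that the induced maps $g_0 : \Ker \psi \to \Ker \iota_x$ and $g_1 : \Coker \psi \to R_x/\iota_x(R)$ are isomorphisms, and your treatment of $H^0$ and of the surjectivity of $g_1$ coincides with the paper's. The genuine difference is in the injectivity of $g_1$. The paper proceeds by coset manipulation: given $r(U) = r_1U+\cdots+r_nU^n$ with $f_1(r(U)) \in \iota_x(R)$, it subtracts $\psi(s(U))$ for the explicit polynomial with coefficients $s_i = -\sum_{j=0}^{i} x^{i-j}r_j$, reducing $r(U)$ modulo $\im \psi$ to a monomial $sU^n$ with $x^k s = 0$, and then kills this monomial by the iterated congruence $sU^n \equiv xsU^{n+1} \equiv \cdots \equiv x^k s U^{n+k} = 0 \pmod{\im \psi}$. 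You instead isolate a standalone divisibility lemma --- $h \in (1-xU)R[U]$ if and only if $h(1/x) = 0$ in $R_x$ --- proved by inverting $1-xU$ in $R[[U]]$ and checking that the power-series quotient truncates to a polynomial exactly when the element $H$ is annihilated by a power of $x$. The two computations are secretly the same: your truncated power-series coefficients $a_i$ for $i < n$ are the negatives of the paper's $s_i$, your $H$ is the paper's $s$, and your truncation criterion plays the role of the paper's iteration. What your packaging buys is a cleaner, reusable intermediate statement: your lemma says precisely that $0 \to R[U] \stackrel{1-xU}{\longrightarrow} R[U] \to R_x \to 0$ is exact, i.e.\ it re-proves from scratch the exactness underlying the resolution $P_x$ of \ref{comp-3} (A), which the paper justifies instead by a direct-limit argument; it also makes transparent exactly where bounded versus unbounded $x$-torsion enters, namely as the obstruction to the power series truncating. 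What the paper's version buys is that it never leaves polynomial arithmetic in $R[U]$ for the ring $R[[U]]$.
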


\begin{proof}
	It will be enough to show that the induced $R$-homomorphisms 
	\[
	g_0 : \Ker \psi \to \Ker \iota_x \mbox{ and } 
	g_1 : \Coker \psi \to R_x/\iota_x(R)
	\] 
	are isomorphisms. First let $r \in \Ker \iota_x \setminus \{0\}$, that 
	is $r \in 0:_R x^n$ for a certain integer $n \geq 1$. Then we define 
	$r(U) = r_0+r_1U +\ldots +r_{n-1}U^{n-1}$ with $r_i = x^ir$ for $i = 0,\ldots,n-1$. Then $r(U) \in \Ker \psi$, so that $g_0$ is onto. 
	
	Let $r(U) = r_0 + r_1U + \ldots+r_nU^n \in \Ker g_0$. It yields that 
	$r_0 = 0$ and $r(U) \in \Ker \psi$. Therefore $xr_n = 0$ and $r_i = xr_{i-1}$ for $i = 1,\ldots,n$ and $r(U) = 0$, whence $g_0$ is injective. 
	
	Let $r/x^n +\iota_x(R) \in \Coker \iota_x$ with $n \geq 1$. It is the image 
	of $rU^n +\im \psi$, and $g_1$ is onto. Now let $r(U) + \im \psi \in 
	\Ker g_1$ with $r(U) = r_1U + \ldots+r_nU^n$ with $n \geq 1$. Then 
	$r_1/x +\ldots + r_n/x^n = -r_0/1$ for a certain $r_0 \in R$. This provides
	an element $s \in R$ and a certain $k$ such that 
	$r_n + xr_{n-1}+ \ldots+x^nr_0 = s \in 0:_R x^k$. Now we define 
	$s(U) = s_0 + s_1U + \ldots +s_{n-1}U^{n-1}$ with 
	\[
	s_i = -\sum_{j=0}^{i}x^{i-j}r_j \quad \mbox{ for } i = 0,\ldots,n-1.
	\]  
	Therefore $r(U)-[s(0) - (1-xU)s(U)] = (r_n+xr_{n-1} + \ldots + x^{n-1}r_1 +x^nr_0)U^n = sU^n$ with $s \in 0:_Rx^k$. In order to prove that $g_1$ 
	is injective it will be enough to show that $sU^n \in \im \psi$. But this 
	follows since $sU^n \cong xsU^{n+1} \mod (\im \psi), n \geq 1,$ and by 
	induction $sU^n \cong x^ksU^{n+k} \mod (\im \psi)$ which finishes the proof 
	since $x^k s = 0$.
\end{proof}

In the following we shall give a different proof that $\check{L}_x$ is a free resolution 
of $\check{C}_x$. Moreover, we study the relation of the two free resolutions 
$\check{L}_x$ and $\mathcal{L}_x$ of $\check{C}_x$. 

\begin{theorem} \label{comp-6}
	Let $x \in R$ denote an element of a commutative ring $R$. With $\phi$ as in 
	\ref{comp-3} (A) and $\psi$ as in \ref{comp-4} 
	there is a commutative diagram with exact columns
	\[
	\xymatrix{
		0 \ar[d] &  & 0 \ar[d]  &  0 \ar[d]& \\
		\mathcal{E}: \ar[d]&0 \ar[r] & RU^{-1} \ar[r]^{\tau} \ar[d]^{i_1} & R \ar[d]^{i_0} \ar[r] & 0 \\ 
		\check{L}_x: \ar[d]& 0 \ar[r] & RU^{-1} \oplus R[U] \ar[d]^{p_1} \ar[r]^{\phi} & 
		R[U] \ar[r] \ar[d]^{p_0} &0\\
		\mathcal{L}_x: \ar[d] & 0 \ar[r] & R[U] \ar[r]^{\psi} \ar[d] & 
		UR[U] \ar[r] \ar[d] & 0\\
		0&  & 0   &  0. &
	}
	\]
	Here we have the injections 
	\[
	i_1 : RU^{-1} \to RU^{-1} \oplus R[U], rU^{-1} \mapsto (rU^{-1},0) 
	\mbox{ and } i_0 : R \to R[U], r \mapsto r
	\] 
	and the projections 
	\[p_1 : RU^{-1} \oplus R[U] \to R[U], (rU^{-1},r(U)) \mapsto r(U) \quad \mbox{ and } \quad 
	p_0 : R[U] \to UR[U], r(U) \mapsto r(U)-r(0)
	\] 
	and $\tau$ is $\operatorname{id}_R$. 
	The natural morphism $\check{L}_x \to \mathcal{L}_x$ is a quasi-isomorphism. 
	Moreover $\mathcal{L}_x$ is a direct summand of $\check{L}_x$. 
\end{theorem}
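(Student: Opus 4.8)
The plan is to read the displayed array as a short exact sequence of $R$-complexes and to deduce both assertions from it. Writing $i=(i_1,i_0)$ and $p=(p_1,p_0)$ for the two vertical maps, I would first check that the array is commutative with exact columns; granting this, it is exactly a short exact sequence of complexes
\[
0 \to \mathcal{E} \xrightarrow{\,i\,} \check{L}_x \xrightarrow{\,p\,} \mathcal{L}_x \to 0.
\]

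Exactness of the columns is formal. The left column $0\to RU^{-1}\xrightarrow{i_1}RU^{-1}\oplus R[U]\xrightarrow{p_1}R[U]\to 0$ is the tautological split sequence of a direct sum, and the right column $0\to R\xrightarrow{i_0}R[U]\xrightarrow{p_0}UR[U]\to 0$ is the decomposition of a polynomial into its constant term and its higher part: $p_0(r(U))=r(U)-r(0)$ has kernel precisely the constants $i_0(R)$. Commutativity amounts to two short computations. For the upper square both $\phi i_1$ and $i_0\tau$ carry $rU^{-1}$ to the constant polynomial $r$. For the lower square one computes $p_0\phi(rU^{-1},r(U))=p_0\bigl(r-(1-xU)r(U)\bigr)$; since the summand $r$ affects only the constant coefficient, which $p_0$ discards, this equals $r(0)-(1-xU)r(U)=\psi(r(U))=\psi p_1(rU^{-1},r(U))$.

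The quasi-isomorphism is then immediate: $\mathcal{E}$ is the complex $RU^{-1}\xrightarrow{\tau}R$ with $\tau=\id_R$ an isomorphism, so $\mathcal{E}$ is exact (even contractible). Feeding the short exact sequence above into the long exact homology sequence, the vanishing of the homology of $\mathcal{E}$ forces $p$ to induce isomorphisms on homology, i.e. $\check{L}_x\qism\mathcal{L}_x$.

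The one genuinely delicate point is the direct-summand claim, which asserts a splitting of complexes rather than a degreewise one. I would exhibit an explicit chain-level section $s\colon\mathcal{L}_x\to\check{L}_x$ of $p$. The naive degreewise splittings --- sending $r(U)\mapsto(0,r(U))$ in the left degree and including $UR[U]\hookrightarrow R[U]$ in the right degree --- do split the columns but fail to commute with the differentials, the discrepancy being exactly the constant term $r(0)$. The remedy is to record that constant term in the $RU^{-1}$ slot: set $s(r(U))=\bigl(r(0)U^{-1},\,r(U)\bigr)$ in the left degree and let $s$ be the inclusion $UR[U]\hookrightarrow R[U]$ in the right degree. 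Then $p\circ s=\id_{\mathcal{L}_x}$, and $s$ is a chain map because $\phi\bigl(r(0)U^{-1},r(U)\bigr)=r(0)-(1-xU)r(U)$ now coincides with $\psi(r(U))$ viewed in $R[U]$. A section of $p$ splits the sequence, so $\check{L}_x\cong\mathcal{E}\oplus\mathcal{L}_x$ and $\mathcal{L}_x$ is a direct summand. Alternatively, one may argue that the sequence, being degreewise split with contractible kernel $\mathcal{E}$, splits automatically up to isomorphism of complexes; but the explicit $s$ is cleaner and self-contained.
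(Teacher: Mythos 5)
Your proof is correct and follows essentially the same route as the paper: you read the diagram as a short exact sequence of complexes $0 \to \mathcal{E} \to \check{L}_x \to \mathcal{L}_x \to 0$, verify commutativity and column exactness by the same direct computations, and deduce the quasi-isomorphism from the long exact homology sequence together with the exactness of $\mathcal{E}$. Where you go beyond the paper is the direct-summand claim, which the paper settles with only the word ``Clearly'': your explicit chain-level section $s(r(U)) = \bigl(r(0)U^{-1},\, r(U)\bigr)$ in one degree and the inclusion $UR[U] \hookrightarrow R[U]$ in the other is indeed a chain map with $p \circ s = \id_{\mathcal{L}_x}$, and it correctly supplies the splitting \emph{of complexes} (not merely a degreewise one) that the statement requires, so your write-up is, if anything, more complete than the paper's.
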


\begin{proof}
	It is easy to check that $i_0 \cdot \tau = \phi \cdot i_1$ and 
	$p_0 \cdot \phi = \psi \cdot p_1$ and that the vertical sequences 
	are short exact sequences. So there is a morphism 
	$\check{L}_x \to \mathcal{L}_x$. By the long exact cohomology sequence 
	it follows that it is a quasi-isomorphism since $\mathcal{E}$ is exact. 
	Clearly $\mathcal{L}_x$ is a direct summand of $\check{L}_x$.
\end{proof}

\begin{z} \label{comp-7}
	(A) Let $\mathcal{L}_x$ denote the $R$-complex as defined in \ref{comp-4}. 
	Note that it is the fibre of $\psi : R[U] \to U R[U]$, that is, $\mathcal{L}_x = F(\psi)$.
	It is a complex of free $R$-modules. For an $R$-complex $X$ we put $\mathcal{L}_x(X) = \mathcal{L}_x \otimes_R X$. Therefore, if $X \to Y$ is a quasi-isomorphism, then 
	$\mathcal{L}_x(X) \to \mathcal{L}_x(Y)$ is a quasi-isomorphism too. 
	By the construction there are quasi-isomorphisms 
	\[
	\check{L}_x(X) \to \mathcal{L}_x(X) \to \check{C}_x(X) .
	\]
	Note that $\check{L}_x, \mathcal{L}_x, \check{C}_x$ are bounded complexes of flat $R$-modules. \\
	(B) Let $\xx = x_1,\ldots,x_r$ be a system of elements of $R$ and let $y\in R$. 
	Then we define recursively $\mathcal{L}_{\xx,y}(X) = \mathcal{L}_y(\mathcal{L}_{\xx}(X))$. By induction and the use of (A) it follows 
	that there are quasi-isomorphisms
	\[
	\check{L}_{\xx}(X) \to \mathcal{L}_{\xx}(X) \to \check{C}_{\xx}(X) 
	\]
	for an $R$-complex $X$.\\
	(C) ({\textit{Third free resolution.}}) For $X = R$ the previous 
	quasi-isomorphisms provide an $R$-free resolutions of 
	the \v{C}ech complex $\check{C}_{\xx}$, that is $\mathcal{L}_{\xx} \to \check{C}_{\xx}$. 
	Note that $\check{L}_{\xx}$ is not a minimal free resolution. 
\end{z}

\section{\v{C}ech homology via Koszul complexes}
For a system of elements $\xx = x_1,\ldots,x_r$ of a commutative ring $R$ 
we have that the quasi-isomorphism $\check{L}_{\xx} \to \mathcal{L}_{\xx}$ provides 
free resolutions of the \v{C}ech complex $\check{C}_{\xx}$. For an $R$-complex $X$ 
we denote the homology of $\Hom_R(\mathcal{L}_{\xx},X) \quism \Hom_R(\check{L}_{\xx},X)$ 
as the \v{C}ech homology of $X$. Recall that it is not possible to work with 
$\Hom_R(\check{C}_{\xx},\cdot)$ since it is not functorial in the derived category. 
For that reason it is important to have free resolutions of $\check{C}_{\xx}$. 
This part is an improvement of results of \cite[Chapter 8]{SS} needed in the subsequent parts.

\begin{z} \label{hoc-1} {\it The complex $\Hom_R(\mathcal{L}_{\xx},X)$.}
	(A) We start with the complex $\mathcal{L}_x$ with respect to a single element 
	$x \in R$. That is 
	$$
	\mathcal{L}_x : 0 \to R[U] \stackrel{\psi}{\longrightarrow} UR[U] \to 0.
	$$
	For an $R$-module $M$ we apply the functor $\Hom_R(\cdot,M)$. Therefore 
	we get the complex
	\[
	\Hom_R(\mathcal{L}_x,M) : 0 \to \Hom_R(UR[U],M) \stackrel{\psi^{\vee}}{\longrightarrow} \Hom_R(R[U],M) 
	\to 0.
	\]
	The multiplication $R[U] \stackrel{U}{\longrightarrow} UR[U]$  induces an isomorphism $\Hom_R(UR[U],M) \to \Hom_R(R[U],M)$. 	
	For an $R$-module $M$ we identify $\Hom_R(R[U],M)$ by $M[[U]]$, the formal power series 
	module in the variable $U$ over $M$. That is, $M[[U]]$ consists of all 
	elements $\sum_{i\geq 0}m_i U^i$ with $m_i \in M, i \in \mathbb{N}$. 
	Then the complex becomes the following form
	\[
	\Hom_R(\mathcal{L}_x,M) : 0 \to M[[U]] \stackrel{\rho}{\longrightarrow}
	M[[U]] \to 0
	\]
	where the homomorphism $\rho$ is given by 
	\[
	\rho: M[[U]] \to M[[U]], \; m(U) \mapsto xm_0 + (xm_1-m_0)U + (xm_2-m_1)U^2 
	+ \ldots,
	\]
	where $m(U) = m_0 + m_1U +m_2U^2 + \ldots$. It follows that 
	$$
	\rho(m(t)) = (x-U) m(U) = \textstyle{\sum}_{i\geq 0}(xm_i-m_{i-1})U^i \mbox{ with } m_{-1}=0.
	$$ 
	Therefore the $R$-homomorphism can be considered as an $R[U]$-homomorphism, 
	namely multiplication by $x-U$. We may consider the complex as a complex 
	of $R[U]$-modules and it becomes 
	\[
	K_{\bullet}(x-U;M[[U]]): 0 \to M[[U]] \stackrel{x-U}{\longrightarrow} 
	M[[U]] \to 0,
	\]
	the Koszul complex of the $R[U]$-module $M[[U]]$ with respect to $x-U$. 
	Since $x-U$ annihilates the homology of $\mathcal{K}(x;M)$ they are $R$-modules 
	because of $R[U]/(x-U)R[U] \cong R$. 
	
	This construction extents in a natural way 
	to an $R$-complex $X$. This follows by a degree-wise inspection. We have 
	\[
	\Hom_R(\mathcal{L}_x,X) : 0 \to \Hom_R(UR[U],X) \stackrel{\psi^{\vee}}{\longrightarrow} \Hom_R(R[U],X) \to 0.
	\]
	Note also that $\Hom_R(\mathcal{L}_x,X) \cong K_{\bullet}(x-U;X[[U]])$ because 
	the Koszul complex is the cone of the multiplication map $\mu_{x-U} : X[[U]] \to X[[U]]$.
	\\
	(B) For a system of elements $\xx = x_1,\ldots,x_r$ of  $R$ and $y \in R$ 
	we define recursively $$\Hom_R(\mathcal{L}_{\xx,y},X) = \Hom_R(\mathcal{L}_y, 
	\mathcal{L}_{\xx}(X)).$$ As a consequence of the recursive definition of the 
	Koszul complex it follows that 
	\[
	\Hom_R(\mathcal{L}_{\xx},X) \cong K_{\bullet}(\xx-\UU;X[[\UU]]),
	\]
	where $\UU = U_1,\ldots,U_r$ denotes a set of variables over $R$ and $\xx-\UU 
	= x_1-U_1,\ldots,x_r-U_r$. That is, the complex for local homology is isomorphic 
	to a certain Koszul complex. Note that $K_{\bullet}(\xx-\UU;X[[\UU]])$ is a complex 
	of $R$- and also of $R[\UU]$-modules. Its homology modules are annihilated 
	by $(\xx-\UU)R[\UU]$; so they are $R[\UU]/(\xx-\UU)R[\UU] \cong R$-modules.
\end{z}

As a first step for understanding the above Koszul complex we shall investigate 
the following particular situation. For an $R$-module $M$ let $\Hom_R(R_x,M) \to 
M$ denote the natural homomorphism $\phi \mapsto \phi(1)$.

\begin{lemma} \label{hoc-2}
	Let $x \in R$ be an element of a commutative ring $R$. 
	Let $M$ be an $R$-module and let $\hat{M}^{xR}$ denote 
	its $xR$-adic completion. 
	\begin{itemize}
		\item[(a)] There is an isomorphism and an epimorphism  
		$$
		H_0(x-U;M[[U]]) \cong M[[U]]/(x-U)M[[U]] \twoheadrightarrow \hat{M}^{xR}.
		$$
		\item[(b)] There is an isomorphism 
		$$H_1(x-U;M[[U]]) \cong \Ker(\Hom_R(R_x,M) \to M).
		$$
		\item[(c)] If $M$ is of bounded $xR$-torsion, then 
		$$H_1(x-U;M[[U]]) = 0 \;\text{ and }\; H_0(K_{\bullet}(x-U;M[[U]])) \cong \hat{M}^{xR}.
		$$ 
	\end{itemize}
\end{lemma}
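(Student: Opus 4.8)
The plan is to use that, by \ref{hoc-1}(A), the complex $K_{\bullet}(x-U;M[[U]])$ is the two-term complex $0 \to M[[U]] \xrightarrow{x-U} M[[U]] \to 0$, so that $H_0 = M[[U]]/(x-U)M[[U]]$ and $H_1 = (0:_{M[[U]]}(x-U))$ are literally the cokernel and kernel of multiplication by $x-U$ (this is also \ref{prel-7} applied to the element $x-U \in R[U]$ acting on the module $M[[U]]$). Writing $m(U)=\sum_{i\ge0}m_iU^i$ I would use throughout that $(x-U)m(U)=\sum_{i\ge0}(xm_i-m_{i-1})U^i$ with $m_{-1}=0$. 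For (a) the isomorphism with $M[[U]]/(x-U)M[[U]]$ is then the definition, and the content is the epimorphism to $\hat M^{xR}$: I would define $\epsilon\colon M[[U]]\to\hat M^{xR}$ by $\sum m_iU^i\mapsto\sum m_ix^i$, where the partial sums $\sum_{i<n}m_ix^i$ form a compatible sequence in $\varprojlim M/x^nM$. A one-line telescoping shows $\epsilon((x-U)m(U))=\sum(xm_i-m_{i-1})x^i=0$, so $\epsilon$ factors through $H_0$; surjectivity is immediate, since writing a compatible sequence in $\varprojlim M/x^nM$ via successive differences exhibits it as $\sum m_ix^i=\epsilon(\sum m_iU^i)$.

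For (b) I would read the kernel off directly: $(x-U)m(U)=0$ is equivalent to $xm_0=0$ together with $m_{i-1}=xm_i$ for all $i\ge1$, i.e.\ a kernel element is a sequence $(m_i)$ with $m_i=xm_{i+1}$ and $xm_0=0$. Since $R_x=\varinjlim(R\xrightarrow{x}R\xrightarrow{x}\cdots)$ gives $\Hom_R(R_x,M)=\varprojlim(\cdots\xrightarrow{x}M\xrightarrow{x}M)$, with the evaluation $\phi\mapsto\phi(1)$ corresponding to projection onto the bottom component, the index shift $n_{i+1}:=m_i$, $n_0:=0$, carries $H_1$ isomorphically onto $\Ker(\Hom_R(R_x,M)\to M)$; I would check that this shift and its obvious inverse $m_i:=n_{i+1}$ are mutually inverse and respect the respective defining relations.

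For (c) I would first kill $H_1$. For $(m_i)\in H_1$ the relations give $m_0=x^im_i$, hence $x^{i+1}m_i=xm_0=0$, so every $m_i$ lies in $\Gamma_{xR}(M)$; if $x^k\Gamma_{xR}(M)=0$, then $m_i=x^km_{i+k}$ with $m_{i+k}\in\Gamma_{xR}(M)$ forces $m_i=0$, whence $H_1=0$. It remains to show $\epsilon$ injective. For $m(U)\in\Ker\epsilon$ the hypothesis $\sum_{i<n}m_ix^i\in x^nM$ yields $t_n\in M$ with $\sum_{i<n}m_ix^i=x^nt_n$ and $t_0=0$; putting $\delta_n:=m_n+t_n-xt_{n+1}$ one finds $x^n\delta_n=0$ (so $\delta_n\in 0:_M x^n\subseteq\Gamma_{xR}(M)$) and $\delta_0=0$. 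Then $a(U):=\sum_{i\ge0}t_{i+1}U^i$ gives $(x-U)a(U)=m(U)-\delta(U)$, reducing matters to $\delta(U)=\sum\delta_iU^i\in(x-U)M[[U]]$ with all $\delta_i$ torsion. Here bounded torsion is essential: because $x^k\Gamma_{xR}(M)=0$, the a priori infinite series $b_i:=-\sum_{l\ge0}x^l\delta_{i+1+l}$ is a finite sum lying in $\Gamma_{xR}(M)$, and a short check (using $x^i\delta_i=0$, so that $b_{-1}=-\sum_l x^l\delta_l=0$) shows $(x-U)b(U)=\delta(U)$. Thus $\Ker\epsilon=(x-U)M[[U]]$ and $\epsilon$ descends to the asserted isomorphism $H_0\cong\hat M^{xR}$.

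I expect the one genuine obstacle to be exactly this last injectivity of $\epsilon$. Solving $(x-U)a(U)=m(U)$ in power series amounts to an infinite backward recursion $a_{i-1}=xa_i-m_i$ that diverges for general $M$; the role of bounded torsion is precisely to split off the ``completion part'' $(x-U)\sum t_{i+1}U^i$ and to turn the residual torsion recursion into the finite sum $b_i$. By contrast, the manipulations in (a) and (b) are routine coefficient bookkeeping, the only subtlety being that the two inverse-limit descriptions in (b) coincide only after the index shift, which I would verify rather than assert.
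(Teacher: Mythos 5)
Your proof is correct and is essentially the paper's own argument: the same completion map on $H_0$ in (a), the same inverse-limit description of $\Hom_R(R_x,M)$ in (b), and in (c) the same two steps (the torsion bound kills $H_1$; a kernel element of the map to $\hat{M}^{xR}$ is split as $(x-U)a(U)$ plus a torsion tail with $i$-th coefficient in $0:_M x^i$, which bounded torsion then forces into $(x-U)M[[U]]$). The only deviations are refinements of detail, both to your credit: in (b) your explicit index shift $n_0=0$, $n_{i+1}=m_i$ is genuinely needed, since a kernel series only satisfies $xm_0=0$ rather than $m_0=0$, so the paper's unshifted identification of $\Ker\mu_{x-U}$ with $\Ker(\Hom_R(R_x,M)\to M)$ is imprecise as stated; and in (c) your closed-form preimage $b_i=-\sum_{l\geq 0}x^l\delta_{i+1+l}$ (finite by bounded torsion) is exactly the paper's iterated reduction of the torsion tail summed in a single step.
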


\begin{proof} 
	By the definition we get 
	\[
	K_{\bullet}(x-U;M[[U]]): \ldots \to 0 \to  M[[U]] \stackrel{x-U}{\longrightarrow} M[[U]] \to 0 \to \ldots
	\]
	such that ${\textstyle \sum}_{i \geq 0} m_iU^i \mapsto {\textstyle \sum}_{i \geq 0}(xm_i-m_{i-1})U^i$ with $m_{-1} =0$. 
	First we investigate the zero-th homology $H_0(x-U;M[[U]])$. Clearly 
	$$
	H_0 := H_0(x-U;M[[U]]) = M[[U]]/(x-U)M[[U]].
	$$ 
	We define a map $f : H_0 \to \hat{M}^{xR} = \varprojlim M/x^i M$, the $xR$-adic completion of $M$ by
	\[
	m(U) + (x-U) M[[U]] \mapsto (m_0+ xm_1 + \ldots + x^{i-1}m_{i-1} + x^iM)_{i \geq 0}
	\]
	where $m(U) = \sum_{i \geq 0} m_iU^i$. The map is well defined since $(x-U)m(U)$ maps to zero and surjective as easily seen.  That is there is a surjective homomorphism
	\[
	H_0(x-U;M[[U]]) = M[[U]]/(x-U)M[[U]] \twoheadrightarrow \hat{M}^{xR}
	\]
	which proves (a). 
	
	For the proof of (b) note that $\Hom_R(R_x,M) \cong \varprojlim \{M,x\}$ 
	with the inverse system $M_{i+1} \stackrel{x}{\longrightarrow} M_i$ with 
	$M_i = M$ for all $i \geq 0$. Whence $\Hom_R(R_x,M)$ is isomorphic 
	to the set sequences $\{(m_0,m_1,\ldots)\} \in M^{\mathbb{N}}$ such that 
	$m_i = xm_{i+1}$ for all $i \geq 0$. Therefore $(m_0,m_1,\ldots) \mapsto 0$ if and only if $m_0+m_1U + \ldots \in \Ker \mu_{x-U}$ which proves the isomorphism in (b).
	
	
	Now suppose that $M$ is of bounded $xR$-torsion. 
	Let $m(U) \in \Ker \mu_{x-U}$. Then $m_{i-1} = xm_i$ for all $i \geq 0$ and $m_{-1} = 0$ therefore 
	$m_i \in 0:_M x^{i+1}$ for all $i \geq 1$ and $m_i \in  \cap_{l \geq 1} x^l M$. 
	Since $M$ is of bounded $x$-torsion there 
	is an integer $k$ such that $0:_M x^k = 0:_M x^{k+j}$ for all $j \geq 0$. Let 
	$m_i = x^k m_{i+k}$ and therefore $x^{k+i+1} m_{i+k} = 0$. That is, $m_{i+k} \in 0:_M x^{i+k} = 0:_M x^k$ 
	and $m_i = 0$. Whence $H_1(x-U;M[[U]]) = 0$.

	Now let $m(U) \in \ker f$. Then there are $n_i \in M$ for all $i \geq 0$ such that 
	$$
	m_0 = -xn_0 \text{ and } m_0+xm_1+ \ldots + x^{i -1}m_{i -1} = -x^in_{i -1} \text{ for all }i \geq 1.
	$$ 
	Whence $x^im_i = x^i n_{i-1} -x^{i+1}n_i$ and so $m_i = n_{i-1} -xn_i +h_i$ with $h_i \in 0:_M x^i$ for all $i \geq 0$ with $n_{-1} = 0$ and $h_0 = 0$. 
	Now let $n(U) = \sum_{i \geq 0} n_iU^i, h(U) = \sum_{i \geq 1}  h_iU^i$ and therefore $m(U) = (x-U) n(U) + h(U)$. It will be enough to show that the class of $h(U)$ belongs to $\Ker f$. Since $xh_1 = 0$ we have that 
	$(x h(U))/U \in M[[U]]$ and therefore
	$$
	h(U) - (x-U) (h(U)/U) = {\textstyle \sum}_{i \geq 1} xh_{i+1}U^i \;\text{ with }\; x h_{i+1} \in 0:_M x^i.
	$$ 
	Whence it will be enough to show that $\sum_{i \geq 1} xh_{i+1}U^i$ maps to zero where $xh_{i+1} \in 0:_M x^i$. 
	Iterating this process it will be enough to show that the class  $\sum_{i \geq 1} x^kh_{k+i} U^i$ belongs to
	$ \ker f$, where $x^kh_{k+i} \in 0:_M x^i$. Since $M$ is of bounded 
	$x$-torsion $0:_M x^k = 
	0:_M x^{k+i}$ for some $k \in \mathbb{N}$ and all $i \geq 1$. That is, 
	the last formal power series is zero. 
	This proves $H_0(K_{\bullet}(x-U;M[[U]])) \cong \hat{M}^{xR}$. 
\end{proof} 

As a another result we summarize some results about the Koszul homology

\begin{lemma} \label{hoc-3}
	For an $R$-module $X$ and a system of elements $\xx = x_1,\ldots,x_r$ we have the isomorphisms
	\begin{itemize}
		\item[(a)] $H_i(\xx-\UU;X[[\UU]]) \cong \Tor ^{R[\UU]}_ i(R[\UU]/(\xx-\UU)R[\UU], X[[\UU]])$,
		\item[(b)] $H_i(\xx-\UU;X[[\UU]]) \cong \Ext_{R[\UU]}^{r-i}(R[\UU]/(\xx-\UU)R[\UU], X[[\UU]])$
	\end{itemize}
	for all $i \in \mathbb{Z}$.
\end{lemma}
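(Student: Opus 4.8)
The plan is to identify both right-hand sides as derived functors over the polynomial ring $S := R[\UU]$, using the Koszul complex $K_{\bullet}(\xx-\UU;S)$ as a free resolution. The whole argument rests on a single classical fact that I regard as the crux of the proof: the sequence $\xx-\UU = x_1-U_1,\ldots,x_r-U_r$ is a regular sequence in $S$, with $S/(\xx-\UU)S \cong R$.

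I would establish this by induction on $r$. Writing $R' = R[U_2,\ldots,U_r]$, the ring $S$ is the polynomial ring $R'[U_1]$, and $U_1 - x_1$ is a monic polynomial of degree one in $U_1$, hence a nonzerodivisor on $S$; moreover the substitution $U_1 \mapsto x_1$ yields an isomorphism $S/(x_1-U_1)S \cong R'$ carrying the residues of $x_2-U_2,\ldots,x_r-U_r$ to the corresponding elements of $R[U_2,\ldots,U_r]$. The inductive hypothesis then shows these form a regular sequence in $R'$, so $\xx-\UU$ is regular in $S$, and iterating the substitutions gives $S/(\xx-\UU)S \cong R$. Consequently $K_{\bullet}(\xx-\UU;S)$ is a bounded $S$-free resolution of $R = S/(\xx-\UU)S$.

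For part (a), recall from \ref{prel-3} that $K_{\bullet}(\xx-\UU;X[[\UU]]) \cong K_{\bullet}(\xx-\UU;S)\otimes_S X[[\UU]]$, where $X[[\UU]]$ carries its natural $S$-module structure (see \ref{prel-6}). Since $K_{\bullet}(\xx-\UU;S)$ resolves $S/(\xx-\UU)S$ by frees, the homology of this tensor product is by definition $\Tor^{S}_i(S/(\xx-\UU)S, X[[\UU]])$, which is exactly (a). For part (b), I would apply $\Hom_S(-,X[[\UU]])$ to the same free resolution: by \ref{prel-3} the resulting complex is the Koszul co-complex $K^{\bullet}(\xx-\UU;X[[\UU]])$, whose cohomology computes $\Ext^{j}_{S}(S/(\xx-\UU)S,X[[\UU]])$, i.e. $H^{j}(\xx-\UU;X[[\UU]])$. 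It then remains to invoke the self-duality of the Koszul complex of a sequence of length $r$, namely the isomorphism $H^{j}(\xx-\UU;M) \cong H_{r-j}(\xx-\UU;M)$ coming from $\Hom_S(\wedge^{\bullet}S^r,S)\cong \wedge^{\bullet}S^r$ up to the shift by $r$ (see \cite{BrH}); taking $j = r-i$ converts $\Ext^{r-i}$ into $H_i(\xx-\UU;X[[\UU]])$ and proves (b).

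The only delicate point is the regular-sequence claim together with keeping track of the $S$-module structures; once $\xx-\UU$ is known to be regular, parts (a) and (b) are the standard computations of $\Tor$ and $\Ext$ through a Koszul resolution, combined with Koszul self-duality.
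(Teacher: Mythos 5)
Your proposal is correct and follows essentially the same route as the paper's own proof: identify $K_{\bullet}(\xx-\UU;X[[\UU]])$ with $K_{\bullet}(\xx-\UU;R[\UU])\otimes_{R[\UU]}X[[\UU]]$, use that $\xx-\UU$ is an $R[\UU]$-regular sequence so the Koszul complex is a free resolution of $R[\UU]/(\xx-\UU)R[\UU]$, deduce (a) by the definition of $\Tor$, and obtain (b) from Koszul self-duality. The only difference is that you spell out the regularity of $\xx-\UU$ (via the monic-polynomial induction), which the paper simply asserts.
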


\begin{proof}
	First note that $K_{\bullet}(\xx-\UU;X[[\UU]]) \cong K_{\bullet}(\xx-\UU;R[\UU]) \otimes_{R[\UU]} X[[\UU]]$. Because $\xx-\UU$ 
	forms an $R[\UU]$-regular sequence the Koszul complex $K_{\bullet}(\xx-\UU;R[\UU])$ is a free $R[\UU]$-resolution of $R[\UU]/(\xx-\UU)R[\UU]$. Whence the claim in (a) follows. By the self-duality of the Koszul complex this implies also (b). 
\end{proof}

\begin{remark} \label{hoc-4}
	(A) Let $\UU = U_1,\ldots,U_r$ denote a family of variables over the commutative ring $R$. For an $R$-module $M$ it is clear that $M[[\UU]] = \varprojlim M[\UU]/\UU^{(n)}M[\UU]$. Note that the topology defined by the ideals $\UU^{(n)}R[\UU]$ 
	is equivalent to the $\UU R[\UU]$-adic topology. 
	This construction extents to $X[[\UU]] \cong \varprojlim X[\UU]/\UU^{(n)}X[\UU]$ for 
	an $R$-complex $X$. Note that $X[[\UU]]$ has also the structure of an $R[\UU]$-complex.
	\\
	(B) For a system of elements $\xx = x_1,\ldots,x_r$ of a commutative ring and an $R$-complex 
	$X$ we have that $\Hom_R(\mathcal{L}_{\xx},X) \cong K_{\bullet}(\xx-\UU; X[[\UU]])$. 
	With the previous investigations there is the following 
	isomorphism 
	\[
	K_{\bullet}(\xx-\UU; X[[\UU]]) \cong \varprojlim K_{\bullet}(\xx-\UU; X[\UU]/\UU^{(n)}X[\UU]).
	\] 
	(C) Because the  morphisms in the inverse system $\{K_{\bullet}(\xx-\UU; X[\UU]/\UU^{(n)}X[\UU])\}$ are degree-wise surjective there are the following short 
	exact sequences
	\begin{gather*}
		0 \to \varinjlim{}^1 H_{i+1}(\xx-\UU;X[\UU]/\UU^{(n)}X[\UU]) 
		\to H_i(\xx-\UU; X[[\UU]])  \\\to 
		\varinjlim H_i(\xx-\UU;X[\UU]/\UU^{(n)}X[\UU]) \to 0
	\end{gather*}
	for an $R$-complex $X$ (see \cite[Lemma 1.2.8]{SS}). 
	So there is some interest in the study of the homology of $K_{\bullet}(\xx-\UU; X[\UU]/\UU^{(n)}X[\UU])$ as done in the following sections.
\end{remark}

\section{Weakly pro-regular sequences}
\begin{z} \label{weak-1} {\it Left derived functors of completion.}
	Let $X$ denote an $R$-complex. Let $\xx = x_1,\ldots,x_r$ denote a weakly 
	pro-regular sequence and $\mathfrak{a} = \xx R$. Then it is known that 
	$\Hom_R(\check{L}_{\xx},X)$ is a representative of ${\rm L} \Lambda^{\mathfrak{a}}(X)$ in the 
	derived category (see \cite{GM} and \cite{ALL1}, \cite{Sp2}, \cite{SS} 
	for the generalization to complexes). We refer also to \cite{SS} for more detailed description 
	as well as some further comments. 
	
	Furthermore, let $\xx$ be weakly 
	pro-regular then $H_i(\Hom_R(\check{L}_{\xx},X)) \cong 
	\Lambda_i^{\mathfrak{a}}(X)$ for all $i \in \mathbb{Z}$. Here 
	$ \Lambda_i^{\mathfrak{a}}(X)$ denotes the $i$-th left derived functor 
	of the $\mathfrak{a}$-adic completion. It is 
	defined in the following way: Let $F \quism X$ where $F$ is a $K$-flat 
	resolution of $X$ in the sense of Spaltenstein (see \cite{Sn}). Then 
	$\Lambda^{\mathfrak{a}}(F) = \varprojlim (F \otimes_R R/\mathfrak{a}^n)$ 
	is a representative of ${\rm L} \Lambda^{\mathfrak{a}}(X)$ in the derived category. 
\end{z}

\begin{theorem} \label{weak-2}
	Let $\xx = x_1,\ldots,x_r$ denote a weakly pro-regular sequence 
	and $\mathfrak{a} = \xx R$. Then there are isomorphisms 
	\[
	\Lambda_i^{\mathfrak{a}}(X) \cong H_i(\xx-\UU;X[[\UU]]) 
	\]
	for all $i \in \mathbb{Z}$. More general, $K_{\bullet}(\xx-\UU;X[[\UU]])$ is a
	representative of ${\rm L} \Lambda^{\mathfrak{a}}(X)$ in the derived 
	category.
\end{theorem}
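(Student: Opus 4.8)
The plan is to chain the structural isomorphism of \ref{int-1} with the identification of $\Hom_R(\check{L}_{\xx},X)$ as a representative of the left derived completion functor recalled in \ref{weak-1}. First I would invoke \ref{int-1} (proved in \ref{hoc-1}) to rewrite the Koszul complex as an honest isomorphism of $R$-complexes $K_{\bullet}(\xx-\UU;X[[\UU]]) \cong \Hom_R(\mathcal{L}_{\xx},X)$. Thus it suffices to show that $\Hom_R(\mathcal{L}_{\xx},X)$ represents ${\rm L}\Lambda^{\mathfrak{a}}(X)$ and that its homology computes $\Lambda_i^{\mathfrak{a}}(X)$.

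The bridge between $\mathcal{L}_{\xx}$ and $\check{L}_{\xx}$ is the key point. By \ref{comp-6} the natural morphism $\check{L}_x \to \mathcal{L}_x$ exhibits $\check{L}_x$ as the direct sum of $\mathcal{L}_x$ with the complementary column $\mathcal{E} \colon 0 \to RU^{-1} \xrightarrow{\tau} R \to 0$, where $\tau = \id_R$, so that $\mathcal{E}$ is contractible. Applying the contravariant functor $\Hom_R(-,X)$ degree-wise leaves $\Hom_R(\mathcal{E},X)$ contractible, hence acyclic, so the induced map $\Hom_R(\mathcal{L}_x,X) \to \Hom_R(\check{L}_x,X)$ is a quasi-isomorphism. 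Iterating over the factors $x_1,\ldots,x_r$ by means of the recursive definition in \ref{comp-7} yields the quasi-isomorphism $\Hom_R(\mathcal{L}_{\xx},X) \simeq \Hom_R(\check{L}_{\xx},X)$ already announced at the start of Section 4. Equivalently, $\check{L}_{\xx} \to \mathcal{L}_{\xx}$ is a quasi-isomorphism of bounded complexes of free $R$-modules, hence a homotopy equivalence, and is therefore preserved by $\Hom_R(-,X)$.

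Finally I would feed in the weak pro-regularity hypothesis through \ref{weak-1}: for $\xx$ weakly pro-regular, $\Hom_R(\check{L}_{\xx},X)$ is a representative of ${\rm L}\Lambda^{\mathfrak{a}}(X)$ in the derived category and $H_i(\Hom_R(\check{L}_{\xx},X)) \cong \Lambda_i^{\mathfrak{a}}(X)$ for all $i \in \mathbb{Z}$. Concatenating the isomorphism of \ref{int-1} with the quasi-isomorphism of the previous paragraph transports both conclusions to $K_{\bullet}(\xx-\UU;X[[\UU]])$: it represents ${\rm L}\Lambda^{\mathfrak{a}}(X)$, and passing to homology gives $H_i(\xx-\UU;X[[\UU]]) \cong H_i(\Hom_R(\mathcal{L}_{\xx},X)) \cong \Lambda_i^{\mathfrak{a}}(X)$ for all $i$.

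I expect the middle step to be the only delicate one, namely that $\Hom_R(-,X)$ carries the quasi-isomorphism $\check{L}_{\xx} \to \mathcal{L}_{\xx}$ to a quasi-isomorphism. A quasi-isomorphism between arbitrary (even flat) complexes need not survive $\Hom_R(-,X)$, so boundedness and freeness are genuinely used here; the explicit contractible complement $\mathcal{E}$ furnished by \ref{comp-6} makes the conclusion immediate and sidesteps any appeal to unbounded homological algebra.
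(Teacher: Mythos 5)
Your proposal is correct and takes essentially the same route as the paper's own proof: cite \ref{weak-1} to identify $\Hom_R(\check{L}_{\xx},X)$ as a representative of ${\rm L}\Lambda^{\mathfrak{a}}(X)$, transport along the quasi-isomorphism $\check{L}_{\xx} \to \mathcal{L}_{\xx}$ of bounded complexes of free $R$-modules, and conclude with the isomorphism $\Hom_R(\mathcal{L}_{\xx},X) \cong K_{\bullet}(\xx-\UU;X[[\UU]])$ from \ref{hoc-1} (B). The only difference is that you justify in detail why $\Hom_R(-,X)$ preserves this quasi-isomorphism (via the contractible complement $\mathcal{E}$ of \ref{comp-6}, or equivalently via homotopy equivalence of bounded free complexes), a point the paper asserts without elaboration.
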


\begin{proof}
	If $\xx$ is a weakly pro-regular sequence, then  
	$\Hom_R(\check{L}_{\xx},X)$ is a representative of 
	${\rm L} \Lambda^{\mathfrak{a}}(X)$ in the derived category. But 
	there is a quasi-isomorphism $\check{L}_{\xx} \quism \mathcal{L}_{\xx}$ 
	of bounded complexes of free $R$-modules. It induces 
	a quasi-isomorphism $\Hom_R(\check{L}_{\xx},X) \quism \Hom_R(\mathcal{L}_{\xx},X)$. By view of \ref{hoc-1} (B) this proves the statements.
\end{proof}

The previous result illustrates the importance of weakly pro-regular sequences. We 
shall continue with their study and a generalization of \cite[Lemma 2.4]{Sp2} to the case of 
$R$-modules.

\begin{proposition} \label{weak-3}
	Let $M$ be an $R$-module. Let $\xx = x_1,\ldots,x_r$ denote a system 
	of elements of $R$. Then the following conditions are equivalent:
	\begin{itemize}
		\item[(i)] $\xx$ is $M$-weakly pro-regular.
		\item[(ii)] $\{H_i(x^{(n)};M \otimes_R F)\}$ is pro-zero for all $i > 0$ 
		and any flat $R$-module $F$.
		\item[(iii)] $\varinjlim H^i(\xx^{(n)};\Hom_R(M,I)) = 0$ for all $i > 0$ 
		and any injective $R$-module $I$.
		\item[(iv)] $H^i(\check{C}_{\xx} \otimes_R \Hom_R(M,I)) = 0$ for all 
		$i > 0$ and any injective $R$-module $I$. 
	\end{itemize}
\end{proposition}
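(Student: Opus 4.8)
The plan is to prove the equivalence of (i)–(iv) by establishing a cycle of implications, exploiting the duality between flat and injective modules via Matlis/Hom-duality and the interplay between Koszul homology and cohomology. The conditions naturally pair off: (i)$\Leftrightarrow$(ii) concerns flat modules and Koszul \emph{homology}, while (iii)$\Leftrightarrow$(iv) concerns injective modules and Koszul \emph{cohomology}, with the bridge (ii)$\Leftrightarrow$(iii) provided by the adjunction $\Hom_R(M\otimes_R F, I)\cong\Hom_R(M,\Hom_R(F,I))$ together with the self-duality of the Koszul complex.

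\textbf{First} I would address (i)$\Leftrightarrow$(ii). The implication (ii)$\Rightarrow$(i) is immediate by taking $F=R$. For (i)$\Rightarrow$(ii), the key observation is that Koszul homology commutes with flat base change: since $K_{\bullet}(\xx^{(n)};M\otimes_R F)\cong K_{\bullet}(\xx^{(n)};M)\otimes_R F$ and $F$ is flat, one has $H_i(\xx^{(n)};M\otimes_R F)\cong H_i(\xx^{(n)};M)\otimes_R F$. If the transition map $H_i(\xx^{(m)};M)\to H_i(\xx^{(n)};M)$ is zero for suitable $m\geq n$, then tensoring with $F$ keeps it zero, so pro-zeroness is preserved. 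This step is routine.

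\textbf{Next} comes the passage to injectives, (ii)$\Leftrightarrow$(iii). Here I would use the natural isomorphism $H^i(\xx^{(n)};\Hom_R(M,I))\cong\Hom_R\!\bigl(H_i(\xx^{(n)};M),I\bigr)$, valid because $I$ is injective so $\Hom_R(-,I)$ is exact and converts Koszul homology of the (finite free) complex $K_{\bullet}(\xx^{(n)};M)$ into Koszul cohomology of its dual $K^{\bullet}(\xx^{(n)};\Hom_R(M,I))$. Since $\Hom_R(-,I)$ is faithfully exact on the relevant category when $I$ is a suitable cogenerator, a transition map $H_i(\xx^{(m)};M)\to H_i(\xx^{(n)};M)$ is zero if and only if its $\Hom_R(-,I)$-dual is zero; applying $\varinjlim$ (which is exact) then converts the pro-zero condition on homology into the vanishing $\varinjlim H^i(\xx^{(n)};\Hom_R(M,I))=0$. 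To recover the flat module $F$ in (ii), I would write $\Hom_R(F,I)$ as the relevant injective and invoke the adjunction above; the point is that \emph{every} injective arising as $\Hom_R(M,I)$-coefficient is accounted for, and conversely $\Hom_R(F,I)$ ranges over enough injectives.

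\textbf{Finally}, (iii)$\Leftrightarrow$(iv) follows from the isomorphism $\varinjlim K^{\bullet}(\xx^{(n)};Y)\cong\check{C}_{\xx}\otimes_R Y$ established in \ref{comp-1}(C), applied to $Y=\Hom_R(M,I)$. Since direct limits are exact, $\varinjlim H^i(\xx^{(n)};\Hom_R(M,I))\cong H^i(\varinjlim K^{\bullet}(\xx^{(n)};\Hom_R(M,I)))\cong H^i(\check{C}_{\xx}\otimes_R\Hom_R(M,I))$, which is exactly the content of (iv). \textbf{The main obstacle} I anticipate is the bookkeeping in (ii)$\Leftrightarrow$(iii): one must verify that testing against all injectives of the form $\Hom_R(M,I)$ (equivalently, all $\Hom_R(F,I)$) is genuinely equivalent to the pro-zero condition tensored against all flat $F$, which hinges on choosing $I$ to be a cogenerator so that $\Hom_R(-,I)$ reflects zero maps, and on the compatibility of these dualities with the transition maps of the inverse/direct systems. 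Everything else is a formal consequence of the exactness properties of $\varinjlim$, flatness, and injective duality.
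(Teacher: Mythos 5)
Your outline matches the paper on three of the four edges: (i)$\Leftrightarrow$(ii) by flat base change of Koszul homology, (iii)$\Leftrightarrow$(iv) via $\varinjlim K^{\bullet}(\xx^{(n)};-)\cong \check{C}_{\xx}\otimes_R-$, and the implication from pro-zero to $\varinjlim H^i(\xx^{(n)};\Hom_R(M,I))=0$, using $H^i(\xx^{(n)};\Hom_R(M,I))\cong \Hom_R(H_i(\xx^{(n)};M),I)$ for injective $I$. The genuine gap is in the converse direction, i.e. (iii)$\Rightarrow$(i) (equivalently your (iii)$\Rightarrow$(ii)), and it is exactly the point you flag as ``bookkeeping'' but never resolve. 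Your mechanism is: $\Hom_R(-,I)$ reflects zero maps when $I$ is a cogenerator, hence pro-zero of $\{H_i(\xx^{(n)};M)\}$ is equivalent to the dual transition maps being eventually zero, which you then identify with the vanishing of the colimit. The first equivalence is fine, but the second identification fails in the needed direction: $\varinjlim_m \Hom_R(H_i(\xx^{(m)};M),I)=0$ says only that each individual $\phi\in\Hom_R(H_i(\xx^{(n)};M),I)$ is killed by composition with some transition map $H_i(\xx^{(m)};M)\to H_i(\xx^{(n)};M)$, where $m$ is allowed to depend on $\phi$; it does not say that a single $m$ kills the entire dual map $\Hom_R(H_i(\xx^{(n)};M),I)\to\Hom_R(H_i(\xx^{(m)};M),I)$. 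Since these Hom-modules are in general infinite, this non-uniformity cannot be waved away, and faithfulness of $\Hom_R(-,I)$ gives you nothing until you already know some entire dual map vanishes.

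The paper closes this gap with a small but essential trick that is absent from your proposal: fix $n$ and $i>0$, and choose an injection $f: H_i(\xx^{(n)};M)\hookrightarrow I$ into an injective module $I$ (this is legitimate because (iii) is assumed for \emph{all} injectives, so $I$ may depend on $n$ and $i$). Under the isomorphism $\Hom_R(H_i(\xx^{(n)};M),I)\cong H^i(\xx^{(n)};\Hom_R(M,I))$, the single element $f$ must die in the colimit, so there is $m\ge n$ with $f\circ\bigl(H_i(\xx^{(m)};M)\to H_i(\xx^{(n)};M)\bigr)=0$; since $f$ is injective, the transition map itself is zero. The uniformity problem disappears because one tests against one well-chosen element rather than against the whole dual functor. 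The same issue recurs in your adjunction bridge (ii)$\Leftrightarrow$(iii): applying (iii) to the injective $\Hom_R(F,I)$ and using $\Hom_R(M,\Hom_R(F,I))\cong\Hom_R(M\otimes_RF,I)$ is fine, but you still land on a colimit-vanishing statement and must extract pro-zeroness from it, which requires the same embedding argument. If you insert that argument at the point where you invoke the cogenerator property, your proof becomes correct and is then essentially the paper's (the paper routes the cycle through (i)$\Rightarrow$(iii) and (iii)$\Rightarrow$(i) rather than your (ii)$\Leftrightarrow$(iii), but the content is identical).
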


\begin{proof} 
	The equivalence (i) $\Leftrightarrow$ (ii) is clear since   
	$H_i(\xx^{(n)};M) \otimes_R F \cong H_i(\xx^{(n)} ; M\otimes_RF)$
	for all $i$ when $F$ is a flat $R$-module. 
	The equivalence (iii) $\Leftrightarrow$ (iv) follows since 
	$\varinjlim H^i(\xx^{(n)};\Hom_R(M,I)) =H^i(\Cech \otimes_R \Hom_R(M,I))$ 
	because $\varinjlim K^{\bullet}(\xx^{(n)};\Hom_R(M,I)) \cong \check{C}_{\xx} 
	\otimes_R\Hom_R(M,I)$.  
	
	Now let us prove (i) $\Rightarrow$ (iii). Since $I$ is an injective $R$-module 
	\[ 
	H^i(\Hom_R(K_{\bullet}(\xx^{(n)};M), I)) \cong \Hom_R(H_i(\xx^{(n)};M), I)
	\] 
	for all $i$. Therefore, and because of $H^i(\Hom_R(K_{\bullet}(\xx^{(n)};M), I)) \cong H^i(\xx^{(n)}; \Hom_R(M,I))$, 
	\[
	\varinjlim H^i(\xx^{(n)}; \Hom_R(M,I)) \cong \varinjlim \Hom_R (H_i(\xx^{(n)};M), I).
	\] 
	By the assumption the inverse system $\{H_i(\xx^{(n)};M)\}$ is pro-zero for $i \not= 0.$ 
	Whence the direct limit $\varinjlim H^i(\xx^{(n)}; \Hom_R(M,I))$ vanishes, as required. 
	
	In order to complete the proof we have to show that (iii) $\Rightarrow$ (i). 
	Let $f : H_i(\xx^{(n)};M) \hookrightarrow I$ denote an injection into an 
	injective $R$-module $I.$ Then 
	\[ 
	f \in \Hom_R(H_i(\xx^{(n)};M), I) \cong H^i(\xx^{(n)}; \Hom_R(M,I))
	\]
	since $I$ is an injective $R$-module. Because of the assumption we have the 
	vanishing 
	$$
	\varinjlim H^i(\xx^{(n)}; \Hom_R(M,I)) = 0.
	$$ 
	So there must be an integer $m \geq n$ such 
	that the image of $f$ in $H^i(\xx^{(m)}; \Hom_R(M,I))$ has to be zero. In other words, the composite of the maps 
	\[
	H_i(\xx^{(m)};M) \to H_i(\xx^{(n)};M) \stackrel{f}\hookrightarrow I 
	\]
	is zero. Since $f$ is an injection it follows that the first map has to 
	be zero and $\{H_i(\xx^{(n)};M)\}$ is pro-zero for all $i > 0$.
\end{proof}

Note that the case of $M = R$, that is, then $\xx$ is weakly pro-regular. It has 
the consequence that $H^i(\check{C}_{\xx} \otimes_R I) = 0$ for all 
$i > 0$. This is part of the following corollary.

\begin{corollary} \label{weak-4}
	Let $\xx$ denote an $M$-weakly pro-regular sequence and $\mathfrak{a} = 
	\xx R$. Then the natural map 
	\[
	\Gamma_{\mathfrak{a}}(\Hom_R(M,I)) \to \check{C}_{\xx} \otimes_R \Hom_R(M,I)
	\]
	is a quasi-isomorphism for any injective $R$-module $I$. 
\end{corollary}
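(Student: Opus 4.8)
The plan is to set $N = \Hom_R(M,I)$ and to exploit the vanishing of the higher \v{C}ech cohomology of $N$ supplied by Proposition \ref{weak-3}. Since $\xx$ is $M$-weakly pro-regular and $I$ is injective, the equivalence (i) $\Leftrightarrow$ (iv) of \ref{weak-3} gives $H^i(\check{C}_{\xx} \otimes_R N) = 0$ for all $i > 0$. Hence the complex $\check{C}_{\xx} \otimes_R N$ has cohomology concentrated in degree zero, and it suffices to identify its zeroth cohomology together with the degree-zero comparison map.

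Next I would compute $H^0(\check{C}_{\xx} \otimes_R N)$. Using the explicit shape of the \v{C}ech complex recalled in \ref{comp-2}, the degree-zero differential is the map $N \to \oplus_{i=1}^r N_{x_i}$, so $H^0$ is the submodule of those $n \in N$ whose image in each localization $N_{x_i}$ vanishes, i.e. those $n$ killed by some power of each $x_i$. Passing to a common exponent $k$ shows this submodule equals the $\mathfrak{a}$-torsion $\Gamma_{\mathfrak{a}}(N)$: if $x_i^k n = 0$ for all $i$ then a common power of $\mathfrak{a}$ annihilates $n$ (by a pigeonhole count on monomials), and conversely $\mathfrak{a}^s n = 0$ forces $x_i^s n = 0$ for each $i$. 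Thus $H^0(\check{C}_{\xx} \otimes_R N) \cong \Gamma_{\mathfrak{a}}(N)$.

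It then remains to check that the asserted natural map realizes this identification. Viewing $\Gamma_{\mathfrak{a}}(N)$ as a complex concentrated in degree zero, the natural map $\Gamma_{\mathfrak{a}}(N) \to \check{C}_{\xx} \otimes_R N$ is the canonical inclusion $\Gamma_{\mathfrak{a}}(N) \hookrightarrow N$ into the degree-zero term; this is indeed a morphism of complexes precisely because every $\mathfrak{a}$-torsion element maps to zero in each $N_{x_i}$, so its composite with the first differential vanishes. On $H^0$ it induces the identity of $\Gamma_{\mathfrak{a}}(N)$ by the computation above, while on $H^i$ for $i > 0$ both source and target vanish. Therefore the map induces isomorphisms on all cohomology and is a quasi-isomorphism.

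I expect the only delicate point to be the bookkeeping identifying $H^0$ of the \v{C}ech complex with $\Gamma_{\mathfrak{a}}(N)$ and confirming that the stated natural inclusion realizes this identification on the level of complexes; the vanishing of the higher cohomology, which is the genuine content, is already furnished by Proposition \ref{weak-3}.
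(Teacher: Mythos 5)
Your proof is correct and is essentially the paper's own argument: the paper likewise invokes Proposition \ref{weak-3} (the equivalence of (i) and (iv)) to kill the higher cohomology of $\check{C}_{\xx} \otimes_R \Hom_R(M,I)$ and identifies $H^0$ with $\Gamma_{\mathfrak{a}}(\Hom_R(M,I))$, a step it labels ``clearly'' and which you have merely written out in detail.
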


\begin{proof}
	Clearly $H^0(\check{C}_{\xx} \otimes_R \Hom_R(M,I)) \cong \Gamma_{\mathfrak{a}}(\Hom_R(M,I))$ and $\check{C}_{\xx} \otimes_R \Hom_R(M,I)$ is a left resolution of $H^0(\check{C}_{\xx} \otimes_R \Hom_R(M,I))$ as follows by view of \ref{weak-3}. 
\end{proof}

Note that the in case of $M = R$ and an injective $R$-module $I$ it follows that 
$\Gamma_{\mathfrak{a}}(I) \cong \check{C}_{\xx}(I)$.  This implies that $H^i_{\mathfrak{a}}(X) \cong H^i(\check{C}_{\xx} \otimes_RX)$ for $\mathfrak{a} = \xx R$ and an $R$-complex $X$ (see e.g. \cite[Section 7.4]{SS}). This does 
not hold in general (see e.g. \cite[Example A.2.4]{SS}).

Let $\xx = x_1,\ldots,x_r$ a system of elements of $R$ and let $\UU = U_1,\ldots, U_r$ 
be a set of variables over $R$. Next we will come back to the study of the Koszul 
complexes $K_{\bullet}(\xx-\UU; X[\UU]/\UU^{(n)}X[\UU])$ for an $R$-complex $X$ and an integer $n \in \mathbb{N}_+$. 

\begin{theorem} \label{weak-5}
	Let $\xx = x_1,\ldots,x_r$ denote a system of elements of the commutative ring $R$. Let 
	$\UU = U_1,\ldots,U_r$ be a set of variables over $R$. For an $R$-complex $X$ 
	and an integer $n \geq 1$ there is a quasi-isomorphism of $R$-complexes 
	\[
	K_{\bullet}(\xx-\UU;X[\UU]/\UU^{(n)}X[\UU]) \simeq K_{\bullet}(\xx^{(n)};X)
	\]
	such that for positive integers $m \geq n$ there is a commutative diagram 
	\[
	\xymatrix{
	H_i(\xx-\UU;X[\UU]/\UU^{(m)}X[\UU]) \ar[d] \ar[r]^-{\cong} & H_i(\xx^{(m)};X) \ar[d]\\
	H_i(\xx-\UU;X[\UU]/\UU^{(n)}X[\UU])  \ar[r]^-{\cong}& H_i(\xx^{(n)};X), 
	}
	\]
	for all $i \in \mathbb{Z}$, where the vertical homomorphisms are induced by the natural ones. 
\end{theorem}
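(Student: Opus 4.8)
The plan is to settle the case $r=1$ by a change-of-rings computation that produces an explicit zig-zag of quasi-isomorphisms, and then to pass to general $r$ by induction on the number of variables, performing the inductive step in the last variable; throughout I write $C(\cdot)$ for the mapping cone, so that $K_{\bullet}(y;-)=C(\mu(y;-))$ as in \ref{prel-2}. For a single element $x$ and one variable $U$ I would start from the short exact sequence of $R[U]$-complexes
\[
0 \to X[U] \xrightarrow{U^n} X[U] \to X[U]/U^nX[U] \to 0,
\]
in which multiplication by $U^n$ is injective in each homological degree because each $X_i[U]$ is $U$-torsion free. Applying the exact functor $K_{\bullet}(x-U;-) = -\otimes_{R[U]}K_{\bullet}(x-U;R[U])$ (tensoring with a two-term complex of free $R[U]$-modules is degree-wise exact) yields a short exact sequence of complexes whose two outer terms are $P := K_{\bullet}(x-U;X[U])$ and whose quotient is $K_{\bullet}(x-U;X[U]/U^nX[U])$; since $\mu(U^n;P)$ is degree-wise injective, this quotient is quasi-isomorphic to $C(\mu(U^n;P))$.

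The key observation is that $x-U$ is injective on $X[U]$ — compare top coefficients — with cokernel $X[U]/(x-U)X[U]\cong X$ via $U\mapsto x$, so that $P=C(\mu(x-U;X[U]))$ carries a natural augmentation quasi-isomorphism $\varepsilon : P \qism X$ under which multiplication by $U$ corresponds to multiplication by $x$; hence $\varepsilon\circ\mu(U^n;P)=\mu(x^n;X)\circ\varepsilon$. Passing to cones, $\varepsilon$ induces a quasi-isomorphism $C(\mu(U^n;P)) \qism C(\mu(x^n;X)) = K_{\bullet}(x^n;X)$, and combining the two steps gives the desired zig-zag for $r=1$,
\[
K_{\bullet}(x-U; X[U]/U^nX[U]) \lqism C(\mu(U^n;P)) \qism K_{\bullet}(x^n;X).
\]

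For general $r$ write $\xx=\xx',x_r$ and $\UU=\UU',U_r$, and set $Z := K_{\bullet}(\xx'-\UU'; X[\UU']/{\UU'}^{(n)}X[\UU'])$. Using $X[\UU]/\UU^{(n)}X[\UU]\cong (X[\UU']/{\UU'}^{(n)}X[\UU'])\otimes_R R[U_r]/(U_r^n)$ and the fact that $K_{\bullet}(\xx'-\UU';-)$ commutes with $-\otimes_R R[U_r]/(U_r^n)$ (the entries of $\xx'-\UU'$ do not involve $U_r$), I obtain $K_{\bullet}(\xx'-\UU'; X[\UU]/\UU^{(n)}X[\UU])\cong Z[U_r]/U_r^nZ[U_r]$, and hence, performing the last Koszul factor, $K_{\bullet}(\xx-\UU; X[\UU]/\UU^{(n)}X[\UU])\cong K_{\bullet}(x_r-U_r; Z[U_r]/U_r^nZ[U_r])$. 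Applying the $r=1$ case to the $R$-complex $Z$ gives $K_{\bullet}(x_r-U_r; Z[U_r]/U_r^nZ[U_r])\simeq K_{\bullet}(x_r^n;Z)$, and the induction hypothesis $Z\simeq K_{\bullet}({\xx'}^{(n)};X)$ together with the fact that $K_{\bullet}(x_r^n;-)=C(\mu(x_r^n;-))$ preserves quasi-isomorphisms yields $K_{\bullet}(x_r^n;Z)\simeq K_{\bullet}(x_r^n; K_{\bullet}({\xx'}^{(n)};X)) = K_{\bullet}(\xx^{(n)};X)$.

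The main obstacle is to make these zig-zags natural in $n$, which is exactly what produces the commutative square. For $r=1$ and $m\geq n$ the pair $(\mu(U^{m-n};P),\id_P)$ relates the two short exact sequences above and induces $C(\mu(U^m;P))\to C(\mu(U^n;P))$; chasing this map to the quotients gives precisely the natural surjection $X[U]/U^mX[U]\to X[U]/U^nX[U]$, while chasing it through $\varepsilon$ gives the pair $(\mu(x^{m-n};X),\id_X)$, i.e. the transition map $K_{\bullet}(x^m;X)\to K_{\bullet}(x^n;X)$ of \ref{prel-2}(B), precisely because $\varepsilon\circ\mu(U^{m-n};P)=\mu(x^{m-n};X)\circ\varepsilon$. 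Thus both squares of the zig-zag commute for $r=1$. I would then propagate this compatibility through the induction, checking that the identification with $Z[U_r]/U_r^nZ[U_r]$ and the functor $C(\mu(x_r^n;-))$ are natural in $n$, so that the composite intertwines the left transition maps induced by $X[\UU]/\UU^{(m)}X[\UU]\to X[\UU]/\UU^{(n)}X[\UU]$ with the right transition maps $K_{\bullet}(\xx^{(m)};X)\to K_{\bullet}(\xx^{(n)};X)$. Passing to homology then gives the asserted diagram; this last bookkeeping is the most delicate part of the argument.
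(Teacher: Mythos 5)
Your proposal is correct, and it reaches the statement by a genuinely different route than the paper. The paper proves the result for all $r$ at once: the identity $x_i^n=(x_i-U_i)f_n(x_i,U_i)+U_i^n$ with $f_n(T,U)=(T^n-U^n)/(T-U)$ produces an invertible $2r\times 2r$ matrix $\mathfrak{A}$ over $R[\UU]$ with $(\xx-\UU,\xx^{(n)})=(\xx-\UU,\UU^{(n)})\cdot\mathfrak{A}$, hence by \ref{prel-3} an isomorphism $K_{\bullet}(\xx-\UU,\UU^{(n)};X[\UU])\cong K_{\bullet}(\xx-\UU,\xx^{(n)};X[\UU])$ compatible with the transition maps; the two ends of the zig-zag are then obtained by collapsing $K_{\bullet}(\UU^{(n)};X[\UU])$ onto $X[\UU]/\UU^{(n)}X[\UU]$ and $K_{\bullet}(\xx-\UU;X[\UU])$ onto $X$ via iterated use of \ref{prel-7}(a), after tensoring with the complementary free Koszul factor. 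You instead settle $r=1$ directly and induct on $r$. Your one-variable bridge, the intertwining $\varepsilon\circ\mu(U^n;P)=\mu(x^n;X)\circ\varepsilon$ through the augmentation $\varepsilon\colon P=K_{\bullet}(x-U;X[U])\qism X$, encodes exactly the same algebraic fact as the paper's matrix (namely $U^n\equiv x^n \bmod (x-U)$), and your intermediate complex $C(\mu(U^n;P))$ is literally the paper's $K_{\bullet}(x-U,U^n;X[U])$ when $r=1$; but you never invoke the change-of-generators lemma \ref{prel-3}(B), which makes the one-variable case more elementary and the appearance of $x^n$ transparent. What the induction costs you is the naturality bookkeeping you flag at the end: your transition from level $m$ to level $n$ changes both the exponent and the coefficient complex $Z_m\to Z_n$, so besides the naturality in $n$ that you verified for $r=1$ you also need naturality of that zig-zag in the coefficient complex (clear, since $P$, $\varepsilon$ and the cone are functorial) and the remark that the combined transition factors through these two changes in either order; this is routine but must be said. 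The paper's arrangement avoids induction, so compatibility with the transition maps is checked once for a single three-term zig-zag (though its matrix $\mathfrak{A}$ also depends on $n$, so an analogous verification is implicit there as well), and the same matrix argument is reused, dually, in the proof of Lemma \ref{coh-2} for Koszul cohomology.
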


\begin{proof}
	Let $f_n(T,U) = (T^n-U^n)/(T-U) = T^{n-1} +T^{n-2} U + \ldots + U^{n-1} \in R[T,U]$ for $n \geq 1$. Then 
	\[
	x_i^n = (x_i-U_i)f_n(x_i,U_i) +U_i^n \mbox{ for all } n \geq 1 \mbox{ and } i = 1,\ldots,r.
	\]
	Therefore there is an invertible $2r \times 2r$-matrix $\mathfrak{A}$ such that 
	$(\xx-\UU,\xx^{(n)}) = (\xx-\UU,\UU^{(n)}) \cdot \mathfrak{A}$. Let $m \geq n$ be two positive integers. By \ref{prel-3} there is a commutative diagram 
	\[
	\xymatrix{
	K_{\bullet}(\xx-\UU,\UU^{(m)};X[\UU])  \ar[d] \ar[r]^-{\cong}& 
	K_{\bullet}(\xx-\UU,\xx^{(m)};X[\UU]) \ar[d]\\
	K_{\bullet}(\xx-\UU,\UU^{(n)};X[\UU])  \ar[r]^-{\cong} & 
	K_{\bullet}(\xx-\UU,\xx^{(n)};X[\UU]), 
	}
	\]
	where the vertical morphisms are the natural ones. Now there are isomorphisms 
	\[
	K_{\bullet}(\xx-\UU,\UU^{(n)};X[\UU]) \cong K_{\bullet}(\xx-\UU;R[\UU])
	\otimes_{R[\UU]} K_{\bullet}(\UU^{(n)};X[\UU]) 
	\]
	for all $n \geq 1$.
	By an iterative use of \ref{prel-7} (a) there is a quasi-isomorphism 
	$K_{\bullet}(\UU^{(n)};X[\UU]) \to X[\UU]/\UU^{(n)}X[\UU]$. Since $K_{\bullet}(\xx-\UU;R[\UU])$ is a complex of free $R[\UU]$-modules it induces a commutative diagram 
	\[
	\xymatrix{
		K_{\bullet}(\xx-\UU,\UU^{(m)};X[\UU])  \ar[d] \ar[r]& K_{\bullet}(\xx-\UU;
		X[\UU]/\UU^{(m)}X[\UU]) \ar[d] \\
		K_{\bullet}(\xx-\UU,\UU^{(n)};X[\UU])  \ar[r]& K_{\bullet}(\xx-\UU;
		X[\UU]/\UU^{(m)}X[\UU]) 
	}
	\]
	where the horizontal morphisms are quasi-isomorphisms and the vertical maps are the natural ones. 
	
	Next recall that $X[\UU]/(\xx-\UU)X[\UU] \cong X$ for an $R$-complex $X$ as follows by 
	degree-wise inspection. By a similar argument as before we get a commutative 
	diagram 
	\[
	\xymatrix{
		K_{\bullet}(\xx-\UU,\xx^{(m)};X[\UU]) \ar[r] \ar[d] & K_{\bullet}(\xx^{(m)};X) \ar[d]\\
		K_{\bullet}(\xx-\UU,\xx^{(n)};X[\UU]) \ar[r]  & K_{\bullet}(\xx^{(n)};X)
	}
	\]
	where the horizontal morphisms are quasi-isomorphisms and the vertical maps are 
	the natural ones. Putting together the isomorphism with the quasi-isomorphisms 
	this proves the claims.
\end{proof}

As an application we get the following corollary for the computation of the 
homology of $K_{\bullet}(\xx-\UU;X[[\UU]])$. Note that the quasi-isomorphisms in \ref{weak-5} does not imply a quasi-isomorphism for the inverse limit.

\begin{corollary} \label{weak-6}
	With the previous notation there is a quasi-isomorphism 
	\[
	K_{\bullet}(\xx-\UU;X[[\UU]]) \simeq M(\xx;X).
	\]
	and there are short exact sequences
	\[
	0 \to \varprojlim{}^1 H_{i+1}(\xx^{(n)};X) \to H_i(\xx-\UU;X[[\UU]]) \to 
	\varprojlim H_i(\xx^{(n)};X) \to 0
	\] 
	for all $i \in \mathbb{Z}$.
\end{corollary}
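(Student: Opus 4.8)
The plan is to identify $K_{\bullet}(\xx-\UU;X[[\UU]])$ with the microscope $M(\xx;X)=\Mic(\{K_{\bullet}(\xx^{(n)};X)\})$, exploiting that the microscope is the correct homotopy-theoretic model for an inverse limit, so that the short exact sequences fall out immediately from \ref{prel-4} (C). The comparison between the two systems of complexes will be supplied by \ref{weak-5}. The essential point to keep in mind throughout — flagged by the remark just before the statement — is that one is \emph{not} allowed to apply $\varprojlim$ directly to the level-wise quasi-isomorphisms of \ref{weak-5}, and the microscope is exactly the device that repairs this.

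First I would recall from \ref{hoc-4} (B) the isomorphism
\[
K_{\bullet}(\xx-\UU;X[[\UU]]) \cong \varprojlim K_{\bullet}(\xx-\UU;X[\UU]/\UU^{(n)}X[\UU]).
\]
Since the transition maps of this inverse system are degree-wise surjective (as noted in \ref{hoc-4} (C)), the system satisfies the Mittag-Leffler condition degree-wise, so by \ref{prel-4} (B) the natural morphism from the inverse limit to the microscope
\[
K_{\bullet}(\xx-\UU;X[[\UU]]) \to \Mic(\{K_{\bullet}(\xx-\UU;X[\UU]/\UU^{(n)}X[\UU])\})
\]
is a quasi-isomorphism. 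Next I would transport this microscope to $M(\xx;X)$ by means of \ref{weak-5}: its proof realizes the comparison of $K_{\bullet}(\xx-\UU;X[\UU]/\UU^{(n)}X[\UU])$ with $K_{\bullet}(\xx^{(n)};X)$ as a zig-zag of morphisms of inverse systems of complexes, each arrow of which is a level-wise quasi-isomorphism compatible with the transition maps. Applying the microscope and invoking the last assertion of \ref{prel-4} (C) — a level-wise quasi-isomorphism of inverse systems induces a quasi-isomorphism of microscopes — along each arrow of the zig-zag yields
\[
\Mic(\{K_{\bullet}(\xx-\UU;X[\UU]/\UU^{(n)}X[\UU])\}) \simeq \Mic(\{K_{\bullet}(\xx^{(n)};X)\}) = M(\xx;X).
\]
Concatenating the two displays gives the asserted quasi-isomorphism $K_{\bullet}(\xx-\UU;X[[\UU]]) \simeq M(\xx;X)$.

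The short exact sequences then come directly from \ref{prel-4} (C) applied to the inverse system $\{K_{\bullet}(\xx^{(n)};X)\}$, whose homology is the Koszul homology $H_i(\xx^{(n)};X)$; transporting $H_i(M(\xx;X))$ back to $H_i(\xx-\UU;X[[\UU]])$ through the quasi-isomorphism just established produces the stated sequences for all $i$. (Alternatively one could obtain the same sequences by substituting the isomorphisms $H_i(\xx-\UU;X[\UU]/\UU^{(n)}X[\UU]) \cong H_i(\xx^{(n)};X)$ of \ref{weak-5} into the $\varprojlim$-sequence of \ref{hoc-4} (C), but the microscope route delivers both claims at once.)

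The step I expect to be the main obstacle — indeed the whole content of the corollary — is precisely the passage from the level-wise quasi-isomorphisms to a global one. The inverse limit is not exact and destroys quasi-isomorphisms of non-Mittag-Leffler systems, and the target system $\{K_{\bullet}(\xx^{(n)};X)\}$ has transition maps given by multiplication by the diagonal matrix with entries $x_i^{m-n}$ (see \ref{prel-3} (C)), hence is \emph{not} degree-wise surjective and not covered by the easy $\varprojlim$-argument. Routing through the microscope, which is insensitive to level-wise quasi-isomorphisms, is exactly what circumvents this, so the only real care required is to verify that the comparisons extracted from the proof of \ref{weak-5} are genuine morphisms of inverse systems, allowing \ref{prel-4} (C) to apply.
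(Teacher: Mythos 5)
Your proposal is correct and follows essentially the same route as the paper: both pass through the isomorphism $K_{\bullet}(\xx-\UU;X[[\UU]]) \cong \varprojlim K_{\bullet}(\xx-\UU;X[\UU]/\UU^{(n)}X[\UU])$, use degree-wise surjectivity (Mittag-Leffler) together with \ref{prel-4} to compare this limit with the microscope, and transport across the level-wise quasi-isomorphisms of \ref{weak-5} via the invariance of microscopes, the only difference being that the paper extracts the short exact sequences from the $\varprojlim$--$\varprojlim^1$ sequence of \ref{hoc-4} (C) and then identifies the homology systems by \ref{weak-5} --- precisely the alternative you mention in parentheses.
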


\begin{proof} 
	The quasi-isomorphisms shown in \ref{weak-5} induce a quasi-isomorphism 
	for the corresponding microscopes  
	\[
	\Mic(\{K_{\bullet}(\xx-\UU;X[\UU]/\UU^{(n)}X[\UU]) \}) \simeq 
	\Mic(\{K_{\bullet}(\xx^{(n)};X) \}) = M(\xx;X)
	\]
	(see \ref{hoc-4} (D)). 
	Now we recall that $K_{\bullet}(\xx-\UU;X[[\UU]]) \cong \varprojlim 
	K_{\bullet}(\xx-\UU;X[\UU]/\UU^{(n)}X[\UU])$. Moreover, the inverse system in the previous system is degree-wise surjective.  By \ref{hoc-4} (B) we get 
	a quasi-isomorphism $K_{\bullet}(\xx-\UU;X[[\UU]]) \simeq 
		\Mic(\{K_{\bullet}(\xx-\UU;X[\UU]/\UU^{(n)}X[\UU]) \})$, which proves the first statement. 
	By \ref{hoc-4} (C) there are short exact sequences 
	\[
	0 \to \varinjlim{}^1 H_i(\xx-\UU;X[\UU]/\UU^{(n)}X[\UU]) \to H_i(\xx-\UU;X[[\UU]]) \to \varprojlim H_i(\xx-\UU;X[\UU]/\UU^{(n)}X[\UU]) 
	\to 0
	\]
	for all $i \in \mathbb{Z}$. By \ref{weak-5} there is an isomorphism of inverse systems 
	\[
	\{H_i(\xx^{(n)};X)\}_{n \geq 1} \cong \{H_i(\xx-\UU;X[\UU]/\UU^{(n)}X[\UU])\}_{n \geq 1}
	\]
	and therefore $\varprojlim H_i(\xx^{(n)};X) \cong \varprojlim H_i(\xx-\UU;X[\UU]/\UU^{(n)}X[\UU])$ for all $i \in \mathbb{Z}$. 
	This completes the proof.
\end{proof}

In the following there are further properties of $M$-weakly pro-regular 
sequences.

\begin{corollary} \label{weak-7}
	Let $\xx = x_1,\ldots,x_r$ denote a system of elements of a commutative 
	ring $R$, and let $M$ be an $R$-module. Let $\hat{M}^{\mathfrak{a}}$ denote the $\mathfrak{a}$-adic completion with $\mathfrak{a} = \xx R$.  
	\begin{itemize}
		\item[(a)] $\xx$ is an $M$-weakly pro-regular sequence if and only if 
		the inverse system $$\{H_i(\xx-\UU;M[\UU]/\UU^{(n)}M[\UU])\}_{n \geq 1}$$ is pro-zero for all $i >0$.
		\item[(b)] There is an isomorphism and an epimorphism  
		$$
		H_0(\xx-\UU;M[[\UU]]) \cong M[[\UU]]/(\xx-\UU)M[[\UU]] \twoheadrightarrow \hat{M}^{\mathfrak{a}}.
		$$
		The epimorphism is an isomorphism if and only if 
		$\varprojlim{}^1 H_1(\xx^{(n)};M) =0$.
		\item[(c)] If $\xx = x_1,\ldots,x_r$ is an $M$-weakly 
		pro-regular sequence, then 
		\[
		H_0(\xx-\UU;M[[\UU]]) \cong \hat{M}^{\mathfrak{a}} \mbox{ and } 
		H_i(\xx-\UU;M[[\UU]]) = 0 \mbox{ for all } i >0.
		\]
		That is, $K_{\bullet}(\xx-\UU;M[[\UU]])$ is a left resolution of $\hat{M}^{\mathfrak{a}}$.
	\end{itemize}
\end{corollary}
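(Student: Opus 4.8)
The plan is to derive all three parts as formal consequences of the two structural results \ref{weak-5} and \ref{weak-6}, together with the elementary fact that a pro-zero inverse system has both vanishing inverse limit and vanishing $\varprojlim^1$.

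For part (a), I would take the $R$-complex $X$ in \ref{weak-5} to be the module $M$ and invoke the resulting isomorphism of inverse systems
\[
\{H_i(\xx^{(n)};M)\}_{n \geq 1} \cong \{H_i(\xx-\UU;M[\UU]/\UU^{(n)}M[\UU])\}_{n \geq 1}.
\]
Since an isomorphism of inverse systems manifestly preserves the pro-zero condition, and since $H_i(\xx^{(n)};M)$ vanishes for $i<0$, the defining condition from \ref{prel-5} that $\{H_i(\xx^{(n)};M)\}$ be pro-zero for all $i \neq 0$ is equivalent to pro-zeroness of $\{H_i(\xx-\UU;M[\UU]/\UU^{(n)}M[\UU])\}$ for all $i>0$. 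This is exactly the asserted equivalence.

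For part (b), the isomorphism $H_0(\xx-\UU;M[[\UU]]) \cong M[[\UU]]/(\xx-\UU)M[[\UU]]$ is immediate from the description of zeroth Koszul homology as the cokernel of the top differential. For the epimorphism I would read off the case $i=0$ of the short exact sequence of \ref{weak-6},
\[
0 \to \varprojlim{}^1 H_{1}(\xx^{(n)};M) \to H_0(\xx-\UU;M[[\UU]]) \to \varprojlim H_0(\xx^{(n)};M) \to 0.
\]
Here $H_0(\xx^{(n)};M) = M/\xx^{(n)}M$, and because the ideals $\xx^{(n)}R = (x_1^n,\ldots,x_r^n)$ are cofinal with the powers $\mathfrak{a}^n$ of $\mathfrak{a} = \xx R$, the inverse limit $\varprojlim H_0(\xx^{(n)};M)$ is naturally the $\mathfrak{a}$-adic completion $\hat{M}^{\mathfrak{a}}$. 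Thus the right-hand map is the stated epimorphism, with kernel $\varprojlim^1 H_1(\xx^{(n)};M)$; hence it is an isomorphism precisely when that $\varprojlim^1$ term vanishes.

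For part (c), I would combine (a) and (b) with the pro-zero hypothesis. If $\xx$ is $M$-weakly pro-regular, then $\{H_i(\xx^{(n)};M)\}$ is pro-zero for every $i>0$, so both $\varprojlim$ and $\varprojlim^1$ of these systems vanish. Feeding this into \ref{weak-6} for $i>0$,
\[
0 \to \varprojlim{}^1 H_{i+1}(\xx^{(n)};M) \to H_i(\xx-\UU;M[[\UU]]) \to \varprojlim H_i(\xx^{(n)};M) \to 0,
\]
both outer terms vanish, forcing $H_i(\xx-\UU;M[[\UU]]) = 0$; while for $i=0$ the vanishing of $\varprojlim^1 H_1(\xx^{(n)};M)$ upgrades the epimorphism of (b) to an isomorphism onto $\hat{M}^{\mathfrak{a}}$. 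Together these say that $K_{\bullet}(\xx-\UU;M[[\UU]])$ is a left resolution of $\hat{M}^{\mathfrak{a}}$. The only genuinely delicate point I anticipate is the identification $\varprojlim H_0(\xx^{(n)};M) \cong \hat{M}^{\mathfrak{a}}$ in (b): one must check that the transition maps appearing in \ref{weak-6} are compatible with the cofinality of $\{\xx^{(n)}R\}$ and $\{\mathfrak{a}^n\}$, so that the limit really computes the $\mathfrak{a}$-adic topology and not some inequivalent one. Everything else is formal bookkeeping with the two short exact sequences.
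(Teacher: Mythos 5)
Your proposal is correct and follows essentially the same route as the paper: part (a) is the isomorphism of inverse systems from \ref{weak-5} combined with the definition of $M$-weak pro-regularity, part (b) reads off the $i=0$ case of the $\varprojlim$--$\varprojlim^1$ exact sequence of \ref{weak-6} together with the identification $\varprojlim M/\xx^{(n)}M \cong \hat{M}^{\mathfrak{a}}$, and part (c) feeds the vanishing of $\varprojlim$ and $\varprojlim^1$ for pro-zero systems back into \ref{weak-6} and (b). The cofinality point you flag (that $\mathfrak{a}^{r(n-1)+1} \subseteq \xx^{(n)}R \subseteq \mathfrak{a}^n$, so both filtrations induce the same topology) is real but standard, and the paper passes over it silently.
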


\begin{proof}
	(a): There are isomorphisms of inverse systems 
	$$
	\{H_i(\xx^{(n)};M)\}_{n \geq 1} \cong \{H_i(\xx-\UU;X[\UU]/\UU^{(n)}M[\UU])\}_{n \geq 1} \mbox{ for all } i \geq 0
	$$ 
	with the corresponding inverse maps
	(see \ref{weak-5}). Then the claim follows by the definition of the notion of an $M$-weakly pro-regular sequence. \\
	(b): By virtue of \ref{weak-6} there is a short exact sequence 
	\[
	0 \to \varprojlim{}^1 H_1(\xx^{(n)};M) \to H_0(\xx-\UU;M[[\UU]]) 
	\to \varprojlim H_0(\xx^{(n)};M) \to 0.
	\]
	Because  $H_0(\xx-\UU;M[[\UU]]) \cong M[[\UU]]/(\xx-\UU)M[[\UU]]$ 
	as it is clear by the Koszul homology and because  
	$$
	\varprojlim H_0(\xx^{(n)};M) \cong \varprojlim M/\xx^{(n)}M \cong \hat{M}^{\mathfrak{a}}
	$$
	the statements follow. \\
	(c): If $\xx$ is an $M$-weakly pro-regular sequence, then $\{H_i(\xx^{(n)};M)\}_{n \geq 1}$ is pro-zero for all $i > 0$. Therefore,
	$\varprojlim H_i(\xx^{(n)};M) = \varprojlim{}^1 H_i(\xx^{(n)};M) = 0$ for 
	all $i \geq 1$. Then the statement follows by \ref{weak-6} and (b).
\end{proof}

For the next result let $\xx = x_1,\ldots,x_r$ be sequence in $R$ and $\mathfrak{a} = \xx R$. The statement in \ref{weak-7} (b) yields that $\hat{R}^{\mathfrak{a}}$ 
has a natural structure as an $R[[\UU]]$-module given by 
\[
R[[\UU]] \to \hat{R}^{\mathfrak{a}}, \; r(U_1,\ldots,U_r) \mapsto r(x_1,\ldots,x_r).
\]
By \ref{weak-7} (c) $K_{\bullet}(\xx-\UU;R[[\UU]])$ is a bounded left $R[[\UU]]$-free resolution of $\hat{R}^{\mathfrak{a}}$ 
and $H_0(\xx-\UU;R[[\UU]]) \cong \hat{R}^{\mathfrak{a}}$ provided 
$\xx$ is a weakly pro-regular sequence. 

\begin{corollary} \label{weak-9}
	Let $\xx = x_1,\ldots,x_r$ be a weakly pro-regular sequence and $\mathfrak{a} = \xx R$. Let $X$ denote an $R$-complex. Then there are isomorphisms 
	\[
	\Lambda_i^{\mathfrak{a}}(X) \cong H_i(\xx-\UU;X[[\UU]]) \cong 
	\Tor_i^{R[[\UU]]}(\hat{R}^{\mathfrak{a}},X[[\UU]]) 
	\cong \Ext_{R[[\UU]]}^{r-i}(\hat{R}^{\mathfrak{a}},X[[\UU]]) 
	\]
	for all $i \in \mathbb{Z}$. In particular, for an $R$-module $M$ it follows 
	that $\Lambda_0^{\mathfrak{a}}(M) \cong \hat{R}^{\mathfrak{a}} \otimes_{R[[\UU]]} M[[\UU]]$. 
\end{corollary}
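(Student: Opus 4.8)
The plan is to assemble the asserted chain of isomorphisms from results already at hand, the pivotal one being that the Koszul complex $K_{\bullet}(\xx-\UU;R[[\UU]])$ is a bounded free resolution of $\hat{R}^{\mathfrak{a}}$ over $R[[\UU]]$. The first isomorphism $\Lambda_i^{\mathfrak{a}}(X) \cong H_i(\xx-\UU;X[[\UU]])$ is nothing but Theorem \ref{weak-2}, so the task reduces to identifying the Koszul homology with the displayed $\Tor$ and $\Ext$ groups. To begin, I would record the factorization
\[
K_{\bullet}(\xx-\UU;X[[\UU]]) \cong K_{\bullet}(\xx-\UU;R[[\UU]]) \otimes_{R[[\UU]]} X[[\UU]],
\]
obtained by tensoring the Koszul complex of the free $R[[\UU]]$-modules $K_{\bullet}(\xx-\UU;R[[\UU]])$ against the coefficient complex $X[[\UU]]$, exactly as in the opening line of the proof of \ref{hoc-3} but now carried out over $R[[\UU]]$ in place of $R[\UU]$; here $X[[\UU]]$ carries its $R[[\UU]]$-complex structure from \ref{prel-6}.

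Next I would exploit weak pro-regularity. Applying \ref{weak-7} (c) with $M = R$ shows that $K_{\bullet}(\xx-\UU;R[[\UU]])$ has homology $\hat{R}^{\mathfrak{a}}$ concentrated in degree $0$ and vanishing elsewhere; since it is a bounded complex of free, hence $K$-flat, $R[[\UU]]$-modules, it is a genuine free resolution of $\hat{R}^{\mathfrak{a}}$. Tensoring this resolution with $X[[\UU]]$ therefore computes the derived tensor product, and by the factorization above we obtain
\[
H_i(\xx-\UU;X[[\UU]]) \cong \Tor_i^{R[[\UU]]}(\hat{R}^{\mathfrak{a}},X[[\UU]])
\]
for all $i \in \mathbb{Z}$. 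It is precisely the $K$-flatness of the resolving complex that permits $X$ to be an arbitrary complex (the $\Tor$ being interpreted as a hyper-$\Tor$) rather than a module.

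For the $\Ext$ identification I would appeal to the self-duality of the Koszul complex, just as in the passage from (a) to (b) in \ref{hoc-3}. Dualizing the same free resolution gives $\Ext_{R[[\UU]]}^{r-i}(\hat{R}^{\mathfrak{a}},X[[\UU]]) \cong H^{r-i}(K^{\bullet}(\xx-\UU;X[[\UU]]))$, and self-duality identifies the Koszul cohomology in degree $r-i$ with the Koszul homology in degree $i$, yielding the desired isomorphism with $\Tor_i^{R[[\UU]]}(\hat{R}^{\mathfrak{a}},X[[\UU]])$. Specializing to a module $X = M$ and reading off the case $i = 0$ of the $\Tor$-identification gives $\Lambda_0^{\mathfrak{a}}(M) \cong \Tor_0^{R[[\UU]]}(\hat{R}^{\mathfrak{a}},M[[\UU]]) = \hat{R}^{\mathfrak{a}} \otimes_{R[[\UU]]} M[[\UU]]$, which is the final assertion.

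The step I expect to be the crux is establishing that $K_{\bullet}(\xx-\UU;R[[\UU]])$ resolves $\hat{R}^{\mathfrak{a}}$ over $R[[\UU]]$, rather than merely resolving $R$ over $R[\UU]$ (where $\xx-\UU$ is an honest regular sequence). This is exactly where the weak pro-regularity hypothesis does its work, via the vanishing of the higher Koszul homology furnished by \ref{weak-7} (c); with that resolution secured, the passage to $\Tor$ and $\Ext$ is formal, resting only on $K$-flatness and the self-duality of the Koszul complex.
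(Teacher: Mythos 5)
Your proposal is correct and follows essentially the same route as the paper: the first isomorphism via Theorem \ref{weak-2}, the factorization $K_{\bullet}(\xx-\UU;X[[\UU]]) \cong K_{\bullet}(\xx-\UU;R[[\UU]]) \otimes_{R[[\UU]]} X[[\UU]]$, the identification of $K_{\bullet}(\xx-\UU;R[[\UU]])$ as a bounded $R[[\UU]]$-free resolution of $\hat{R}^{\mathfrak{a}}$ via \ref{weak-7}~(c), and self-duality of the Koszul complex for the $\Ext$ groups. Your added remarks on $K$-flatness and the hyper-$\Tor$ interpretation for an arbitrary complex $X$ only make explicit what the paper leaves implicit.
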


\begin{proof}
	The first isomorphism follows by \ref{weak-2}.
	There is the following isomorphism of Koszul complexes 
	\[
	 K_{\bullet}(\xx-\UU;X[[\UU]]) \cong K_{\bullet}(\xx-\UU;R[[\UU]]) 
	 \otimes_{R[[\UU]]} X[[\UU]].
	\]
	Since $K_{\bullet}(\xx-\UU;R[[\UU]])$ is a left $R[[\UU]]$-free resolution 
	of $\hat{R}^{\mathfrak{a}}$ as $R[[\UU]]$-module. Then the second isomorphisms 
	follow by the definition. The last isomorphisms are a consequence of the self-duality of the Koszul complex.
\end{proof}

In the following we discuss the necessity of the condition in \ref{weak-7} (b).
The following example was suggested by Anne Marie Simon.  

\begin{example} \label{weak-8}
	(A) Let $\Bbbk$ denote a field and let $R = \Bbbk[[x]]$ denote the formal 
	power series ring in the variable $x$ over $\Bbbk$. Let $M = \prod_{i \geq 1} 
	R/x^iR$. Then $M$ is -- as a direct product of $xR$-complete modules -- 
	$xR$-adic complete (see \cite[2.2.7]{SS}). 
	
	By \ref{weak-7} and \ref{weak-8} we see that $H_0(x-U;M[[U]]) \cong M$ and 
	$H_i(x-U;M[[U]]) = 0$ for $i >0$. But $M$ is not of bounded $xR$-torsion 
	as follows since $\oplus_{i \geq 1} R/x^iR \subset M$. \\
	(B) Another feature of the Example in (A) is the following: Note that 
	an inverse system of $R$-modules $\{M_n\}_{n \in \mathbb{N}}$ is called 
	pro-zero if for any integer $n$ there is an integer $m \geq n$ such that 
	the transition map $M_m \to M_n$ is zero. If $\{M_n\}_{n \in \mathbb{N}}$  
	is pro-zero, then 
	\[
	\varprojlim M_n = \varprojlim{}^1 M_n = 0
	\]
	(see e.g. \cite[1.2.4]{SS}). \\
	(C) It is natural to ask whether the converse of the implication in (B) 
	is true. This is not the case. To this end we investigate the example in (A). By the conclusions in (A) and by view of  \ref{weak-6} it follows that 
	\[
	\varprojlim H_1(x^n;M) = \varprojlim{}^1 H_1(x^n;M) = 0,
	\]
	where $\{H_1(x^n;M)\}_{n \geq 1}$ is the inverse system with 
	$H_1(x^n;M) = 0:_M x^n$ and the transition map 
	$$
	\mu_{m-n} : 0:_M x^m \to 0:_M x^n
	$$ 
	is the multiplication by $x^{m-n}$ for $m \geq n$. Because $M$ is not of bounded $xR$-torsion it follows that the inverse system $\{0:_M x^n\}_{n \geq 1}$ is not pro-zero.
\end{example}

\section{\v{C}ech cohomology via Koszul complexes}
\begin{z} \label{coh-1} {\it Inverse polynomials.}
	(A) Let $R$ denote a commutative ring. Let $R[\UU]$ denote the 
	polynomial ring in the variables $\UU = U_1,\ldots,U_r$ over $R$. 
	We are interested in the \v{C}ech complex $\check{C}_{\UU}(R[\UU])$. 
	We have the isomorphism of $R[\UU]$-complexes
	\[
	\check{C}_{\UU}(R[\UU]) \cong \varinjlim K^{\bullet}(\UU^{(n)};R[\UU]).
	\]
	Since $\UU$ forms an $R[\UU]$-regular sequence $K^{\bullet}(\UU^{(n)};R[\UU])$, is a left resolution of $H^r(\UU^{(n)};R[\UU] \cong R[\UU]/\UU^{(n)}R[\UU]$ by free $R$-modules.
	Moreover, the \v{C}ech complex $\check{C}_{\UU}(R[\UU])$ is 
	a flat resolution of $H^r(\check{C}_{\UU}(R[\UU]))$. That is, there is a quasi-isomorphism $\check{C}_{\UU}(R[\UU]) \simeq H^r(\check{C}_{\UU}(R[\UU]))^{[-r]}$.
	Because of the natural map $K^{\bullet}(\UU^{(n)};R[\UU]) \to 
	K^{\bullet}(\UU^{(n+1)};R[\UU])$  
	it follows that 
	\[
	\check{C}_{\UU}(R[\UU]) \cong \varinjlim K^{\bullet}(\UU^{(n)};R[\UU]), 
	\]
	and therefore $H^r(\check{C}_{\UU}(R[\UU])) \cong \varinjlim \{H^r(\UU^{(n)};R[\UU]), U_1\cdots U_r\}$.
	Note that direct limits commute with homology and the natural map 
	$H^r(\UU^{(n)};R[\UU]) \to H^r(\UU^{(n+1)};R[\UU])$ is the multiplication 
	by $U_1\cdots U_r$.\\
	(B) An interpretation of $H^r(\check{C}_{\UU}(R[\UU]))$ 
	is shown by Brodmann and Sharp (see \cite[13.5]{BrS}).
	The \v{C}ech complex $\check{C}_{\UU}(R[\UU])$ is a resolution of 
	$H^r(\check{C}_{\UU}(R[\UU]))^{[-r]}$ because $\UU$ is an $R[\UU]$-regular sequence. By an inspection of the complex it follows that  
	\[
	H^r(\check{C}_{\UU}(R[\UU])) \cong R[\UU^{-}].
	\]
	Here $R[\UU^{-}]$ denotes the free $R$-module with basis $\{U_1^{-i_1}\cdots 
	U_r^{-i_r}, (i_1,\ldots,i_r) \in \mathbb{N}_+^r\}$, where $\mathbb{N}_+^r = 
	(\mathbb{N}_+)^r$. Its $R[\UU]$-module structure is given by 
	\[
	U_k\cdot (U_1^{-i_1}\cdots U_r^{-i_r}) = 
		\begin{cases} 
		U_1^{-i_1}\cdots U_k^{-i_k+1} \cdots U_r^{-i_r} & \mbox{ if } i_k < 1\\
		0 & \mbox{ if } i_k = 1.
		\end{cases}
	\]
	Moreover, let $R[\UU^{-1}] = R[U_1^{-1},\ldots,U_r^{-1}]$ denote the polynomial ring over $R$ in $U_1^{-1},\ldots,U_r^{-1}$. We recall that 
	$R[\UU^{-1}]$ is an $R[\UU]$-module where the product  of 
	$rU_1^{j_1}\cdots U_r^{j_r} $ and $sU_1^{-i_1}\cdots U_r^{-i_r}, r,s \in R,$ is zero 
	unless $j_k \leq i_k$ for $k = 1,\ldots,r,$ in which case it is defined by 
	\[
	(rU_1^{j_1}\cdots U_r^{j_r} ) \cdot (sU_1^{-i_1}\cdots U_r^{-i_r}) = 
	rs U_1^{j_1-i_1}\ldots U_r^{j_r-i_r}.
	\]
	The $R[\UU]$-module $R[\UU^{-1}]$ is called the module of 
	inverse polynomials as 
	introduced by F. S. Macaulay (see \cite{Mfs}). There is an isomorphism 
	of $R[\UU]$-modules 
	\[
		R[\UU^{-}] \cong U_1 \cdots U_r\; R[\UU^{-1}] .
	\]
	Therefore there is an $R[\UU]$-isomorphism $H^r(\check{C}_{\UU}(R[\UU])) 
	\cong R[\UU^{-1}]$.
	
	In the particular case of $R = \Bbbk$ a field. It follows that $\Bbbk 
	[\UU^{-1}]$ as an $\Bbbk [\UU]$-module is the injective hull of $\Bbbk \cong 
	\Bbbk [\UU]/(\UU)$ (see e.g. \cite{Ndg} for some details).
	\\
	(C)   For an $R$-module $M$ we define $M[\UU^{-1}] = R[\UU^{-1}] 
	\otimes_R M$. 
	Clearly $M[\UU^{-1}]$ is an $R[\UU]$-module by the structure of $R[\UU^{-1}]$ 
	as $R[\UU]$-module as given before. Let $X$ denote an $R$-complex. Then we 
	define $X[\UU^{-1}]$ degree-wise. Moreover, $X[\UU^{-1}]$ is also an $R[\UU]$-complex. If $X \to Y$ is a quasi-isomorphism, then the induced 
	morphism $X[\UU^{-1}] \to Y[\UU^{-1}]$ is a quasi-isomorphism too.
	Moreover, for an $R$-module $M$ there is an isomorphism 
	\[
	M[\UU^{-1}] \cong R[\UU^{-1}] \otimes_{R[\UU]} M[\UU].
	\]
	This follows since
	\[
	R[\UU^{-1}] \otimes_{R[\UU]} M[\UU] \cong R[\UU^{-1}] \otimes_{R[\UU]} (R[\UU] \otimes_R M) \cong R[\UU^{-1}] \otimes_R M.
	\] 
	It extends to an isomorphism $X[\UU^{-1}] \cong R[\UU^{-1}] \otimes_{R[\UU]} X[\UU]$ for an $R$-complex $X$. Therefore the $R$-complex $X[\UU^{-1}]$ 
	becomes the structure of an $R[\UU]$-complex.
\end{z}

Next we shall give a dual statement to \ref{weak-5} for the Koszul 
cohomology.

\begin{lemma} \label{coh-2}
	Let $\xx = x_1,\ldots,x_r$ denote a system of the commutative ring $R$. Let 
	$\UU = U_1,\ldots,U_r$ be a set of variables over $R$. For an $R$-complex $X$ 
	and an integer $n \geq 1$ there is a quasi-isomorphism of $R$-complexes 
	\[
	K^{\bullet}(\xx-\UU;X[\UU]/\UU^{(n)}X[\UU]) \simeq K^{\bullet}(\xx^{(n)};X)
	\]
	such that for positive integers $m \geq n$ there is a commutative diagram 
	\[
	\xymatrix{
		H^i(\xx-\UU;X[\UU]/\UU^{(n)}X[\UU]) \ar[d]^{\mu_{m-n}} \ar[r]^-{\cong} & H^i(\xx^{(n)};X) \ar[d]\\
		H^i(\xx-\UU;X[\UU]/\UU^{(m)}X[\UU])  \ar[r]^-{\cong}& H^i(\xx^{(m)};X), 
	}
	\]
	for all $i \in \mathbb{Z}$, where the first vertical homomorphism 
	$\mu_{m-n}$ is the multiplication by $U_1^{m-n}\cdots U_r^{m-n}$ and the second is the natural one. 
\end{lemma}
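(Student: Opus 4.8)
The plan is to dualize the proof of \ref{weak-5}, systematically replacing the Koszul complexes $K_{\bullet}$ by the Koszul co-complexes $K^{\bullet}$ and reversing the direction of the natural transition maps. Exactly as in \ref{weak-5} I put $f_n(T,U) = (T^n-U^n)/(T-U) = T^{n-1}+T^{n-2}U+\ldots+U^{n-1}$, so that $x_i^n = (x_i-U_i)f_n(x_i,U_i)+U_i^n$ for $i=1,\ldots,r$. This again produces an invertible $2r\times 2r$-matrix $\mathfrak{A}$ with $(\xx-\UU,\xx^{(n)}) = (\xx-\UU,\UU^{(n)})\cdot\mathfrak{A}$, whence by \ref{prel-3} (B) an isomorphism of $R$-complexes
\[
K^{\bullet}(\xx-\UU,\UU^{(n)};X[\UU]) \cong K^{\bullet}(\xx-\UU,\xx^{(n)};X[\UU]),
\]
which, as in \ref{weak-5}, is compatible for $m \geq n$ with the natural maps on both sides, the compatibility of the matrices $\mathfrak{A}$ with the diagonal transition matrices being checked through \ref{prel-3} (C).

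Next I decompose both concatenated co-complexes as tensor products over $R[\UU]$. Since $K^{\bullet}(\xx-\UU;R[\UU])$, $K^{\bullet}(\UU^{(n)};R[\UU])$ and $K^{\bullet}(\xx^{(n)};R[\UU])$ are bounded complexes of free $R[\UU]$-modules, there are isomorphisms
\[
K^{\bullet}(\xx-\UU,\UU^{(n)};X[\UU]) \cong K^{\bullet}(\xx-\UU;R[\UU]) \otimes_{R[\UU]} K^{\bullet}(\UU^{(n)};X[\UU])
\]
and
\[
K^{\bullet}(\xx-\UU,\xx^{(n)};X[\UU]) \cong K^{\bullet}(\xx^{(n)};R[\UU]) \otimes_{R[\UU]} K^{\bullet}(\xx-\UU;X[\UU]).
\]
Here the decisive point is that $\UU^{(n)}$ and $\xx-\UU$ are $R[\UU]$-regular sequences on $X[\UU]$; hence an iterated application of the fibre sequence \ref{prel-7} (b), whose kernel subcomplexes vanish by regularity, yields isomorphisms of $R$-complexes $K^{\bullet}(\UU^{(n)};X[\UU]) \cong (X[\UU]/\UU^{(n)}X[\UU])^{[+r]}$ and, using $X[\UU]/(\xx-\UU)X[\UU] \cong X$, also $K^{\bullet}(\xx-\UU;X[\UU]) \cong X^{[+r]}$ (with $X$ regarded as an $R[\UU]$-module via $U_i \mapsto x_i$). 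Tensoring these with the respective free co-complexes preserves quasi-isomorphism, so both sides above become, up to the same shift $[+r]$,
\[
K^{\bullet}(\xx-\UU;X[\UU]/\UU^{(n)}X[\UU])^{[+r]} \quad\text{and}\quad K^{\bullet}(\xx^{(n)};X)^{[+r]}.
\]
Cancelling the common shift and inserting the matrix isomorphism of the first step gives the asserted quasi-isomorphism $K^{\bullet}(\xx-\UU;X[\UU]/\UU^{(n)}X[\UU]) \simeq K^{\bullet}(\xx^{(n)};X)$, compatibly with the transition maps for $m \geq n$.

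It remains to identify the two vertical maps in the commutative square. On the right-hand side the natural map $K^{\bullet}(\xx^{(n)};X) \to K^{\bullet}(\xx^{(m)};X)$ is the standard one of \ref{prel-2} (B), induced by the diagonal matrix with entries $x_i^{m-n}$. On the left-hand side the transition map is the one carried over from the natural map $K^{\bullet}(\UU^{(n)};X[\UU]) \to K^{\bullet}(\UU^{(m)};X[\UU])$, induced by the diagonal matrix $\mathfrak{B}$ with entries $U_i^{m-n}$. Because the co-homology of these co-complexes of a regular sequence sits in the top degree $r$, where the complex is the rank-one exterior power, $\mathfrak{B}$ acts there as its determinant $\det\mathfrak{B} = U_1^{m-n}\cdots U_r^{m-n}$; a direct check shows that multiplication by this product descends to a well-defined homomorphism $X[\UU]/\UU^{(n)}X[\UU] \to X[\UU]/\UU^{(m)}X[\UU]$, namely the map $\mu_{m-n}$ of the statement. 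I expect this last identification to be the main obstacle: one must verify that the natural co-complex transition map, once transported through the tensor decomposition and the regularity isomorphisms, induces on $H^r$ exactly multiplication by the \emph{product} $U_1^{m-n}\cdots U_r^{m-n}$ rather than by an individual power $U_i^{m-n}$, and that this matches the determinant of $\mathfrak{B}$ on the top exterior power. The remaining commutativities then follow formally from the functoriality of the constructions in \ref{prel-3} and the naturality of the fibre sequence \ref{prel-7}.
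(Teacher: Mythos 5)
Your proposal is correct and follows essentially the same route as the paper's own proof: the invertible matrix $\mathfrak{A}$ from \ref{weak-5}, the tensor decomposition over $R[\UU]$, the collapse of $K^{\bullet}(\UU^{(n)};X[\UU])$ and of $K^{\bullet}(\xx-\UU;X[\UU])$ via iterated use of \ref{prel-7} (b) together with regularity, and the identification of the transition map on the top cohomology with multiplication by $U_1^{m-n}\cdots U_r^{m-n}$. The only caveat is terminological: the collapses $K^{\bullet}(\UU^{(n)};X[\UU]) \to (X[\UU]/\UU^{(n)}X[\UU])^{[+r]}$ and $K^{\bullet}(\xx-\UU;X[\UU]) \to X^{[+r]}$ are quasi-isomorphisms rather than isomorphisms of complexes, which is precisely why your appeal to the freeness of $K^{\bullet}(\xx-\UU;R[\UU])$ when tensoring is needed and correct.
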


\begin{proof}
	Let $m \geq n$ be two positive integers.
	By the use of the invertible $2r\times 2r$ matrix $\mathfrak{A}$ as defined 
	in the proof of \ref{weak-5} there is a commutative diagram 
	\[
	\xymatrix{
		K^{\bullet}(\xx-\UU,\UU^{(n)};X[\UU])  \ar[d] \ar[r]^-{\cong}& 
		K^{\bullet}(\xx-\UU,\xx^{(n)};X[\UU]) \ar[d]\\
		K^{\bullet}(\xx-\UU,\UU^{(m)};X[\UU])  \ar[r]^-{\cong} & 
		K^{\bullet}(\xx-\UU,\xx^{(m)};X[\UU]), 
	}
	\]
	where the vertical morphisms are the natural ones. Now there are isomorphisms 
	\[
	K^{\bullet}(\xx-\UU,\UU^{(n)};X[\UU]) \cong K^{\bullet}(\xx-\UU;R[\UU])
	\otimes_{R[\UU]} K^{\bullet}(\UU^{(n)};X[\UU]). 
	\]
	By an iterative use of \ref{prel-7} (b) there is a quasi-isomorphism 
	$K^{\bullet}(\UU^{(n)};X[\UU]) \to (X[\UU]/\UU^{(n)}X[\UU])$ such that there is a commutative diagram
	\[
	\xymatrix{
		K^{\bullet}(\UU^{(n)};X[\UU]) \ar[d] \ar[r]^-{\cong} \ar[d] & 
		(X[\UU]/\UU^{(n)}X[\UU]) \ar[d]^{\mu_{m-n}} \\
		K^{\bullet}(\UU^{(m)};X[\UU]) \ar[r]^-{\cong}  & 
		(X[\UU]/\UU^{(m)}X[\UU]).
	}
	\]
 	Since $K_{\bullet}(\xx-\UU;R[\UU])$ is a complex of free $R[\UU]$-modules it induces a commutative diagram 
	\[
	\xymatrix{
		K^{\bullet}(\xx-\UU,\UU^{(n)};X[\UU])  \ar[d] \ar[r]&   
		K^{\bullet}(\xx-\UU;X[\UU]/\UU^{(n)}X[\UU]) \ar[d]^{\mu_{m-n}} \\
		K^{\bullet}(\xx-\UU,\UU^{(m)};X[\UU])  \ar[r]& K^{\bullet}(\xx-\UU;
		X[\UU]/\UU^{(m)}X[\UU]), 
	}
	\]
	where the horizontal morphisms are quasi-isomorphisms and the first vertical map is the natural one and the second, $\mu_{m-n}$, is multiplication by 
	$U_1^{m-n}\cdots U_r^{m-n}$. 
	
	Next recall that $X[\UU]/(\xx-\UU)X[\UU] \cong X$ for an $R$-complex $X$ as follows by 
	degree-wise inspection. By a similar argument as before we get a commutative 
	diagram 
	\[
	\xymatrix{
		K^{\bullet}(\xx-\UU,\xx^{(m)};X[\UU]) \ar[r] \ar[d] & K^{\bullet}(\xx^{(m)};X) \ar[d]\\
		K^{\bullet}(\xx-\UU,\xx^{(n)};X[\UU]) \ar[r]  & K^{\bullet}(\xx^{(n)};X)
	}
	\]
	where the horizontal morphisms are quasi-isomorphisms and the vertical maps are 
	the natural ones. Putting together the isomorphism with the quasi-isomorphisms 
	this proves the claims.
\end{proof}

Now we are ready to prove the main result for the Koszul cohomology.

\begin{theorem} \label{coh-3}
	Let $\xx = x_1,\ldots,x_r$ be a system of elements of $R$. Then there is a 
	quasi-isomorphism 
	\[
	K^{\bullet}(\xx-\UU;X[\UU^{-1}]) \simeq \check{C}_{\xx}(X)
	\]
	for any $R$-complex $X$.Therefore 
	\[
	H^i(\xx-\UU;X[\UU^{-1}]) \cong H^i(\check{C}_{\xx}(X))
	\]
	for all $i \in \mathbb{Z}$. In the derived category $K^{\bullet}(\xx-\UU;X[\UU^{-1}])$ is a representative of $\check{C}_{\xx}(X)$.
\end{theorem}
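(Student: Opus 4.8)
The plan is to realize the complex of inverse polynomials as a filtered direct limit of the truncations $X[\UU]/\UU^{(n)}X[\UU]$ occurring in Lemma \ref{coh-2}, and then to transport the quasi-isomorphisms of \ref{coh-2} through that limit. First I would record the direct-limit presentation of $X[\UU^{-1}]$. By \ref{coh-1} the $R[\UU]$-module $R[\UU^{-1}]$ is isomorphic to $\varinjlim\{R[\UU]/\UU^{(n)}R[\UU], \cdot\, U_1\cdots U_r\}$, this being just $H^r(\check{C}_{\UU}(R[\UU]))$ rewritten, with transition map from level $n$ to level $n+1$ given by multiplication by $U_1\cdots U_r$. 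Tensoring this presentation with $X[\UU]$ over $R[\UU]$ and using that tensor commutes with direct limits yields
\[
X[\UU^{-1}] = R[\UU^{-1}]\otimes_{R[\UU]} X[\UU] \cong \varinjlim X[\UU]/\UU^{(n)}X[\UU],
\]
again with transition maps multiplication by $U_1\cdots U_r$.

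Second I would observe that the Koszul co-complex functor commutes with filtered direct limits. This holds because $K^{\bullet}(\xx-\UU;-) = \Hom_{R[\UU]}(K_{\bullet}(\xx-\UU;R[\UU]),-)$ and $K_{\bullet}(\xx-\UU;R[\UU])$ is a bounded complex of finitely generated free $R[\UU]$-modules, so the functor preserves direct limits degree-wise, hence as complexes. Combined with the first step this gives
\[
K^{\bullet}(\xx-\UU;X[\UU^{-1}]) \cong \varinjlim K^{\bullet}(\xx-\UU;X[\UU]/\UU^{(n)}X[\UU]).
\]

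Third, and this is the crux, I would feed in Lemma \ref{coh-2}. For each $n$ it supplies a quasi-isomorphism $K^{\bullet}(\xx-\UU;X[\UU]/\UU^{(n)}X[\UU]) \simeq K^{\bullet}(\xx^{(n)};X)$, and the commutative square there shows that the transition map on the left — multiplication by $U_1^{m-n}\cdots U_r^{m-n}$, exactly the map found above — corresponds to the natural map $K^{\bullet}(\xx^{(n)};X)\to K^{\bullet}(\xx^{(m)};X)$ on the right. Thus the two direct systems are identified, compatibly with the full zig-zag of quasi-isomorphisms built in the proof of \ref{coh-2} through the intermediate complexes $K^{\bullet}(\xx-\UU,\UU^{(n)};X[\UU])$ and $K^{\bullet}(\xx-\UU,\xx^{(n)};X[\UU])$. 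Since filtered direct limits are exact and so preserve quasi-isomorphisms, taking $\varinjlim$ of each complex in that zig-zag produces
\[
\varinjlim K^{\bullet}(\xx-\UU;X[\UU]/\UU^{(n)}X[\UU]) \simeq \varinjlim K^{\bullet}(\xx^{(n)};X) \cong \check{C}_{\xx}(X),
\]
the last isomorphism being \ref{comp-1} (C). Together with the preceding display this gives the asserted quasi-isomorphism; passing to cohomology, and again using that direct limits commute with cohomology, yields $H^i(\xx-\UU;X[\UU^{-1}]) \cong H^i(\check{C}_{\xx}(X))$ for all $i$, and the derived-category statement follows at once.

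The main obstacle I anticipate is bookkeeping rather than conceptual. One must check that the transition map in the presentation $X[\UU^{-1}] \cong \varinjlim X[\UU]/\UU^{(n)}X[\UU]$ is precisely multiplication by $U_1\cdots U_r$, so that it matches the map $\mu_{m-n}$ of \ref{coh-2}, and that the quasi-isomorphisms of \ref{coh-2} are natural in $n$ across all the intermediate complexes, so that the zig-zag survives the passage to the limit. Once this naturality is secured, exactness of the filtered colimit makes the limit step automatic, with no $\varprojlim^1$-type correction appearing — in pleasant contrast to the microscope/homology situation of \ref{weak-6}.
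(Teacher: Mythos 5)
Your proposal is correct and follows essentially the same route as the paper's proof: invoke Lemma \ref{coh-2} to identify the direct systems $\{K^{\bullet}(\xx-\UU;X[\UU]/\UU^{(n)}X[\UU])\}$ and $\{K^{\bullet}(\xx^{(n)};X)\}$ up to quasi-isomorphism, pass to the direct limit (which preserves quasi-isomorphisms), use $\varinjlim K^{\bullet}(\xx^{(n)};X)\cong \check{C}_{\xx}(X)$ from \ref{comp-1}~(C), and identify the other limit with $K^{\bullet}(\xx-\UU;X[\UU^{-1}])$ via the presentation $X[\UU^{-1}]\cong\varinjlim X[\UU]/\UU^{(n)}X[\UU]$ from \ref{coh-1}. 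Your added justifications (finite freeness of $K_{\bullet}(\xx-\UU;R[\UU])$ for commuting the Koszul functor with colimits, and the naturality of the zig-zag in $n$) are exactly the bookkeeping the paper leaves implicit.
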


\begin{proof}
	By \ref{coh-2} the direct systems $\{K^{\bullet}(\xx-\UU;X[\UU]/\UU^{(n)}X[\UU])\}$ and $\{K^{\bullet}(\xx^{(n)};X)\}$ are quasi-isomorphic. Since the direct limit preserves quasi-isomorphisms there is a quasi-isomorphism 
	\[
	\varinjlim K^{\bullet}(\xx-\UU;X[\UU]/\UU^{(n)}X[\UU]) \simeq \check{C}_{\xx}(X).
	\]
	In order to conclude we have to show that the direct limit at the left 
	is isomorphic to $K^{\bullet}(\xx-\UU;X[\UU^{-1}])$. But this follows by properties of Koszul complexes since 
	$X[\UU^{-1}] \cong \varinjlim X[\UU]/\UU^{(n)}X[\UU]$ with the multiplication map by $U_1^{m-n}\cdots U_r^{m-n}$ (see \ref{coh-1}).
	The additional statements follow immediately. 
\end{proof}

\begin{z} {\it Alternative proof of \ref{coh-3}.} \label{coh-4}
	(A) We consider the Koszul complex 
	\[
	K^{\bullet}(x-U;R[U^{-1}]):   0 \to R[U^{-1}] \stackrel{x-U}{\longrightarrow} R[U^{-1}] \to 0 
	\]
	for a single element $x \in R$ and $R[U^{-1}]$. Here $R[U^{-1}]$ is the system 
	of inverse polynomials in the variable $U$ as $R[U]$-module.
	As above we consider the $R$-complex $\mathcal{L}_x$ (see \ref{comp-4}) with the variable $U^{-1}$ instead of $U$. Then we construct an isomorphism of $R$-complexes
	\[
	\xymatrix{
		K^{\bullet}(x-U;R[U^{-1}]): \ar[d] & 0 \ar[r] & R[U^{-1}] \ar[r]^{x-U} \ar@{=}[d] & 
		R[U^{-1}] \ar[r] \ar[d]^{U^{-1}} & 0\\
		\mathcal{L}_x(R): & 0 \ar[r] & 	
		R[U^{-1}] \ar[r]^{\psi}  & 
		U^{-1} R[U^{-1}]\ar[r]  & 0,
	}
	\]
	where the left vertical map is multiplication by $U^{-1}$.
	Both of the vertical maps are $R$-isomorphisms and the diagram 
	is commutative as easily seen. Whence we get the isomorphism 
	$$\mathcal{L}_x(R) \cong K^{\bullet}(x-U;R[U^{-1}])$$ 
	and therefore 
	$\mathcal{L}_x(X) \cong  K^{\bullet}(x-U;X[U^{-1}])$ for an $R$-complex $X$. To this end recall that 
	\[
	\mathcal{L}_x(X) \cong \mathcal{L}_x(R) \otimes_RX \mbox{ and } 
	K^{\bullet}(x-U;X[U^{-1}]) \cong K^{\bullet}(x-U;R[U^{-1}]) \otimes_R X.
	\]
	because $R[U^{-1}] \otimes_R X \cong X[U^{-1}]$ for an $R$-complex $X$.\\
	(B) Let $\xx = x_1,\ldots,x_r$ a system of elements of $R$ and $y \in R$ 
	an element. Then we prove 
	$$
	\mathcal{L}_{\xx}(X) \cong K^{\bullet}(\xx-\UU;X[\UU^{-1}])
	$$ 
	by induction on the number of elements. 
	For $r = 1$ this is shown in (A). Now suppose the statement holds for $r$. 
	Then there are isomorphisms 
	\[
	\mathcal{L}_{\xx,y}(X) \cong \mathcal{L}_y(\mathcal{L}_{\xx}(X)) 
	\cong \mathcal{L}_y(K^{\bullet}(\xx-\UU;X[\UU])) \cong 
	K^{\bullet}(y-V;K^{\bullet}(\xx-\UU;X[\UU^{-1}])[V^{-1}]),
	\]	
	where $V$ denotes an additional variable. But now $R[\UU^{-1}]\otimes_R R[V^{-1}] \cong R[(\UU,V)^{-1}]$ and 
	$$
	K^{\bullet}(y-V,\xx-\UU;X[(\UU,V)^{-1}]) 
	\cong K^{\bullet}(\xx-\UU,y-V;X[(\UU,V)^{-1}])
	$$ 
	which completes the inductive step. Note that $\xx-\UU,y-V$ is the sequence $x_1-U_1,\ldots,x_r-U_r,y-V$.
	(C) 
\end{z}

\begin{remark} \label{coh-9}
	(A) Because of the fact that the natural morphism $\mathcal{L}_{\xx}(X) \to 
	\check{C}_{\xx}(X)$ is a quasi-isomorphism this provides another proof of 
	Theorem \ref{coh-3}. The original proof of Theorem \ref{coh-3} uses 
	Lemma \ref{coh-2} which contains additional information about various 
	Koszul complexes that might be of independent interest. \\
	(B) By \ref{coh-3} it follows that there is a quasi-isomorphism $K^{\bullet}(x-U;R[U^{-1}]) \to \check{C}_x$ for an element $x \in R$. By iteration this implies -- as mentioned before - a quasi-isomorphism $K^{\bullet}(\xx-\UU;R[\UU^{-1}]) \to \check{C}_{\xx}$ for a sequence $\xx = x_1,\ldots,x_r$ in $R$. Now note that $K^{\bullet}(\xx-\UU;R[\UU^{-1}])$ 
	is a bounded complex of free $R$-modules. Therefore the Koszul complex  $K^{\bullet}(\xx-\UU;R[\UU^{-1}])$ is a bounded $R$-free resolution of 
	the \v{C}ech complex $\check{C}_{\xx}$.
\end{remark}

In addition to the previous arguments we will add a direct argument in the case of a single element, the initial step. This is more or less a copy of \ref{comp-4}. We include here a direct proof slightly different from this of \ref{comp-4}.

\begin{lemma} \label{coh-8}
	Let $x \in R$ denote an element. Then there is a quasi-isomorphism 
	\[
	\xymatrix{
		K^{\bullet}(x-U;R[U^{-1}]): \ar[d] & 0 \ar[r] & R[U^{-1}] \ar[r]^{x-U} \ar[d]^{f_0} & R[U^{-1}] \ar[r] \ar[d]^{f_1} & 0\\
		\check{C}_x(R): & 0 \ar[r] & 	
		R \ar[r]^{\iota_x}  & 
		R_x \ar[r]  & 0,
	}
	\]
	defined by the maps 
	\[
	f_0 : R[U^{-1}] \to R, \; \textstyle{\sum}_{i=0}^{n} r_i U^{-i} \mapsto r_0, 
	\quad \mbox{ and } \quad 
	f_1 : R[U^{-1}] \to R_x, \; f(U^{-1}) \mapsto f(x^{-1})/x.
	\]
\end{lemma}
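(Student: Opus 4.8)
The plan is to verify first that the displayed diagram is a genuine morphism of $R$-complexes, and then that it induces isomorphisms on $H^0$ and on $H^1$; since both complexes are concentrated in degrees $0$ and $1$, this already yields the quasi-isomorphism. Writing a typical element of $R[U^{-1}]$ as $g = \sum_{i=0}^n r_i U^{-i}$, the $R[U]$-module structure of the inverse polynomials (see \ref{coh-1}) gives $(x-U)g = \sum_{i \geq 0}(x r_i - r_{i+1}) U^{-i}$ with the convention $r_{n+1}=0$. Applying $f_1$, which substitutes $U^{-1}\mapsto x^{-1}$ and divides by $x$, the resulting sum $\sum_{i\geq 0}(x r_i - r_{i+1}) x^{-i-1}$ telescopes to $r_0$, so that $f_1((x-U)g) = r_0/1 = \iota_x(f_0(g))$. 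This is exactly the commutativity $\iota_x\cdot f_0 = f_1\cdot(x-U)$.

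For $H^0$ I would compute $\Ker(x-U)$ directly. The relations $r_{i+1} = x r_i$ force $r_i = x^i r_0$, and since $g$ is a finite sum we must have $x^{n+1} r_0 = 0$; conversely any such $r_0$ produces an element of the kernel. Hence $f_0$ restricts to a bijection from $\Ker(x-U)$ onto the $xR$-torsion submodule of $R$, which is precisely $\Ker \iota_x$, giving the isomorphism on $H^0$. For $H^1$ the surjectivity of the induced map $\Coker(x-U)\to \Coker\iota_x$ is immediate, because $f_1(s U^{-(m-1)}) = s/x^m$ and every element of $R_x$ has this form.

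The injectivity on $H^1$ is the \emph{main obstacle}, and I would treat it by reduction modulo $\im(x-U)$, in the spirit of \ref{comp-5}. The two congruences driving the argument live in $\Coker(x-U)$: from $(x-U)U^{-(i+1)} = x U^{-(i+1)} - U^{-i}$ one gets $U^{-i}\equiv x U^{-(i+1)}$, hence $x^N U^{-N}\equiv 1$ for all $N$; and since $U\cdot 1 = 0$ one has $x\cdot 1 = (x-U)(1)\in\im(x-U)$. Using the first congruence repeatedly, every $g$ reduces to a single monomial, $g\equiv c U^{-n}$ with $c=\sum_{i=0}^n r_i x^{n-i}$.

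It then remains to show that $f_1(g)\in\iota_x(R)$ forces $c U^{-n}\in\im(x-U)$. The hypothesis gives $c/x^{n+1}=s/1$ in $R_x$ for some $s\in R$, that is $x^k c = x^{n+1+k}s$ for some $k$. Pushing the monomial up and substituting,
\[
c U^{-n}\equiv x^k c\,U^{-(n+k)} = x^{n+1+k}s\,U^{-(n+k)} = s x\,\bigl(x^{n+k}U^{-(n+k)}\bigr)\equiv s x = s\,(x\cdot 1)\equiv 0,
\]
the last step using that $\im(x-U)$ is an $R$-submodule containing $x\cdot 1$. Thus $g\in\im(x-U)$, the map on cokernels is injective, and the morphism is a quasi-isomorphism. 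The only point requiring care is that the torsion exponent $k$ supplied by the equality in $R_x$ be absorbed correctly in this final chain of congruences; everything else is the same telescoping bookkeeping already carried out in \ref{comp-5}.
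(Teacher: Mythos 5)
Your proof is correct, and its overall skeleton (verify the square commutes, then check that the induced maps on $H^0$ and $H^1$ are isomorphisms) coincides with the paper's; the genuine difference is in the crux, the injectivity on $H^1$. The paper proves injectivity of $\Coker(x-U)\to R_x/\iota_x(R)$ by explicitly solving $(x-U)\,s(U^{-1})=r(U^{-1})$: from the relation $x^k(x^{n+1}r_{-1}+x^nr_0+\cdots+r_n)=0$ it defines $s_i=-\sum_{j=0}^{i}x^{i-j}r_{j-1}$ for $i\le n+1$ and $s_i=xs_{i-1}$ for $n+2\le i\le n+k+1$, then checks directly that this preimage works, the torsion exponent $k$ being absorbed in the tail of $s(U^{-1})$. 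You instead compute in the quotient $R[U^{-1}]/(x-U)R[U^{-1}]$ using the two congruences $U^{-i}\equiv xU^{-(i+1)}$ and $x\cdot 1=(x-U)(1)\equiv 0$, reduce any class to a single monomial $cU^{-n}$, and kill it via $x^kc=x^{n+1+k}s$; since $\im(x-U)$ is an $R$-submodule (indeed an $R[U]$-submodule), multiplying congruences by ring elements is legitimate, so every step stands. In effect you have transplanted to the inverse-polynomial setting exactly the technique the paper itself uses for Lemma \ref{comp-5} (where $sU^n\equiv x^ksU^{n+k}\bmod \im\psi$ finishes the argument), which makes the proofs of \ref{comp-5} and \ref{coh-8} uniform and avoids solving for the coefficients $s_i$; what the paper's route buys instead is a closed formula for the preimage, at the cost of heavier bookkeeping. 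Both arguments correctly absorb the exponent $k$ supplied by the equality in $R_x$ — the one delicate point, which you rightly flag.
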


\begin{proof}
	First note that $f_1 \cdot \mu_{x-U} =  \iota_x \cdot f_0$ as easily seen. 
	Therefore there are homomorphisms 
	\[
	g_0 : \Ker \mu_{x-U} \to \Ker \iota_x  \quad \mbox{ and } \quad
	g_1 : R[U^{-1}]/(x-U) R[U^{-1}] \to R_x/\iota_x(R).
	\]
	The proof that $g_0$ is an isomorphism follows in a similar way  
	as in the proof of the isomorphism $g_0$ in \ref{comp-4}. Whence we omit 
	the details. 
	
	Now let $r/x^n \in R_x \setminus \iota_x(R)$. Then 
	$r U^{-n+1} + (x-U)R[U^{-1}]$ is mapped to $r/x^n$ so that $g_1$ is onto. 
	Assume now that $r(U^{-1}) +(x-U)R[U^{-1}]$ with $r(U^{-1}) = \sum_{i=0}^{n} r_iU^{-i}$ 
	is mapped to zero under $g_1$. Then it follows $\sum\limits_{i=-1}^{n}r_i/x^{i+1} = 0$ 
	for a certain $r_{-1}/1 \in \iota_x(R)$. So we have  that 
	$x^k(x^{n+1}r_{-1}+ x^nr_0 + \ldots + r_n) = 0$ for a certain integer $k \geq 0$. We define elements 
	$s_i = -(\sum_{j=0}^{i} x^{i-j}r_{j-1})$ for $i = 0,\ldots,n+1$ and 
	$s_i = x s_{i-1}$ for $i = n+2,\ldots,n+k+1$. It follows that 
	$$
	xs_i-s_{i+1} = r_i,\; i= 0,\ldots,n,\; xs_i -s_{i+1} = 0, \; 
	i = n+1,\ldots,n+k, \mbox{ and } x s_{n+k+1} = x^k s_{n+1} =0.
	$$
	Finally we put $s(U^{-1}) = \sum_{i=0}^{n+k+1}s_i U^{-1}$, 
	and it yields that $r(U^{-1}) = (x-U)\cdot s(U^{-1})$ by the choice of 
	the $s_i$'s. Therefore $g_1$ is an isomorphism. 
\end{proof}

\begin{remark} \label{coh-7}
	Let $X \to Y$ denote a quasi-isomorphism of two $R$-complexes. Then there is 
	also a quasi-isomorphism
	\[
	\mathcal{L}_{\xx}  \otimes_R X \cong \mathcal{L}_{\xx}(X) \to 
	\check{C}_{\xx} \otimes_R Y \cong \check{C}_{\xx}(Y). 
	\]
	This follows since $\mathcal{L}_{\xx} \to \check{C}_{\xx}$ is a quasi-isomorphism of bounded complexes of flat $R$-modules. The functor $\Hom_R(\check{C}_{\xx},\cdot)$ does not preserves quasi-isomorphisms, 
	while it is correct for $$\Hom_R(\mathcal{L}_{\xx}(R),\cdot) \cong 
	\Hom_R(K^{\bullet}(\xx-\UU;R[\UU^{-1}]), \cdot)$$ since 
	$\mathcal{L}_{\xx}(R) \cong K^{\bullet}(\xx-\UU;R[\UU^{-1}])$ is an isomorphism of bounded complexes of free $R$-modules.
\end{remark}

As an application there is a dual statement to \ref{hoc-3}.

\begin{corollary} \label{coh-5}
	For an $R$-module $X$ and a system of elements $\xx = x_1,\ldots,x_r$ we have the isomorphisms
	\begin{itemize}
		\item[(a)] $H^i(\check{C}_{\xx}(X)) \cong \Tor^{R[\UU]}_{ r-i}(R[\UU]/(\xx-\UU)R[\UU], X[\UU^{-1}])$,
		\item[(b)] $H^i(\check{C}_{\xx}(X)) \cong \Ext_{R[\UU]}^i(R[\UU]/(\xx-\UU)R[\UU], X[\UU^{-1}])$
	\end{itemize}
	for all $i \in \mathbb{Z}$.
\end{corollary}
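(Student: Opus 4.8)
The plan is to combine Theorem \ref{coh-3} with the interpretation of the Koszul co-complex through a free resolution over $R[\UU]$, thereby dualising the argument of \ref{hoc-3}. First I would invoke Theorem \ref{coh-3}, which already furnishes the isomorphism $H^i(\check{C}_{\xx}(X)) \cong H^i(\xx-\UU;X[\UU^{-1}])$ for all $i \in \mathbb{Z}$. This reduces both (a) and (b) to computing the cohomology of the Koszul co-complex $K^{\bullet}(\xx-\UU;X[\UU^{-1}])$ in terms of derived functors over the polynomial ring $R[\UU]$.

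For (b) I would rewrite the co-complex, using the definition in \ref{prel-3}(A), as $K^{\bullet}(\xx-\UU;X[\UU^{-1}]) \cong \Hom_{R[\UU]}(K_{\bullet}(\xx-\UU;R[\UU]), X[\UU^{-1}])$. Since $\xx-\UU = x_1-U_1,\ldots,x_r-U_r$ forms an $R[\UU]$-regular sequence, the Koszul complex $K_{\bullet}(\xx-\UU;R[\UU])$ is a bounded free $R[\UU]$-resolution of $R[\UU]/(\xx-\UU)R[\UU]$. Hence the cohomology of $\Hom_{R[\UU]}(K_{\bullet}(\xx-\UU;R[\UU]), X[\UU^{-1}])$ computes $\Ext_{R[\UU]}^{i}(R[\UU]/(\xx-\UU)R[\UU], X[\UU^{-1}])$, which is (b).

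For (a) I would pass to Koszul homology via the self-duality of the Koszul complex over $R[\UU]$, which gives $H^i(\xx-\UU;X[\UU^{-1}]) \cong H_{r-i}(\xx-\UU;X[\UU^{-1}])$. Writing $K_{\bullet}(\xx-\UU;X[\UU^{-1}]) \cong K_{\bullet}(\xx-\UU;R[\UU]) \otimes_{R[\UU]} X[\UU^{-1}]$ (again by \ref{prel-3}(A)) and using once more that $K_{\bullet}(\xx-\UU;R[\UU])$ resolves $R[\UU]/(\xx-\UU)R[\UU]$, this homology is precisely $\Tor^{R[\UU]}_{r-i}(R[\UU]/(\xx-\UU)R[\UU], X[\UU^{-1}])$, which is (a).

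I do not expect a genuine obstacle, since the substantive content is already carried by Theorem \ref{coh-3}. The only points requiring care are that $X$ is a module, so that $X[\UU^{-1}]$ is an honest $R[\UU]$-module and $\Ext$, $\Tor$ are the ordinary derived functors with no complications from unbounded complexes, and that $K_{\bullet}(\xx-\UU;R[\UU])$ may legitimately be used on either side of $\Hom$ and $\otimes$ because it is a bounded complex of free modules resolving $R[\UU]/(\xx-\UU)R[\UU]$.
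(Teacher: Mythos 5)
Your proposal is correct and follows essentially the same route as the paper: both reduce to the Koszul co-complex via Theorem \ref{coh-3} (which the paper's proof cites, with a typo, as \ref{hoc-3}), then use that $\xx-\UU$ is an $R[\UU]$-regular sequence so that the Koszul complex is a bounded free resolution of $R[\UU]/(\xx-\UU)R[\UU]$, and invoke self-duality of the Koszul complex to obtain both the $\Tor$ and $\Ext$ descriptions. Your write-up merely unpacks the paper's final sentence ("the results follow by the definitions and the self-duality of the Koszul complex") into the explicit $\Hom$- and $\otimes$-identifications, which is a faithful elaboration rather than a different argument.
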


\begin{proof}
	By \ref{hoc-3} there are isomorphisms $H^i(\xx-\UU;X[\UU^{-1}]) \cong H^i(\check{C}_{\xx}(X))$ for all $i \in \mathbb{Z}$. For the 
	Koszul complex there is an isomorphism 
	\[
	K^{\bullet}(\xx-\UU;X[\UU^{-1}]) \cong K^{\bullet}(\xx-\UU;R[\UU]) \otimes_{R[\UU]} X[\UU^{-1}].
	\]
	Now $\xx-\UU$ is an $R[\UU]$ regular sequence and $K^{\bullet}(\xx-\UU;R[\UU])$ is an $R[\UU]$-free resolution of 
	$H^r(\xx-\UU;R[\UU]) \cong R[\UU]/(\xx-\UU)R[\UU]$. Then the results follow 
	by the definitions and the self-duality of the Koszul complex.
\end{proof}

Under some additional assumptions about the sequence $\xx$ we get an 
expresion of the local cohomology modules as Koszul cohomology. 

\begin{corollary} \label{coh-6}
	Let $\xx = x_1,\ldots,x_r$ denote a weakly pro-regular sequence 
	and $\mathfrak{a} = \xx R$. Then there are isomorphisms 
	\[
	H^i(\xx-\UU;X[\UU^{-1}]) \cong H^i_{\mathfrak{a}}(X)
	\]
	for all $i \in \mathbb{Z}$. More general $K^{\bullet}(\xx-\UU; X[[\UU^{-1}]])$ 
	is a representative of ${\rm R} \Gamma_{\mathfrak{a}}(X)$ in the derived 
	category.
\end{corollary}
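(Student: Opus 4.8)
The plan is to chain together two facts that are already at our disposal, so that no fresh computation is required. First I would invoke Theorem \ref{coh-3}, which for an arbitrary $R$-complex $X$ provides a quasi-isomorphism $K^{\bullet}(\xx-\UU;X[\UU^{-1}]) \simeq \check{C}_{\xx}(X)$ together with the induced isomorphisms $H^i(\xx-\UU;X[\UU^{-1}]) \cong H^i(\check{C}_{\xx}(X))$ for all $i \in \mathbb{Z}$. On the level of the derived category this says precisely that the Koszul co-complex $K^{\bullet}(\xx-\UU;X[\UU^{-1}])$ is a representative of $\check{C}_{\xx}(X)$. Crucially, this first step holds for \emph{every} system of elements $\xx$, independently of any regularity hypothesis.

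Second, I would bring in the assumption that $\xx$ is weakly pro-regular. As recalled in the remarks following Corollary \ref{weak-4}, in the case $M = R$ one has $\Gamma_{\mathfrak{a}}(I) \cong \check{C}_{\xx}(I)$ for every injective $R$-module $I$, and this forces $\check{C}_{\xx}(X)$ to be a representative of ${\rm R}\Gamma_{\mathfrak{a}}(X)$ in the derived category. In particular one obtains $H^i(\check{C}_{\xx}(X)) \cong H^i_{\mathfrak{a}}(X)$ for all $i \in \mathbb{Z}$, where $H^i_{\mathfrak{a}}(X)$ denotes the $i$-th local cohomology, i.e.\ the cohomology of the right derived section functor.

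Combining the two identifications yields the asserted isomorphisms $H^i(\xx-\UU;X[\UU^{-1}]) \cong H^i_{\mathfrak{a}}(X)$ for all $i \in \mathbb{Z}$, and, in the derived category, that $K^{\bullet}(\xx-\UU;X[\UU^{-1}])$ represents ${\rm R}\Gamma_{\mathfrak{a}}(X)$. I do not expect any genuine obstacle here: the corollary is essentially a transitivity statement in the derived category, and the proof amounts to observing that the two quasi-isomorphisms compose. The only place where the hypothesis is actually used is the passage from $\check{C}_{\xx}(X)$ to ${\rm R}\Gamma_{\mathfrak{a}}(X)$ in the second step, since it is precisely weak pro-regularity that guarantees the \v{C}ech complex computes the derived torsion functor.
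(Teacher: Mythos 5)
Your proof is correct and takes essentially the same route as the paper's: weak pro-regularity makes $\check{C}_{\xx}(X)$ a representative of ${\rm R}\Gamma_{\mathfrak{a}}(X)$, and Theorem \ref{coh-3} supplies the quasi-isomorphism identifying $K^{\bullet}(\xx-\UU;X[\UU^{-1}])$ with $\check{C}_{\xx}(X)$, so the two facts compose. If anything, your citation is the more accurate one, since the paper's proof points to \ref{coh-5} where the step actually needed is Theorem \ref{coh-3}.
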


\begin{proof}
	If $\xx$ is weakly pro-regular we know that $\check{C}_{\xx}(X)$ is a representative of ${\rm R} \Gamma_{\mathfrak{a}}(X)$ (see e.g.\cite{SS}). Then the claims are a consequence of \ref{coh-5}.
\end{proof}

\section{Duality}
There is a well-known duality between Koszul complexes and 
Koszul co-complexes. In a certain sense there is an extension to the 
polynomial Koszul (co-)complexes as investigated above.

As above let $\xx = x_1,\ldots,x_r$ denote a system of elements of a 
commutative ring $R$. We consider the polynomial ring $R[\UU]$ in $R$ variables 
with the sequence $\xx - \UU = x_1-U_1,\ldots,x_r-U_r$. Moreover, $R[\UU^{-1}]$ 
denotes Macaulay's system of inverse polynomials. At the beginning we need 
a technical result concerning the inverse system.

\begin{proposition} \label{dual-0}
	Fix the previous notation. Then there is an isomorphism of $R[\UU]$-complexes 
	\[
	\Hom_R(R[\UU^{-1}],X) \cong X[[\UU]]
	\]
	for any $R$-complex $X$. 
\end{proposition}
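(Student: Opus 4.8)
The plan is to exhibit an explicit degree-wise $R$-isomorphism and then check it is compatible with the $R[\UU]$-action. Write $U^{-\underline i} := U_1^{-i_1}\cdots U_r^{-i_r}$ for $\underline i = (i_1,\ldots,i_r) \in \mathbb{N}^r$; by \ref{coh-1} (B) these inverse monomials form an $R$-basis of $R[\UU^{-1}]$. Hence $R[\UU^{-1}]$ is a free $R$-module, so (working degree-wise on the complex $X$) an $R$-homomorphism $\phi : R[\UU^{-1}] \to X$ is determined by the family of its values $(\phi(U^{-\underline i}))_{\underline i \in \mathbb{N}^r}$, giving a natural identification $\Hom_R(R[\UU^{-1}],X) \cong X^{\mathbb{N}^r}$. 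On the other side, by \ref{prel-6} one has $X[[\UU]] \cong X^{\mathbb{N}^r}$ via $\sum_{\underline i} c_{\underline i}\, U^{\underline i} \leftrightarrow (c_{\underline i})$, where $U^{\underline i} := U_1^{i_1}\cdots U_r^{i_r}$.

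Accordingly I would define $\Theta : \Hom_R(R[\UU^{-1}],X) \to X[[\UU]]$ by
\[
\Theta(\phi) = \sum_{\underline i \in \mathbb{N}^r} \phi(U^{-\underline i})\, U^{\underline i}.
\]
Under the two identifications above this is nothing but the identity of $X^{\mathbb{N}^r}$ (re-indexing the $\underline i$-th slot to the coefficient of $U^{\underline i}$), so it is an $R$-isomorphism in each homological degree; and since $R[\UU^{-1}]$ is concentrated in degree $0$, the differentials on $\Hom_R(R[\UU^{-1}],X)$ and on $X[[\UU]]$ are both induced coefficient-wise by $d_X$, whence $\Theta$ commutes with them. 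Thus $\Theta$ is an isomorphism of $R$-complexes, and it remains only to match the $R[\UU]$-module structures.

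The key step --- and essentially the only one requiring a computation --- is $R[\UU]$-linearity. Recall that $\Hom_R(R[\UU^{-1}],X)$ carries its $R[\UU]$-structure through the source, i.e. $(U_k\cdot\phi)(m) = \phi(U_k\cdot m)$, and that by \ref{coh-1} (B) one has $U_k\cdot U^{-\underline i} = U^{-(\underline i - e_k)}$ when $i_k \geq 1$ and $U_k \cdot U^{-\underline i} = 0$ when $i_k = 0$, where $e_k$ is the $k$-th unit vector. Therefore
\[
\Theta(U_k\cdot\phi) = \sum_{\underline i\,:\, i_k \geq 1} \phi(U^{-(\underline i - e_k)})\, U^{\underline i} = \sum_{\underline j \in \mathbb{N}^r} \phi(U^{-\underline j})\, U_k\, U^{\underline j} = U_k\cdot \Theta(\phi),
\]
the middle equality being the substitution $\underline j = \underline i - e_k$ (the boundary terms $i_k = 0$ drop out precisely because $U_k$ annihilates them). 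Since $\Theta$ is already $R$-linear and $R[\UU]$ is generated as an $R$-algebra by $U_1,\ldots,U_r$, this yields $R[\UU]$-linearity of $\Theta$, hence the asserted isomorphism of $R[\UU]$-complexes.

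I expect the main (indeed nearly the only) obstacle to be the bookkeeping in this last display: one must keep track of the fact that the $R[\UU]$-action on $R[\UU^{-1}]$ lowers exponents and kills the terms that are constant in $U_k$, so that the index shift $\underline i \mapsto \underline i - e_k$ converts multiplication by $U_k$ on the target $X[[\UU]]$ into the dual action on the source. The degree-wise identification and the compatibility with differentials are routine once $R[\UU^{-1}]$ is recognised as $R$-free.
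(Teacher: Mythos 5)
Your proof is correct and is essentially the paper's own argument: the map $\Theta$ you define is exactly the isomorphism $\Phi(f) = \sum_{\underline i} f(U_1^{-i_1}\cdots U_r^{-i_r})\,U_1^{i_1}\cdots U_r^{i_r}$ used in the paper, with the only cosmetic difference that you verify $R[\UU]$-linearity on the algebra generators $U_1,\ldots,U_r$ while the paper checks it against an arbitrary monomial $aU_1^{j_1}\cdots U_r^{j_r}$. Both then conclude by the same degree-wise extension to the complex $X$.
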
 

\begin{proof}
	Let $M$ denote an $R$-module. 
	First of all we define a map $\Phi : \Hom_R(R[\UU^{-1}], M) \to M[[\UU]]$ 
	by means of 
	\[
	\Phi(f) = \sum_{i_1,\ldots,i_r \geq 0} f(U_1^{-i_1}\cdots U_r^{-i_r}) U_1^{i_1}\cdots U_r^{i_r}
	\]
	for $f \in \Hom_R(R[\UU^{-1}], M)$. Evidently $\Phi$ is an isomorphism 
	of $R$-modules. Because $R[\UU^{-1}]$ is an $R[\UU]$ module it follows that $\Hom_R(R[\UU^{-1}], M)$ is an $R[\UU]$-module with 
	\[
	((a U_1^{j_1}\cdots U_r^{j_r}) f)(V) = f(a U_1^{j_1}\cdots U_r^{j_r} V)
	\]
	for $ a \in R, U_1^{j_1}\cdots U_r^{j_r} \in R[\UU],$ and $V \in R[\UU^{-1}]$. Thus 
	\[
	\Phi((a U_1^{j_1}\cdots U_r^{j_r})f) = \sum
	a f(U_1^{j_1-i_1}\cdots U_r^{j_r-i_r})U_1^{i_1} \cdots U_r^{i–r} = 
	(a U_1^{j_1} \cdots U_r^{j_r}) \Phi(f).
	\]
	Therefore the map $\Phi$ is an homomorphism of $R[\UU]$-modules where 
	the structure of $M[[\UU]]$ as $R[\UU]$-module is the natural one. Let $m(\UU) = \sum m_{i_1\ldots i_r} U_1^{i_1}\cdots U_r^{i_r} \in M[[\UU]]$. It defines $f(m) \in \Hom_R(R[\UU^{-1}],M)$ 
	by $f(m)(U_1^{-i_1}\cdots U_r^{-i_r}) = m_{i_1\ldots i_r}$. Therefore, it becomes an isomorphism of $R[\UU]$-modules. 
	
	In order to finish the claim note that the isomorphism $\Phi$ extents 
	degree-wise to an $R$-complex $X$.
\end{proof}


\begin{theorem} \label{dual-1}
	With the previous notations there is an isomorphism 
	\[
	K_{\bullet}(\xx-\UU;X[[\UU]]) \cong \Hom_R(K^{\bullet}(\xx-\UU;R[\UU^{-1}]),X)
	\]
	for any $R$-module $X$. 
\end{theorem}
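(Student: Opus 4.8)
The plan is to deduce the statement from the two structural isomorphisms already established, namely the identification of $\mathcal{L}_{\xx}$ with a Koszul co-complex of inverse polynomials and the identification of $\Hom_R(\mathcal{L}_{\xx},-)$ with a Koszul complex of formal power series. Concretely, setting $X=R$ in \ref{coh-4}~(B) gives an isomorphism of bounded complexes of free $R$-modules
\[
\mathcal{L}_{\xx} \cong K^{\bullet}(\xx-\UU;R[\UU^{-1}]).
\]
Applying the contravariant functor $\Hom_R(-,X)$ to this isomorphism yields
\[
\Hom_R(K^{\bullet}(\xx-\UU;R[\UU^{-1}]),X) \cong \Hom_R(\mathcal{L}_{\xx},X),
\]
and the right-hand side is isomorphic to $K_{\bullet}(\xx-\UU;X[[\UU]])$ by \ref{hoc-1}~(B), which is exactly Theorem \ref{int-1}. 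Chaining the two isomorphisms gives the claim for any $R$-module $X$ (indeed for any $R$-complex $X$). The only point to check is that the isomorphism of \ref{coh-4} is genuinely an isomorphism of complexes, so that it survives $\Hom_R(-,X)$; this is already part of its statement.

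I would, however, prefer to record the more transparent \emph{duality} argument that uses the freshly proved Proposition \ref{dual-0}, since it is in the spirit of this section and generalises at once to Theorem \ref{int-5} (of which the present statement is the case $X=R$). First I would write
\[
K^{\bullet}(\xx-\UU;R[\UU^{-1}]) \cong K^{\bullet}(\xx-\UU;R[\UU]) \otimes_{R[\UU]} R[\UU^{-1}],
\]
where $K^{\bullet}(\xx-\UU;R[\UU])$ is a bounded complex of finitely generated free $R[\UU]$-modules. The tensor--hom adjunction for the $R$-algebra $R[\UU]$ then gives a degree-wise isomorphism
\[
\Hom_R(K^{\bullet}(\xx-\UU;R[\UU]) \otimes_{R[\UU]} R[\UU^{-1}],X) \cong \Hom_{R[\UU]}(K^{\bullet}(\xx-\UU;R[\UU]),\Hom_R(R[\UU^{-1}],X)),
\]
where $\Hom_R(R[\UU^{-1}],X)$ carries its natural $R[\UU]$-module structure.

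Next I would substitute $\Hom_R(R[\UU^{-1}],X) \cong X[[\UU]]$ by Proposition \ref{dual-0}, reaching $\Hom_{R[\UU]}(K^{\bullet}(\xx-\UU;R[\UU]),X[[\UU]])$. Finally, since $K^{\bullet}(\xx-\UU;R[\UU]) = \Hom_{R[\UU]}(K_{\bullet}(\xx-\UU;R[\UU]),R[\UU])$ is the $R[\UU]$-dual of a bounded complex of finitely generated free $R[\UU]$-modules (see \ref{prel-3}), dualising once more converts this $\Hom$ into the tensor product
\[
K_{\bullet}(\xx-\UU;R[\UU]) \otimes_{R[\UU]} X[[\UU]] \cong K_{\bullet}(\xx-\UU;X[[\UU]]),
\]
which is the left-hand side. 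I expect the main obstacle to be purely bookkeeping: keeping the homological versus cohomological gradings (and the shift inherent in the self-duality of the Koszul complex) aligned, and checking that under all these identifications the differential is throughout multiplication by $\xx-\UU$. This last point is harmless because each $x_i-U_i$ is a central element of $R[\UU]$, so its transpose under every $\Hom$ above is again multiplication by $x_i-U_i$; consequently the two sides agree not merely module-wise but as complexes.
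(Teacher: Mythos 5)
Both of your arguments are correct, and your second (``duality'') argument is essentially the paper's own proof, run in the opposite direction: the paper starts from the self-duality identification $K_{\bullet}(\xx-\UU;X[[\UU]]) \cong \Hom_{R[\UU]}(K^{\bullet}(\xx-\UU;R[\UU]),X[[\UU]])$, then substitutes $X[[\UU]] \cong \Hom_R(R[\UU^{-1}],X)$ via Proposition \ref{dual-0}, and finally applies tensor--hom adjunction together with $K^{\bullet}(\xx-\UU;R[\UU^{-1}]) \cong K^{\bullet}(\xx-\UU;R[\UU]) \otimes_{R[\UU]} R[\UU^{-1}]$, whereas you traverse the same three isomorphisms starting from the other end. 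Your first argument, by contrast, is a genuinely different shortcut that the paper does not take: apply $\Hom_R(-,X)$ to the isomorphism of bounded complexes of free $R$-modules $\mathcal{L}_{\xx} \cong K^{\bullet}(\xx-\UU;R[\UU^{-1}])$ from \ref{coh-4} (see also \ref{coh-7}), and then invoke $\Hom_R(\mathcal{L}_{\xx},X) \cong K_{\bullet}(\xx-\UU;X[[\UU]])$ from \ref{hoc-1}. This is non-circular (both inputs are established in Sections 4 and 6, before the duality section) and shorter, at the price of resting on the explicit computational isomorphism of \ref{coh-4}; the paper's route has the advantage of being self-contained within Section 7 and of extending verbatim to the two-complex statements \ref{dual-2} and \ref{dual-6}, exactly as you anticipate. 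Your closing remark about gradings is also the right justification: since each $x_i-U_i$ is central, its transpose under every $\Hom$ involved is again multiplication by $x_i-U_i$, so all identifications are isomorphisms of complexes and no hidden shift intervenes.
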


\begin{proof}
	By the definition of Koszul complexes there is an isomorphism of $R[\UU]$-complexes 
	\[
	K_{\bullet}(\xx-\UU;X[[\UU]]) \cong \Hom_{R[\UU]}(K^{\bullet}(\xx-\UU;R[\UU]),X[[\UU]]).
	\]
	By view of \ref{dual-0} the second complex is isomorphic to 
	$\Hom_{R[\UU]}(K^{\bullet}(\xx-\UU;R[\UU]), \Hom_R(R[\UU^{-1}],X)$. 
	By adjointness and the definition of Koszul cohomology it provides 
	an isomorphism 
	\[
	K_{\bullet}(\xx-\UU;X[[\UU]]) \cong \Hom_R(K^{\bullet}(\xx-\UU;R[\UU^{-1}]),X),
	\]
	which completes the proof.
\end{proof}

A second duality result is the following. It is a consequence of the 
previous statement.

\begin{corollary} \label{dual-2}
	With the previous notation there is an isomorphism 
	\[
	\Hom_R(K^{\bullet}(\xx-\UU;X[\UU^{-1}]),Y) \cong K_{\bullet}(\xx-\UU;\Hom_R(X,Y)[[\UU]])
	\]
	for any two $R$-complexes $X$ and $Y$.
\end{corollary}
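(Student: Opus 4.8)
The plan is to reduce the assertion to Theorem \ref{dual-1} by means of the standard Hom-tensor adjunction. First I would record the ``base change'' isomorphism
\[
K^{\bullet}(\xx-\UU;X[\UU^{-1}]) \cong K^{\bullet}(\xx-\UU;R[\UU^{-1}]) \otimes_R X,
\]
valid for any $R$-complex $X$. Indeed, by \ref{coh-1} (C) we have $X[\UU^{-1}] \cong R[\UU^{-1}] \otimes_R X$, and since $K^{\bullet}(\xx-\UU;R[\UU])$ is a bounded complex of finitely generated free $R[\UU]$-modules, the functor $K^{\bullet}(\xx-\UU;-) \cong K^{\bullet}(\xx-\UU;R[\UU]) \otimes_{R[\UU]}(-)$ commutes with the tensor product by $X$. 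This is precisely the isomorphism already exploited in \ref{coh-4} (A) for a single element and propagated inductively in \ref{coh-4} (B), where $\mathcal{L}_{\xx}(X) = \mathcal{L}_{\xx}\otimes_R X \cong K^{\bullet}(\xx-\UU;X[\UU^{-1}])$.

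Next I would apply the contravariant functor $\Hom_R(-,Y)$ to this isomorphism and invoke Hom-tensor adjointness for $R$-complexes,
\[
\Hom_R\bigl(K^{\bullet}(\xx-\UU;R[\UU^{-1}]) \otimes_R X,\, Y\bigr) \cong \Hom_R\bigl(K^{\bullet}(\xx-\UU;R[\UU^{-1}]),\, \Hom_R(X,Y)\bigr),
\]
where $\Hom_R(X,Y)$ denotes the total Hom-complex. Combining the two displays gives
\[
\Hom_R(K^{\bullet}(\xx-\UU;X[\UU^{-1}]),Y) \cong \Hom_R\bigl(K^{\bullet}(\xx-\UU;R[\UU^{-1}]),\, \Hom_R(X,Y)\bigr).
\]

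Finally I would apply Theorem \ref{dual-1} with the complex $\Hom_R(X,Y)$ substituted for its ``$X$''. Although \ref{dual-1} is phrased for an $R$-module, its proof is assembled from the isomorphism of \ref{dual-0} and Hom-tensor adjointness, both of which extend degree-wise to an arbitrary $R$-complex; hence
\[
\Hom_R\bigl(K^{\bullet}(\xx-\UU;R[\UU^{-1}]),\, \Hom_R(X,Y)\bigr) \cong K_{\bullet}(\xx-\UU;\Hom_R(X,Y)[[\UU]]).
\]
Chaining the three isomorphisms yields the claim.

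The main obstacle I anticipate is bookkeeping rather than conceptual: one must check that all three isomorphisms are compatible with the differentials and the sign conventions of the total Hom- and tensor-complexes, and in particular that Theorem \ref{dual-1} genuinely passes from modules to complexes degree-wise without introducing a spurious degree shift. Because each constituent isomorphism is natural and the complexes involved are bounded and degree-wise free over $R[\UU]$, no boundedness or convergence issues intervene, so this verification should be routine.
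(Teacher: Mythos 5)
Your proposal is correct and follows essentially the same route as the paper's own proof: both rest on the identification $X[\UU^{-1}] \cong R[\UU^{-1}]\otimes_R X$ from \ref{coh-1}, Hom-tensor adjointness, and an application of Theorem \ref{dual-1} to the complex $\Hom_R(X,Y)$, merely chained in the opposite order. Your extra remark that \ref{dual-1} extends degree-wise from modules to complexes is a sound observation (the paper uses this implicitly, and its proof via \ref{dual-0} already works at the level of complexes).
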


\begin{proof} 
	We apply the isomorphism of \ref{dual-1} to the $R$-complex $\Hom_R(X,Y)$. 
	It provides an isomorphism 
	\[
	\Hom_R(K^{\bullet}(\xx-\UU;R[\UU^{-1}]),\Hom_R(X,Y)) \cong
	K_{\bullet}(\xx-\UU;\Hom_R(X,Y)[[\UU]]).
	\]
	For the proof we have to investigate the first complex. By adjointness 
	there is an isomorphism 
	\[
	\Hom_R(K^{\bullet}(\xx-\UU;R[\UU^{-1}]),\Hom_R(X,Y)) \cong 
	\Hom_R(K^{\bullet}(\xx-\UU;R[\UU^{-1}]) \otimes_R X,Y).
	\]
	By view of \ref{coh-1} there is an isomorphism $R[\UU^{-1}] \otimes_R X \cong X[\UU^{-1}]$. Whence we get the following isomorphism 
	$K^{\bullet}(\xx-\UU;R[\UU^{-1}] \otimes_R X \cong 
	K^{\bullet}(\xx-\UU;X[\UU^{-1}])$. So it follows that 
	\[
	\Hom_R(K^{\bullet}(\xx-\UU;R[\UU^{-1}]) \otimes_R X,Y) \cong 
	\Hom_R(K^{\bullet}(\xx-\UU;X[\UU^{-1}]),Y),
	\]
	which completes the proof.
\end{proof}

Another corollary is the following statement.

\begin{corollary} \label{dual-3}
	With the previous notation  there is an isomorphism 
	\[
	K_{\bullet}(\xx-\UU;\Hom_R(X,Y)[[\UU]]) \cong \Hom_R(X,K_{\bullet}(\xx-\UU;Y[[\UU]]))
	\]
	for any two $R$-complexes $X$ and $Y$.
\end{corollary}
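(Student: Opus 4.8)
The plan is to deduce this corollary by chaining together the two duality results already established, namely Theorem~\ref{dual-1} and Corollary~\ref{dual-2}, together with the standard tensor--Hom adjunction and the identification of the Koszul co-complex of inverse polynomials recorded in \ref{coh-1}. The key observation is that both sides of the claimed isomorphism can be rewritten as $\Hom_R(K^{\bullet}(\xx-\UU;X[\UU^{-1}]),Y)$: the left-hand side directly by Corollary~\ref{dual-2}, and the right-hand side by a short chain of natural isomorphisms.

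First I would start from the right-hand side and apply Theorem~\ref{dual-1} with $Y$ in place of the complex there, obtaining $K_{\bullet}(\xx-\UU;Y[[\UU]]) \cong \Hom_R(K^{\bullet}(\xx-\UU;R[\UU^{-1}]),Y)$ as $R$-complexes. Applying the functor $\Hom_R(X,-)$ and then the tensor--Hom adjunction for complexes gives
\[
\Hom_R(X,K_{\bullet}(\xx-\UU;Y[[\UU]])) \cong \Hom_R(X,\Hom_R(K^{\bullet}(\xx-\UU;R[\UU^{-1}]),Y)) \cong \Hom_R(K^{\bullet}(\xx-\UU;R[\UU^{-1}])\otimes_R X,Y).
\]
Next I would use that tensoring the Koszul co-complex commutes with its coefficients, i.e. $K^{\bullet}(\xx-\UU;R[\UU^{-1}])\otimes_R X \cong K^{\bullet}(\xx-\UU;R[\UU^{-1}]\otimes_R X)$, together with the isomorphism $R[\UU^{-1}]\otimes_R X \cong X[\UU^{-1}]$ from \ref{coh-1}, to rewrite the last term as $\Hom_R(K^{\bullet}(\xx-\UU;X[\UU^{-1}]),Y)$. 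By Corollary~\ref{dual-2} this is precisely $K_{\bullet}(\xx-\UU;\Hom_R(X,Y)[[\UU]])$, the left-hand side. Stringing the isomorphisms together yields the claim.

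The main point to be careful about is the tensor--Hom adjunction at the level of complexes: one must check that the adjunction isomorphism $\Hom_R(X,\Hom_R(K,Y))\cong\Hom_R(K\otimes_R X,Y)$ respects the total differentials and the usual sign conventions, where $K = K^{\bullet}(\xx-\UU;R[\UU^{-1}])$. Since $K$ is a bounded complex of free $R$-modules, this adjunction is an honest isomorphism of complexes, with no completion or derived subtleties intervening, so the verification is routine; the remaining identifications are the $R[\UU]$-linear ones already established in \ref{coh-1} and in the proof of Corollary~\ref{dual-2}.

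Alternatively, one could give a direct argument: since $K_{\bullet}(\xx-\UU;-)$ is the functor $K_{\bullet}(\xx-\UU;R[\UU])\otimes_{R[\UU]}(-)$, built degree-wise from finite direct sums with $R[\UU]$-linear Koszul differentials, the functor $\Hom_R(X,-)$ (taken over $R$, with $X$ merely an $R$-complex) commutes with it; combined with the natural isomorphism $\Hom_R(X,Y[[\UU]])\cong\Hom_R(X,Y)[[\UU]]$, coming from $\Hom_R(X,-)$ commuting with the products that define $(-)[[\UU]]$, this produces the result directly. I would nonetheless prefer the first route, since it reuses the duality statements \ref{dual-1} and \ref{dual-2} verbatim and isolates the only delicate step in a single standard adjunction.
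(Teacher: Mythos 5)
Your proposal is correct and is essentially the paper's own argument: the paper proves \ref{dual-3} by applying \ref{dual-1} to $\Hom_R(X,Y)$ (the first step of the proof of \ref{dual-2}), swapping via the adjointness isomorphism $\Hom_R(K^{\bullet}(\xx-\UU;R[\UU^{-1}]),\Hom_R(X,Y)) \cong \Hom_R(X,\Hom_R(K^{\bullet}(\xx-\UU;R[\UU^{-1}]),Y))$, and then applying \ref{dual-1} to $Y$, which is exactly your chain of isomorphisms read in the opposite direction, with \ref{dual-2} cited as a statement rather than unfolded. The ingredients (\ref{dual-1}, tensor--Hom adjunction, the identification $R[\UU^{-1}]\otimes_R X \cong X[\UU^{-1}]$ from \ref{coh-1}) coincide, so no further comparison is needed.
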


\begin{proof}
	As shown in the proof of \ref{dual-2} the complex on the left 
	is quasi-isomorphic to 
	\[
	\Hom_R(K^{\bullet}(\xx-\UU;R[\UU^{-1}]),\Hom_R(X,Y)) \cong 
	\Hom_R(X, \Hom_R(K^{\bullet}(\xx-\UU;R[\UU^{-1}]),Y))
	\]
	as follows by adjointness. It was shown in \ref{dual-1} that 
	there is an isomorphism 
	\[
	\Hom_R(K^{\bullet}(\xx-\UU;R[\UU^{-1}]),Y) \cong
	K_{\bullet}(\xx-\UU;Y[[\UU]]).
	\]
	This induces the isomorphism of the statement.
\end{proof}

%
%
%

As a corollary we get the expression of local cohomology related to a certain 
dual to the statement shown in \ref{weak-9}.

\begin{corollary} \label{dual-5}
	Let $\xx = x_1,\ldots,x_r$ denote a weakly pro-regular sequence 
	and $\mathfrak{a} = \xx R$. Let $X$ denote an $R$-complex. Then there are isomorphisms 
	\[
	H^i_{\mathfrak{a}}(X) \cong 
	\Tor^{R[\UU]}_{r-i}(R[\UU]/(\xx-\UU)R[\UU], X[\UU^{-1}]) \cong \Ext_{R[\UU]}^i(R[\UU]/(\xx-\UU)R[\UU], X[\UU^{-1}])
	\]
	for all $i \in \mathbb{Z}$.
\end{corollary}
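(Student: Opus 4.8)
The plan is to obtain \ref{dual-5} as the composite of two facts already available: the identification of local cohomology with the Koszul cohomology of the complex of inverse polynomials (\ref{coh-6}), and the $\Tor$--$\Ext$ description of that Koszul cohomology (\ref{coh-5}).

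First I would invoke \ref{coh-6}. Since $\xx$ is weakly pro-regular with $\mathfrak{a} = \xx R$, that result gives isomorphisms
\[
H^i_{\mathfrak{a}}(X) \cong H^i(\xx-\UU;X[\UU^{-1}]) = H^i(K^{\bullet}(\xx-\UU;X[\UU^{-1}]))
\]
for all $i \in \mathbb{Z}$ and any $R$-complex $X$. This reduces the problem to identifying the Koszul cohomology on the right with the asserted $\Tor$ and $\Ext$ modules.

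For that step I would run the argument of \ref{coh-5}. Starting from the isomorphism $K^{\bullet}(\xx-\UU;X[\UU^{-1}]) \cong K^{\bullet}(\xx-\UU;R[\UU]) \otimes_{R[\UU]} X[\UU^{-1}]$, which holds degree-wise and hence for complexes, I use that $\xx-\UU$ is an $R[\UU]$-regular sequence, so that $K^{\bullet}(\xx-\UU;R[\UU])$ is a bounded $R[\UU]$-free resolution of $H^r(\xx-\UU;R[\UU]) \cong R[\UU]/(\xx-\UU)R[\UU]$ placed in cohomological degree $r$. Consequently the tensor product computes the derived tensor product, and under the standard reindexing exchanging the cohomological Koszul degree $i$ with the homological degree $r-i$ one gets $H^i(\xx-\UU;X[\UU^{-1}]) \cong \Tor^{R[\UU]}_{r-i}(R[\UU]/(\xx-\UU)R[\UU], X[\UU^{-1}])$. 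The self-duality of the Koszul complex then yields the $\Ext^i_{R[\UU]}(R[\UU]/(\xx-\UU)R[\UU], X[\UU^{-1}])$ description as well. Composing with the isomorphism of \ref{coh-6} proves the claim.

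The only point needing attention is that \ref{coh-5} is phrased for an $R$-module $X$, whereas here $X$ is an arbitrary $R$-complex; but the argument just recalled is purely formal, so it carries over verbatim once $\Tor$ and $\Ext$ are read in the hyper (derived) sense, the bounded free $R[\UU]$-complex $K^{\bullet}(\xx-\UU;R[\UU])$ computing these derived functors against any complex $X[\UU^{-1}]$. Thus the main obstacle is merely the bookkeeping of the shift and self-duality that convert the cohomological index $i$ into the $\Tor$-index $r-i$; no genuinely new homological input is required.
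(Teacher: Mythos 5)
Your proof is correct and takes essentially the same route as the paper, which likewise deduces \ref{dual-5} by combining the isomorphism $H^i_{\mathfrak{a}}(X) \cong H^i(\xx-\UU;X[\UU^{-1}])$ of \ref{coh-6} with the $\Tor$/$\Ext$ identifications of \ref{coh-5}. Your closing remark that \ref{coh-5} is stated only for an $R$-module $X$ while \ref{dual-5} concerns a complex, and that the free-resolution argument carries over verbatim with $\Tor$ and $\Ext$ read in the hyper sense, is a careful touch on a point the paper's own proof passes over silently.
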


\begin{proof}
	If $\xx$ is a weakly pro-regular sequence, then  there are 
	isomorphisms 
	\[
	H^i_{\mathfrak{a}}(X) \cong H^i(\xx-\UU;X[\UU^{-1}]) 
	\]
	for all $i \in \mathbb{Z}$ (see \ref{coh-6}). 
	
	Then the statements are consequences of 
	\ref{coh-5}.
\end{proof}

Another duality statement is the following.

\begin{theorem} \label{dual-6}
	Let $\xx = x_1,\ldots,x_r$ denote a system of elements of $R$. Then there are 
	isomorphisms of complexes
	\[
	\Hom_R(K^{\bullet}(\xx-\UU;X[\UU^{-1}]),Y) \cong \Hom_{R[\UU]}(K^{\bullet}(\xx-\UU;X[\UU]),Y[[\UU]]) \cong 
	K_{\bullet}(\xx-\UU;\Hom_R(X,Y)[[\UU]])
	\]
	for all $R$-complexes $X,Y$.
\end{theorem}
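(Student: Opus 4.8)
The plan is to recognise that the rightmost isomorphism in the asserted chain,
\[
\Hom_R(K^{\bullet}(\xx-\UU;X[\UU^{-1}]),Y) \cong K_{\bullet}(\xx-\UU;\Hom_R(X,Y)[[\UU]]),
\]
is exactly Corollary \ref{dual-2} applied to the pair $X,Y$. Hence the whole statement reduces to inserting the new middle term $\Hom_{R[\UU]}(K^{\bullet}(\xx-\UU;X[\UU]),Y[[\UU]])$ into the picture; concretely, I would prove the leftmost isomorphism
\[
\Hom_R(K^{\bullet}(\xx-\UU;X[\UU^{-1}]),Y) \cong \Hom_{R[\UU]}(K^{\bullet}(\xx-\UU;X[\UU]),Y[[\UU]])
\]
and then conclude the full chain by transitivity together with \ref{dual-2}.

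First I would rewrite the coefficient complex on the left. By \ref{coh-1} there is an isomorphism of $R[\UU]$-complexes $X[\UU^{-1}] \cong R[\UU^{-1}] \otimes_{R[\UU]} X[\UU]$, and since the Koszul co-complex of $\xx-\UU$ over $R[\UU]$ is obtained by tensoring its coefficients with the fixed finite free complex $K^{\bullet}(\xx-\UU;R[\UU])$, this yields
\[
K^{\bullet}(\xx-\UU;X[\UU^{-1}]) \cong R[\UU^{-1}] \otimes_{R[\UU]} K^{\bullet}(\xx-\UU;X[\UU])
\]
as $R[\UU]$-complexes. Next I would apply $\Hom_R(-,Y)$ and use the tensor-Hom adjunction along the ring map $R \to R[\UU]$, in the form $\Hom_R(P \otimes_{R[\UU]} B, Y) \cong \Hom_{R[\UU]}(B, \Hom_R(P,Y))$ with $P = R[\UU^{-1}]$ and $B = K^{\bullet}(\xx-\UU;X[\UU])$, where $\Hom_R(P,Y)$ carries the $R[\UU]$-structure induced from $P$. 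This gives
\[
\Hom_R(K^{\bullet}(\xx-\UU;X[\UU^{-1}]),Y) \cong \Hom_{R[\UU]}(K^{\bullet}(\xx-\UU;X[\UU]), \Hom_R(R[\UU^{-1}],Y)).
\]
Finally I would invoke Proposition \ref{dual-0}, which supplies an isomorphism of $R[\UU]$-complexes $\Hom_R(R[\UU^{-1}],Y) \cong Y[[\UU]]$, turning the right-hand side into the middle term of the theorem and completing the leftmost isomorphism.

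The main point requiring care is that every identification is an isomorphism of \emph{$R[\UU]$-complexes} and that the Koszul differentials, namely multiplication by the elements $x_i-U_i \in R[\UU]$, are transported correctly through the adjunction and through \ref{dual-0}. One must also check that the adjunction respects the internal differentials of $X$ and $Y$ as complexes, with the usual sign conventions for the $\Hom$ of complexes; this is routine but is best stated degree-wise. The genuinely load-bearing input is that the identification in \ref{dual-0} is one of $R[\UU]$-modules (not merely of $R$-modules): this is exactly what guarantees that multiplication by $x_i-U_i$ on $\Hom_R(R[\UU^{-1}],Y)$ corresponds to multiplication by $x_i-U_i$ on $Y[[\UU]]$, so that the two Koszul differentials agree under the chain of isomorphisms. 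Once this compatibility is in place, combining the leftmost isomorphism with Corollary \ref{dual-2} proves the full statement.
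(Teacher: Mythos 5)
Your proposal is correct, and its proof of the left-hand isomorphism is essentially the paper's own: both rewrite $X[\UU^{-1}] \cong R[\UU^{-1}] \otimes_{R[\UU]} X[\UU]$ via \ref{coh-1}~(C), apply Hom-tensor adjunction along $R \to R[\UU]$, and finish with the $R[\UU]$-linear identification $\Hom_R(R[\UU^{-1}],Y) \cong Y[[\UU]]$ of \ref{dual-0}. Where you diverge is in the second half: you obtain the outer isomorphism $\Hom_R(K^{\bullet}(\xx-\UU;X[\UU^{-1}]),Y) \cong K_{\bullet}(\xx-\UU;\Hom_R(X,Y)[[\UU]])$ by citing Corollary \ref{dual-2} and then closing the chain by transitivity, whereas the paper re-derives it from scratch inside the proof of \ref{dual-6}: it uses adjointness to pass to $\Hom_{R[\UU]}(K^{\bullet}(\xx-\UU;R[\UU]),\Hom_R(X[\UU^{-1}],Y)) \cong K_{\bullet}(\xx-\UU;\Hom_R(X[\UU^{-1}],Y))$ and then computes $\Hom_R(X[\UU^{-1}],Y) \cong \Hom_R(R[\UU^{-1}],\Hom_R(X,Y)) \cong \Hom_R(X,Y)[[\UU]]$ by adjointness and \ref{dual-0}. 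Your shortcut is legitimate and non-circular, since \ref{dual-2} precedes \ref{dual-6} and is itself proved only from \ref{dual-0}, \ref{dual-1} and \ref{coh-1}; it buys economy, while the paper's self-contained derivation exhibits the useful intermediate complex $K_{\bullet}(\xx-\UU;\Hom_R(X[\UU^{-1}],Y))$. One cosmetic slip: what you call ``the rightmost isomorphism in the asserted chain'' is actually the composite (outer) isomorphism between the first and third complexes, not the displayed isomorphism between the middle and right terms; your argument handles this correctly anyway, since the left isomorphism together with the outer one yields the middle-to-right isomorphism by transitivity. You are also right that the load-bearing point is that \ref{dual-0} is an isomorphism of $R[\UU]$-complexes, which is exactly what transports the Koszul differentials $x_i-U_i$ correctly.
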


\begin{proof}
	By \ref{coh-1} (C) we have that $X[\UU^{-1}] \cong R[\UU^{-1}] \otimes_{R[\UU]} 
	X[\UU]$. Whence the fist complex is isomorphic to 
	\[
	\Hom_R(K^{\bullet}(\xx-\UU;R[\UU^{-1}] \otimes_{R[\UU]} 
	X[\UU]),Y) \cong \Hom_{R[\UU]}(K^{\bullet}(\xx-\UU;X[\UU]), \Hom_R(R[\UU^{-1}],Y))
	\]
	as follows by adjointness. Because of $\Hom_R(R[\UU^{-1}],Y) \cong Y[[\UU]]$ 
	(see \ref{dual-0}) this proves the first isomorphism. 
	
	Because of adjointness the first complex of the statement is isomorphic to 
	\[
	\Hom_{R[\UU]}(K^{\bullet}(\xx-\UU;R[\UU]), \Hom_R(X[\UU^{-1}],Y)) 
	\cong K_{\bullet}(\xx-\UU; \Hom_R(X[\UU^{-1}],Y))
	\]
	(see e.g. \cite[5.2.3]{SS}). But now there are isomorphisms 
	\[
	\Hom_R(X[\UU^{-1}],Y) \cong \Hom_R(R[\UU^{-1}] \otimes_R X,Y) \cong 
	\Hom_R(R[\UU^{-1}],\Hom_R(X,Y)) \cong \Hom_R(X,Y)[[\UU]]
	\]
	as they follow by adjointness and by \ref{dual-0}. Putting together this 
	proves the second isomorphism of the statement.
\end{proof}

Next we shall provide a certain kind of duality for a certain dual 
of the \v{C}ech cohomology. 

\begin{theorem} \label{dual-7}
	Let $\xx = x_1,\ldots,x_r$ denote a system of elements of a commutative 
	ring $R$. Let $X$ be an $R$-complex and let $I$ be an injective $R$-module. 
	then there are isomorphisms 
	\[
	\Hom_R(H^i(\check{C}_{\xx}(X)), I) \cong \Ext_{R[\UU]}^{r-i}(X[\UU]/{(\xx-\UU)}X[\UU], I[[\UU]])
	\]
	for all $i \in \mathbb{Z}$. 
\end{theorem}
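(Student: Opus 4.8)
The plan is to dualise the \v{C}ech cohomology description of \ref{coh-3} and \ref{dual-6} into the injective module $I$, and then to read the resulting complex as an $\Ext$-group over $R[\UU]$. Since $I$ is an injective $R$-module, $\Hom_R(-,I)$ is exact; in particular it commutes with taking (co)homology and sends quasi-isomorphisms to quasi-isomorphisms. Hence, writing $\Hom_R(\check{C}_{\xx}(X),I)$ in homological grading, there is a natural isomorphism $\Hom_R(H^i(\check{C}_{\xx}(X)),I) \cong H_i(\Hom_R(\check{C}_{\xx}(X),I))$ for all $i$. By the quasi-isomorphism $K^{\bullet}(\xx-\UU;X[\UU^{-1}]) \simeq \check{C}_{\xx}(X)$ of \ref{coh-3} and the exactness of $\Hom_R(-,I)$, this is in turn isomorphic to $H_i(\Hom_R(K^{\bullet}(\xx-\UU;X[\UU^{-1}]),I))$.

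First I would feed this into \ref{dual-6} with $Y=I$, which identifies the complex $\Hom_R(K^{\bullet}(\xx-\UU;X[\UU^{-1}]),I)$ with the $R[\UU]$-complex $\Hom_{R[\UU]}(K^{\bullet}(\xx-\UU;X[\UU]),I[[\UU]])$. The problem is thereby reduced to computing the homology of the latter. Using the self-duality of the Koszul complex, $K^{\bullet}(\xx-\UU;X[\UU])$ is isomorphic to $K_{\bullet}(\xx-\UU;X[\UU])$ under the reindexing $j \mapsto r-j$, so that $H_i(\Hom_{R[\UU]}(K^{\bullet}(\xx-\UU;X[\UU]),I[[\UU]]))$ becomes $H^{r-i}(\Hom_{R[\UU]}(K_{\bullet}(\xx-\UU;X[\UU]),I[[\UU]]))$. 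Now, exactly as in the proof of \ref{weak-5}, the sequence $\xx-\UU$ is regular on $X[\UU]$ (each $x_j-U_j$ is monic in $U_j$), so $K_{\bullet}(\xx-\UU;X[\UU])$ is a resolution of $X[\UU]/(\xx-\UU)X[\UU]$. It therefore remains to justify that this resolution may be used to compute $\Ext_{R[\UU]}^{r-i}(X[\UU]/(\xx-\UU)X[\UU],I[[\UU]])$.

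This last point is the main obstacle, since $K_{\bullet}(\xx-\UU;X[\UU])$ is a resolution by the typically non-projective $R[\UU]$-modules $X[\UU]$. The plan is to show that these modules are nevertheless acyclic for $\Hom_{R[\UU]}(-,I[[\UU]])$, i.e. that $\Ext^{j}_{R[\UU]}(X[\UU],I[[\UU]])=0$ for $j>0$; a resolution by such acyclic objects then computes $\Ext$. Here the injectivity of $I$ enters essentially: because $R[\UU]$ is free, hence flat, over $R$, tensoring an $R$-projective resolution of $X$ with $R[\UU]$ produces an $R[\UU]$-projective resolution of $X[\UU]=R[\UU]\otimes_R X$, and the induction-restriction adjunction yields $\Ext^{j}_{R[\UU]}(X[\UU],I[[\UU]]) \cong \Ext^{j}_{R}(X,I[[\UU]])$, where $I[[\UU]]$ is regarded as an $R$-module. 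But $I[[\UU]] \cong I^{\mathbb{N}^r}$ as an $R$-module (see \ref{prel-6}), a direct product of injective $R$-modules and hence itself injective, so these $\Ext$ groups vanish in positive degrees. Combining the displayed isomorphisms then gives $\Hom_R(H^i(\check{C}_{\xx}(X)),I) \cong \Ext_{R[\UU]}^{r-i}(X[\UU]/(\xx-\UU)X[\UU],I[[\UU]])$.

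Finally, since the statement is for an arbitrary $R$-complex $X$, I would carry out the $\Ext$-interpretation degree-wise and totalise, as is done throughout the paper: \ref{coh-3} and \ref{dual-6} already hold for complexes, while the regularity of $\xx-\UU$ and the acyclicity computation are degree-wise facts about each $X^n[\UU]$. Consequently $\Hom_{R[\UU]}(-,I[[\UU]])$ preserves the quasi-isomorphism $K_{\bullet}(\xx-\UU;X[\UU]) \to X[\UU]/(\xx-\UU)X[\UU]$, so the resulting complex represents $\RHom_{R[\UU]}(X[\UU]/(\xx-\UU)X[\UU],I[[\UU]])$, its homology is the asserted (hyper-)$\Ext$, and the chain of isomorphisms above delivers the theorem for all $i\in\mathbb{Z}$.
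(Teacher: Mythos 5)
Your proof is correct in substance, but it follows a genuinely different route from the paper's. The paper's proof of \ref{dual-7} is a three-step chain: the Tor-description of \v{C}ech cohomology (quoted from \ref{coh-5}, though in the form $H^i(\check{C}_{\xx}(X))\cong\Tor^{R[\UU]}_{r-i}(X[\UU]/(\xx-\UU)X[\UU],R[\UU^{-1}])$, i.e.\ with the roles of the two arguments exchanged), then Ext--Tor duality $\Hom_R(\Tor_j^{R[\UU]}(A,B),I)\cong\Ext^j_{R[\UU]}(A,\Hom_R(B,I))$ for the injective module $I$, and finally the identification $\Hom_R(R[\UU^{-1}],I)\cong I[[\UU]]$ of \ref{dual-0}. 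You never invoke \ref{coh-5} or Ext--Tor duality: you dualize \ref{coh-3} using exactness of $\Hom_R(-,I)$, pass to $\Hom_{R[\UU]}(K^{\bullet}(\xx-\UU;X[\UU]),I[[\UU]])$ by the first isomorphism of \ref{dual-6}, and identify its cohomology as $\Ext_{R[\UU]}$ via Koszul self-duality together with the acyclicity computation $\Ext^j_{R[\UU]}(X[\UU],I[[\UU]])\cong\Ext^j_R(X,I^{\mathbb{N}^r})=0$ for $j>0$. That computation is the real gain of your approach: it makes explicit why a resolution by the non-projective modules $X[\UU]$ may be used to compute a derived functor, a point the paper's route leaves hidden (exchanging the arguments of Tor when quoting \ref{coh-5} silently requires the parallel vanishing $\Tor_j^{R[\UU]}(X[\UU],R[\UU^{-1}])=0$ for $j>0$, which holds by flat base change since $R[\UU^{-1}]$ is $R$-free). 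The price is length; the paper's proof is three lines given its earlier results.

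One assertion in your last paragraph is false as written and must be reformulated: $\Hom_{R[\UU]}(-,I[[\UU]])$ does \emph{not} preserve the quasi-isomorphism $K_{\bullet}(\xx-\UU;X[\UU])\to X[\UU]/(\xx-\UU)X[\UU]$. Indeed, $I[[\UU]]$ is in general not an injective $R[\UU]$-module (by \ref{dual-0} it is co-induced from $R[\UU^{-1}]$, which is a torsion, hence non-flat, $R[\UU]$-module), so this functor is not exact; and if the quasi-isomorphism were preserved, then already for a module $X$ the complex $\Hom_{R[\UU]}(K_{\bullet}(\xx-\UU;X[\UU]),I[[\UU]])$ would be quasi-isomorphic to a module concentrated in one degree, which by your own chain of isomorphisms would force $\Hom_R(H^i(\check{C}_{\xx}(X)),I)=0$ for all $i\neq r$ --- absurd, e.g.\ for $i=0$, any $X$ with $\Gamma_{\xx R}(X)\neq 0$ and $I$ an injective cogenerator. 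What you actually need, and stated correctly in the module case, is the acyclic-resolution lemma: a left resolution by $\Hom_{R[\UU]}(-,I[[\UU]])$-acyclic modules computes $\Ext$, and the cohomology in positive degrees is exactly the failure of the claimed preservation. For an unbounded complex $X$ that lemma does not apply verbatim; the cleanest repair uses the adjunction already at your disposal, $\Hom_{R[\UU]}(N,I[[\UU]])\cong\Hom_R(R[\UU^{-1}]\otimes_{R[\UU]}N,I)$: since $\check{C}_{\UU}(R[\UU])$ is a bounded complex of $R$-free modules resolving $R[\UU^{-1}]$ up to shift (\ref{coh-1}), and since a quasi-isomorphism between bounded complexes of $R$-flat modules remains one after $\otimes_R X$ for an arbitrary complex $X$ (the principle used in \ref{comp-3}), the complex $R[\UU^{-1}]\otimes_{R[\UU]}K_{\bullet}(\xx-\UU;X[\UU])$ represents $R[\UU^{-1}]\lo_{R[\UU]}\bigl(X[\UU]/(\xx-\UU)X[\UU]\bigr)$, and exactness of $\Hom_R(-,I)$ then identifies your complex with $\RHom_{R[\UU]}(X[\UU]/(\xx-\UU)X[\UU],I[[\UU]])$. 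To be fair, the paper is no more careful here --- its proof quotes the module-level statements \ref{coh-5} and Ext--Tor duality without saying what the right-hand side means for a complex $X$ --- so once the sentence above is corrected, your argument is at least as complete as the original.
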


\begin{proof}
	By virtue of \ref{coh-5} there are isomorphisms 
	\[
	\Hom_R(H^i(\check{C}_{\xx}(X)),I) \cong \Hom_R(\Tor_{r-i}^{R[\UU]}(X[\UU]/(\xx-\UU)X[\UU],R[\UU^{-1}]),I)
	\]
	for all $i \in \mathbb{Z}$. The second modules are isomorphic to 
	\[
	\Ext_{R[\UU]}^{r-i}(X[\UU]/(\xx-\UU)X[\UU], \Hom_R(R[\UU^{-1}],I))
	\]
	as follows by the Ext-Tor duality (see e.g. \cite[1.4.1]{SS}). 
	Now (see \ref{dual-0}) there is an $R[\UU]$-isomorphism $\Hom_R(R[\UU^{-1}],I) \cong I[[\UU]]$ 
	which finishes the proof.
\end{proof}

In the following we shall give an application to weakly pro-regular sequences. 

\begin{corollary} \label{dual-8} 
	Let $\xx = x_1,\ldots,x_r$ denote a weakly pro-regular sequence of $R$ 
	and $\mathfrak{a} = \xx R$. Let $M$ an $R$-module and let $I$ an injective 
	$R$-module. Then there are natural isomorphisms 
	\[
	\Hom_R(H^i_{\mathfrak{a}}(M),I) \cong \Ext_{R[\UU]}^{r-i}(M[\UU]/(\xx-\UU)M[\UU], I[[\UU]])
	\]
	for all $i \in \mathbb{Z}$. 
\end{corollary}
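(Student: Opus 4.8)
The plan is to obtain the statement as a direct specialization of Theorem~\ref{dual-7}, using weak pro-regularity only to rewrite \v{C}ech cohomology as local cohomology. So the strategy is: identify $H^i(\check{C}_{\xx}(M))$ with $H^i_{\mathfrak{a}}(M)$, then feed the module $M$ into the already-proved duality of \ref{dual-7}.

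First I would invoke the weak pro-regularity of $\xx$. As recalled after Corollary~\ref{weak-4} and used in the proof of \ref{coh-6}, when $\xx$ is weakly pro-regular with $\mathfrak{a} = \xx R$ the \v{C}ech complex $\check{C}_{\xx}(M)$ is a representative of ${\rm R}\Gamma_{\mathfrak{a}}(M)$ in the derived category. Hence there are natural isomorphisms $H^i(\check{C}_{\xx}(M)) \cong H^i_{\mathfrak{a}}(M)$ for every $i \in \mathbb{Z}$.

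Next I would apply Theorem~\ref{dual-7} to the $R$-complex $X = M$, regarded as a complex concentrated in degree zero, together with the given injective module $I$. This gives
\[
\Hom_R(H^i(\check{C}_{\xx}(M)), I) \cong \Ext_{R[\UU]}^{r-i}(M[\UU]/(\xx-\UU)M[\UU], I[[\UU]])
\]
for all $i$. Substituting the identification $H^i(\check{C}_{\xx}(M)) \cong H^i_{\mathfrak{a}}(M)$ from the first step into the left-hand side yields precisely the asserted isomorphism, and its naturality is inherited from the naturality of the two ingredients.

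The only point that needs care---and the step I expect to be the genuine content rather than formal bookkeeping---is the first one: the isomorphism $H^i(\check{C}_{\xx}(M)) \cong H^i_{\mathfrak{a}}(M)$ is exactly what weak pro-regularity buys, and one must be sure it holds as an identification of $R$-modules, so that applying the (exact) functor $\Hom_R(-,I)$ is legitimate and compatible, and not merely up to an abstract isomorphism in the derived category. Once this is in place everything else is a formal substitution into Theorem~\ref{dual-7}, so no further obstacle arises.
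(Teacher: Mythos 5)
Your proposal is correct and is essentially the paper's own proof: the paper likewise uses weak pro-regularity to identify $H^i_{\mathfrak{a}}(M) \cong H^i(\check{C}_{\xx}(M))$ and then invokes the general \v{C}ech-cohomology duality (Theorem~\ref{dual-7}, specialized to $X=M$), which is exactly your two steps. The only discrepancy is that the paper's proof cites \ref{dual-5} at this point, which appears to be a typo for \ref{dual-7}; your reference is the right one.
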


\begin{proof}
	In case $\xx = x_1,\ldots,x_r$ is a weakly pro-regular sequence it 
	follows that $H^i_{\mathfrak{a}}(M) \cong H^i(\check{C}_{\xx}(M))$ 
	for all $i \in \mathbb{Z}$ (see e.g. \cite[7.4.4]{SS}). Then the statement 
	follows by virtue of \ref{dual-5}.
\end{proof}

\begin{remark} \label{dual-9}
	(A) Let $(R,\mathfrak{m})$ denote a local Gorenstein ring with $E = E_R(R/\mathfrak{m})$ the injective hull of the residue field (see \cite{Bh} for the definition). Let 
	$\xx = x_1,\ldots,x_r$ denote a system of parameters of $R$. Let 
	$M$ be an $R$-module. Then there are natural isomorphisms 
	\[
	H^i_{\mathfrak{m}}(M) \cong \Tor_{d-i}^R(M,E)
	\]
	for all $i \geq 0$. This follows since $\check{C}_{\xx}$ is a flat 
	resolution of $E$ (see e.g. \cite{SS}). By the Ext-Tor duality it 
	provides isomorphisms 
	\[
	\Hom_R(H^i_{\mathfrak{m}}(M),E) \cong 
	\Hom_R(\Tor_{d-i}^R(M,E),E) \cong \Ext_R^{d-i}(M,\hat{R})
	\]
	This is a special case of the local duality (see e.g. \cite[10.5.5]{SS})\\
	(B) Let $(R,\mathfrak{m})$ denote a local Noetherian ring and $d = \dim R$. 
	By view of the result in \ref{dual-8} there is an isomorphism 
	\[
	\Hom_R(H^i_{\mathfrak{m}}(M),E) \cong 
	\Ext_{R[\UU]}^{d-i}(M[\UU]/(\xx-\UU)M[\UU], E[[\UU]])
	\]
	for all $i \geq 0$, where $\xx = x_1,\ldots,x_d$ denotes a system of parameters and $\UU = U_1,\ldots,U_d$. It follows that \ref{dual-8} 
	is -- in a certain sense -- an extension of 
	the local duality theorem of a Gorenstein ring. A more detailed discussion 
	about these dualities are in preparation.
\end{remark}

\section{Enlargments of Sequences}

In this Section we shall use properties of Koszul complexes 
in order to show the behaviour of Koszul (co-)homology by enlarging the system of 
elements.

\begin{lemma} \label{enl-1}
	Let $\xx = x_1,\ldots,x_r$ denote a system of elements of $R$ and $y \in R$.
	Let $\UU = U_1,\ldots,U_r,V$ denote a system of variables over $R$. 
	Let $X$ denote an $R$-complex. Then there are short exact sequences 
	\begin{itemize}
		\item[(a)] 
		\begin{gather*}
		0 \to H_0(y-V;H_i(\xx-\UU;X[[\UU]])[[V]]) \to H_i(\xx-\UU,y-V;X[[\UU,V]]) 
		\\ \to H_1(y-V;H_{i-1}(\xx-\UU;X[[\UU]])[[V]]) \to 0, 
		\end{gather*}
		\item[(b)] 
		\begin{gather*}
		0 \to H^1(y-V;H^{i-1}(\xx-\UU;X[[\UU^{-1}]])[[V^{-1}]]) \to H^i(\xx-\UU,y-V;X[[(\UU,V)^{-1}]]) 
		\\ \to H^0(y-V;H^i(\xx-\UU;X[[\UU^{-1}]])[[V^{-1}]]) \to 0
		\end{gather*}
	\end{itemize}
	for all $i \in \mathbb{Z}$.
\end{lemma}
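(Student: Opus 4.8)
The plan is to deduce both short exact sequences from the standard ``single element'' Koszul sequence applied to a complex, after using the recursive definition of Koszul (co-)complexes to peel off the last element $y-V$. First I would record the elementary tool. For a single element $z$ of a commutative ring and a complex $C$ over that ring, the Koszul complex $K_{\bullet}(z;C)=C(\mu(z;C))$ is the cone of multiplication by $z$, so \ref{prel-1} (A) provides a short exact sequence $0\to C\to K_{\bullet}(z;C)\to C^{[1]}\to 0$ whose long exact homology sequence has connecting map $\pm z$. Extracting cokernels and kernels yields
\[
0\to H_i(C)/zH_i(C)\to H_i(z;C)\to (0:_{H_{i-1}(C)}z)\to 0,
\]
that is, $0\to H_0(z;H_i(C))\to H_i(z;C)\to H_1(z;H_{i-1}(C))\to 0$. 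Dually $K^{\bullet}(z;C)=F(\mu(z;C))$ is the fibre, and the sequence $0\to C^{[-1]}\to K^{\bullet}(z;C)\to C\to 0$ gives $0\to H^1(z;H^{i-1}(C))\to H^i(z;C)\to H^0(z;H^i(C))\to 0$.

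Next I would set up the reduction for (a). By the recursive construction of Koszul complexes (\ref{prel-2}),
\[
K_{\bullet}(\xx-\UU,y-V;X[[\UU,V]])=K_{\bullet}(y-V;K_{\bullet}(\xx-\UU;X[[\UU,V]])).
\]
Using $X[[\UU,V]]\cong (X[[\UU]])[[V]]$ and the fact that $(-)[[V]]$ commutes with finite direct sums while the differential of $K_{\bullet}(\xx-\UU;-)$ involves only the elements $x_i-U_i$, I would identify $K_{\bullet}(\xx-\UU;X[[\UU,V]])\cong C[[V]]$ with $C=K_{\bullet}(\xx-\UU;X[[\UU]])$. Since products commute with homology (\ref{prel-6} (D)), $H_i(C[[V]])\cong H_i(\xx-\UU;X[[\UU]])[[V]]$, so substituting $z=y-V$ and the complex $C[[V]]$ into the homological sequence above gives precisely (a). For (b) I would argue symmetrically: by \ref{prel-2} one has $K^{\bullet}(\xx-\UU,y-V;X[(\UU,V)^{-1}])=K^{\bullet}(y-V;K^{\bullet}(\xx-\UU;X[(\UU,V)^{-1}]))$, and since $X[(\UU,V)^{-1}]\cong (X[\UU^{-1}])[V^{-1}]$ with $(-)[V^{-1}]=(-)\otimes_R R[V^{-1}]$ exact (the module $R[V^{-1}]$ being $R$-free, see \ref{coh-1}), one gets $K^{\bullet}(\xx-\UU,y-V;X[(\UU,V)^{-1}])\cong D[V^{-1}]$ with $D=K^{\bullet}(\xx-\UU;X[\UU^{-1}])$ and $H^i(D[V^{-1}])\cong H^i(\xx-\UU;X[\UU^{-1}])[V^{-1}]$; the cohomological sequence then yields (b).

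The steps that require genuine care, rather than bookkeeping, are the two compatibility isomorphisms $K_{\bullet}(\xx-\UU;X[[\UU,V]])\cong (K_{\bullet}(\xx-\UU;X[[\UU]]))[[V]]$ and its inverse-polynomial analogue: one must check that forming the Koszul (co-)complex for the $V$-free sequence $\xx-\UU$ commutes with adjoining the extra variable $V$, both on the level of complexes and, via exactness of $(-)[V^{-1}]$ and the product-homology compatibility of $(-)[[V]]$, on the level of homology. Once these are in place, the remainder is the formal extraction of the cone and fibre sequences and the identification of $H_0,H_1$ (respectively $H^0,H^1$) of the single element $y-V$ with the cokernel and kernel of multiplication by $y-V$.
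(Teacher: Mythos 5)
Your proof is correct and takes essentially the same route as the paper: the paper quotes the standard short exact sequence $0 \to H_0(s;H_i(\ts;Y)) \to H_i(\ts,s;Y) \to H_1(s;H_{i-1}(\ts;Y)) \to 0$ from the literature, applies it with $\ts = \xx-\UU$, $s = y-V$, $Y = X[[\UU,V]]$, and concludes with the same identification $H_i(\xx-\UU;X[[\UU,V]]) \cong H_i(\xx-\UU;X[[\UU]])[[V]]$ via homology commuting with direct products. The only differences are that you derive that quoted sequence from the cone/fibre construction rather than citing it, and you write out part (b), which the paper dismisses as ``similar arguments.''
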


\begin{proof}
	We proof the statement in (a). The proof of (b) follows similar arguments. 
	For a system of elements $\ts = t_1,\ldots,t_r$ and $s \in R$ there is a 
	short exact sequence of Koszul homology
	\[
	0 \to H_0(s;H_i(\ts;Y)) \to H_i(\ts,s;Y) \to H_1(s;H_{i-1}(\ts;Y)) \to 0
	\] 
	for all $i \in \mathbb{Z}$ and an $R$-complex $Y$ (see e.g. \cite[5.2.4]{SS}).
	We apply this sequence to $\ts = \xx-\UU, s= y-V$ and $Y = X[[\UU,V]]$. In 
	order to conclude note that 
	$$
	H_i(\xx-\UU;X[[\UU,V]]) \cong H_i(\xx-\UU;X[[\UU]])[[V]] 
	$$ 
	for all $i \in \mathbb{Z}$. This follows by view of the corresponding Kozul 
	complex because homology commutes with direct products.  
\end{proof}

As an application for local cohomology and derived functors of the completion we get the following. 

\begin{proposition} \label{enl-2}
	Let $\xx = x_1,\ldots,x_k$ denote a weakly pro-regular sequence
	in a commutative ring $R$ and put
	$\mathfrak{a} = \xx R$. Suppose that $R$ is of bounded $yR$-torsion
	and such that $\xx,y$ is a weakly pro-regular sequence too. Let $X$
	denote a complex of $R$-modules. Then there are short exact sequences
	\begin{itemize}
		\item[(a)] $0 \to \Lambda_0^{yR}(\Lambda_i^{\mathfrak{a}}(X) )\to
		\Lambda_i^{(\mathfrak{a}+yR)}(X)
		\to \Lambda_1^{yR}(\Lambda_{i-1}^{\mathfrak{a}}(X) )\to 0$ and
		\item[(b)] $0 \to H^1_{yR}(H^{i-1}_{\mathfrak{a}}(X)) \to
		H^i_{(\mathfrak{a}+yR)}(X) \to H^0_{yR}(H^i_{\mathfrak{a}}(X)) \to 0$ 
	\end{itemize}
	for all $i \in \mathbb{Z}$.
\end{proposition}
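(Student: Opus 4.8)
The plan is to obtain both sequences from Lemma~\ref{enl-1} by translating each Koszul homology resp. Koszul cohomology occurring there into a left derived completion functor resp. a local cohomology module, using the dictionaries \ref{weak-9} and \ref{coh-6}. The only extra ingredient is that the single element $y$ is weakly pro-regular: by hypothesis $R$ is of bounded $yR$-torsion, and by \ref{prel-5}~(B) applied with $M = R$ this says precisely that $y$ is $R$-weakly pro-regular, hence weakly pro-regular. Thus \ref{weak-9} and \ref{coh-6} are available for the one-element sequence $y$ (with extra variable $V$) applied to any $R$-complex.

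For part (a) I would start from the short exact sequence of Lemma~\ref{enl-1}~(a). Its middle term $H_i(\xx-\UU, y-V; X[[\UU,V]])$ is isomorphic to $\Lambda_i^{\mathfrak{a}+yR}(X)$ by \ref{weak-9} applied to the weakly pro-regular sequence $\xx, y$ with ideal $\mathfrak{a}+yR$. In the two outer terms I would first rewrite the coefficient complex via \ref{weak-9} for $\xx$, namely $H_j(\xx-\UU; X[[\UU]]) \cong \Lambda_j^{\mathfrak{a}}(X)$, turning them into $H_0(y-V; \Lambda_i^{\mathfrak{a}}(X)[[V]])$ and $H_1(y-V; \Lambda_{i-1}^{\mathfrak{a}}(X)[[V]])$. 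A final application of \ref{weak-9}, now to the weakly pro-regular element $y$ and to the module $\Lambda_j^{\mathfrak{a}}(X)$ viewed as a complex concentrated in one degree, identifies these with $\Lambda_0^{yR}(\Lambda_i^{\mathfrak{a}}(X))$ and $\Lambda_1^{yR}(\Lambda_{i-1}^{\mathfrak{a}}(X))$. Substituting the three isomorphisms into Lemma~\ref{enl-1}~(a) yields the claim.

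For part (b) I would run the dual argument with \ref{coh-6} in place of \ref{weak-9} and Lemma~\ref{enl-1}~(b) in place of (a). The middle term $H^i(\xx-\UU, y-V; X[(\UU,V)^{-1}])$ becomes $H^i_{\mathfrak{a}+yR}(X)$, the inner coefficient $H^j(\xx-\UU; X[\UU^{-1}])$ is rewritten as $H^j_{\mathfrak{a}}(X)$, and the remaining single-variable Koszul co-complexes $H^0(y-V; H^i_{\mathfrak{a}}(X)[V^{-1}])$ and $H^1(y-V; H^{i-1}_{\mathfrak{a}}(X)[V^{-1}])$ collapse to $H^0_{yR}(H^i_{\mathfrak{a}}(X))$ and $H^1_{yR}(H^{i-1}_{\mathfrak{a}}(X))$ by \ref{coh-6} for the element $y$.

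I expect no serious obstacle; the argument is essentially bookkeeping with the dictionaries \ref{weak-9} and \ref{coh-6}. The one point deserving care is the threefold use of each dictionary — once for $\xx, y$ to read off the middle term, once for $\xx$ alone to expose the inner functor, and once for $y$ alone to resolve the single $(y-V)$-Koszul (co-)homology — together with the verification that the formal power series resp. inverse polynomial structure in the extra variable $V$ on the coefficient modules produced by Lemma~\ref{enl-1} agrees with the one demanded by \ref{weak-9} resp. \ref{coh-6}. This compatibility is exactly the identification $H_i(\xx-\UU; X[[\UU,V]]) \cong H_i(\xx-\UU; X[[\UU]])[[V]]$ already established inside the proof of \ref{enl-1}, so it need only be invoked rather than reproved.
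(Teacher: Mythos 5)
Your proposal is correct and takes essentially the same route as the paper: the paper's own proof is the one-line observation that the statements follow from Lemma \ref{enl-1} combined with the dictionaries \ref{weak-9} and \ref{coh-6}, which is precisely the threefold translation (for $\xx,y$, for $\xx$, and for the single element $y$) that you carry out in detail. Your extra remarks — that bounded $yR$-torsion makes $y$ weakly pro-regular via \ref{prel-5}~(B), and that the identification $H_i(\xx-\UU;X[[\UU,V]]) \cong H_i(\xx-\UU;X[[\UU]])[[V]]$ from the proof of \ref{enl-1} supplies the needed compatibility of the $V$-adic structures — are exactly the bookkeeping the paper leaves implicit (and which it makes explicit only later, in Remark \ref{enl-3}~(B)).
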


\begin{proof}
	The statements are consequences of \ref{enl-1} by view of \ref{weak-9} and 
	\ref{coh-6}.
\end{proof}

A different proof of \ref{enl-2} is given in \cite[9.4.6]{SS}. There the 
proof for the statement in (a) is based on a certain spectral sequence argument. 

\begin{remark} \label{enl-3}
	(A) With the notation of \ref{enl-2} the exact sequences in (b) are consequences 
	of the mapping cone construction of the Koszul complex. In fact it yields 
	a long exact cohomology sequence
	\[
	\cdots \to H^{i-1}_{\mathfrak{a}}(X) \to H^{i-1}_{\mathfrak{a}}(X)_y 
	\to H^i_{(\mathfrak{a}+yR)}(X) \to H^i_{\mathfrak{a}}(X) \to 
	H^i_{\mathfrak{a}}(X)_y \to \cdots,
	\]
	where $\iota : H^i_{\mathfrak{a}}(X) \to H^i_{\mathfrak{a}}(X)_y$ is the natural map to the localization. This is well-known, see e.g. \cite[8.1.2]{BrS} for a proof in the case of $X$ an $R$-module.\\
	(B)
	It was a natural question to the author whether there is a corresponding 
	long exact sequence for the statement in \ref{enl-2} (a). Here is an 
	answer: there is a long exact sequence as follows by the mapping cone construction 
	\[
	\cdots \to \Lambda_i^{\mathfrak{a}}(X)[[V]] \stackrel{y-V}{\longrightarrow} 
	\Lambda_i^{\mathfrak{a}}(X)[[V]] \to \Lambda_i^{(\mathfrak{a}+yR)}(X) 
	\to \Lambda_{i-1}^{\mathfrak{a}}(X)[[V]] \stackrel{y-V}{\longrightarrow} 
	\Lambda_{i-1}^{\mathfrak{a}}(X)[[V]] \to \cdots
	\] 
	Note that $\Lambda_i^{\mathfrak{a}}(X)[[V]] \stackrel{y-V}{\longrightarrow} 
	\Lambda_i^{\mathfrak{a}}(X)[[V]]$ is the Koszul complex $K_{\bullet}(y-V;\Lambda_i^{\mathfrak{a}}(X)[[V]])$. Then, since $y$ is of bounded $R$-torsion we get 
	\[
	\Lambda_j^{yR}(\Lambda_i^{\mathfrak{a}}(X))
	\cong H_j(y-V;\Lambda_i^{\mathfrak{a}}(X)[[V]])) 
	\]
	as follows by virtue of \ref{weak-9}. \\
	(C) Therefore there is also another family of short exact sequences 
	\[
	0 \to \Lambda_i^{\mathfrak{a}}(X)[[V]]/(y-V)\Lambda_i^{\mathfrak{a}}(X)[[V]] \to 
	\Lambda_i^{(\mathfrak{a}+yR)}(X) \to 0:_{\Lambda_{i-1}^{\mathfrak{a}}(X)[[V]]} (y-V) 
	\to 0
	\]
	for all $i \in \mathbb{Z}$.
\end{remark}

\begin{remark} \label{enl-4}
	Let $y \in R$ be a weakly pro-regular element, i.e. $R$ is of bounded $yR$-torsion. 
	Let $M$ be an $R$-module. Then there is a five term short exact sequence 
	\[
	0\to \Lambda_1^{yR}(M) \to \Hom_R(R_y, M) \to M \to \Lambda_0^{yR}(M) \to \Ext^1_R(R_y, M) \to 0
	\]
	(see \cite[8.4.6]{SS}). As an application we get an approximation of $\Lambda_j^{yR}(\Lambda_i^{\mathfrak{a}}(X)), j = 0,1,$
	namely there is an exact sequence
	\[
	0\to \Lambda_1^{yR}(\Lambda_i^{\mathfrak{a}}(X)) \to \Hom_R(R_y, \Lambda_i^{\mathfrak{a}}(X)) \to \Lambda_i^{\mathfrak{a}}(X) \to \Lambda_0^{yR}(\Lambda_i^{\mathfrak{a}}(X)) \to \Ext^1_R(R_y, \Lambda_i^{\mathfrak{a}}(X)) \to 0
	\]
	for all $i \in \mathbb{Z}$.
\end{remark}

This provides us to the following.

\begin{corollary} \label{enl-5}
	Fix the assumptions of \ref{enl-2} and let $i \in \mathbb{Z}$. Suppose that $\Lambda_i^{\mathfrak{a}}(X)$ is $yR$-complete. 
	\begin{itemize}
		\item[(a)] $\Lambda_0^{yR}(\Lambda_i^{\mathfrak{a}}(X)) \cong \Lambda_i^{\mathfrak{a}}(X)$ and $\Lambda_1^{yR}(\Lambda_i^{\mathfrak{a}}(X)) = 0$.
		\item[(b)] There is a short exact sequence
		$0 \to \Lambda_i^{\mathfrak{a}}(X) \to \Lambda_i^{(\mathfrak{a}+yR)}(X)
		\to \Lambda_1^{yR}(\Lambda_{i-1}^{\mathfrak{a}}(X) )\to 0$.
		\item[(c)] If in addition $\Lambda_{i-1}^{\mathfrak{a}}(X) $ is $yR$-complete, 
		then $\Lambda_i^{\mathfrak{a}}(X) \cong \Lambda_i^{(\mathfrak{a}+yR)}(X)$.
	\end{itemize}
\end{corollary}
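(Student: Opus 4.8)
The plan is to make part (a) the heart of the argument and then obtain (b) and (c) as essentially formal consequences of the short exact sequence in \ref{enl-2}(a). For (a), I would apply the five-term exact sequence of \ref{enl-4} to the module $M := \Lambda_i^{\mathfrak{a}}(X)$ (legitimate, since $y$ is weakly pro-regular by hypothesis), namely
\[
0\to \Lambda_1^{yR}(M) \to \Hom_R(R_y, M) \to M \to \Lambda_0^{yR}(M) \to \Ext^1_R(R_y, M) \to 0 .
\]
From this it suffices to prove that $\Hom_R(R_y, M) = 0$ and $\Ext^1_R(R_y, M) = 0$ whenever $M$ is $yR$-complete: the first vanishing forces $\Lambda_1^{yR}(M) = 0$ together with injectivity of the map $M \to \Lambda_0^{yR}(M)$, and the second forces its surjectivity, so that $\Lambda_0^{yR}(M) \cong M$. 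Thus both assertions of (a) fall out at once.

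For the two vanishings I would use the identifications $\Hom_R(R_y, M) \cong \varprojlim\{M, y\}$ (recorded in the proof of \ref{hoc-2}(b)) and $\Ext^1_R(R_y, M) \cong \varprojlim{}^1\{M, y\}$, where $\{M,y\}$ is the tower $\cdots \stackrel{y}{\to} M \stackrel{y}{\to} M$; both arise as the kernel and cokernel of the operator $(m_n) \mapsto (m_n - y m_{n+1})$ on $\prod_{n\ge 0} M$ obtained by applying $\Hom_R(-,M)$ to the resolution $0\to R[U]\stackrel{1-yU}{\to} R[U]\to R_y\to 0$. An element of $\varprojlim\{M,y\}$ is a sequence with $m_n = y m_{n+1}$, hence $m_n \in \bigcap_k y^k M$; since a $yR$-complete module is in particular $yR$-separated, this intersection is zero and $\Hom_R(R_y,M)=0$. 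For the $\varprojlim{}^1$ term I would solve $m_n - y m_{n+1} = a_n$ by the series $m_n = \sum_{k\ge 0} y^k a_{n+k}$, whose partial sums are $yR$-adically Cauchy and hence converge in $\hat{M}^{yR}\cong M$; this shows the operator is surjective, so $\Ext^1_R(R_y,M)=0$. This completes (a).

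Part (b) is then immediate: I would take the short exact sequence of \ref{enl-2}(a) and rewrite its left-hand term $\Lambda_0^{yR}(\Lambda_i^{\mathfrak{a}}(X))$ as $\Lambda_i^{\mathfrak{a}}(X)$ via the isomorphism of (a). For (c), the extra hypothesis that $\Lambda_{i-1}^{\mathfrak{a}}(X)$ is $yR$-complete lets me apply the $\Lambda_1$-vanishing of (a) at index $i-1$, so the right-hand term $\Lambda_1^{yR}(\Lambda_{i-1}^{\mathfrak{a}}(X))$ of the sequence in (b) is zero and the remaining map $\Lambda_i^{\mathfrak{a}}(X)\to \Lambda_i^{(\mathfrak{a}+yR)}(X)$ is an isomorphism.

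The main obstacle is the self-contained verification that a classically $yR$-complete module is derived $yR$-complete, i.e. $\Hom_R(R_y,M)=\Ext^1_R(R_y,M)=0$. The delicate point is to fix precisely what ``$yR$-complete'' means: I would take it to say that the natural map $M\to \hat{M}^{yR}$ is an isomorphism, so that separatedness ($\bigcap_k y^k M = 0$) is available for the $\varprojlim$ computation and genuine $y$-adic convergence is available for the $\varprojlim{}^1$ computation. Once these two vanishings are in hand, everything else is formal diagram-chasing on the exact sequences of \ref{enl-4} and \ref{enl-2}.
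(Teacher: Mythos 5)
Your proposal is correct and follows essentially the same route as the paper: (a) comes from the five-term sequence of \ref{enl-4} combined with the vanishing of $\Hom_R(R_y,M)$ and $\Ext_R^1(R_y,M)$ for a $yR$-complete module $M$, (b) follows by substituting (a) into the short exact sequence of \ref{enl-2}(a), and (c) by the $\Lambda_1^{yR}$-vanishing of (a) applied at index $i-1$. The only difference is that the paper simply cites an external result (Simon, [SS, 3.1.9]) for the two vanishings, whereas you verify them directly via the identifications $\Hom_R(R_y,M)\cong\varprojlim\{M,y\}$ and $\Ext_R^1(R_y,M)\cong\varprojlim{}^1\{M,y\}$, using $yR$-separatedness for the kernel and $y$-adic convergence of $\sum_{k\geq 0}y^k a_{n+k}$ for the cokernel --- a correct, self-contained substitute for that citation.
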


\begin{proof}
	If it is $yR$-complete, then 
	\[
	\Hom_R(R_y,\Lambda_j^{\mathfrak{a}}(X)) = \Ext_R^1(R_y,\Lambda_j^{\mathfrak{a}}(X)) = 0
	\]
	(see \cite{Sp3} or \cite[3.1.9]{SS} for the details). This proves (a) 
	by view of \ref{enl-4}.  
	
	If $\Lambda_j^{\mathfrak{a}}(X)$ is $yR$-separated, then $\Hom_R(R_y,\Lambda_j^{\mathfrak{a}}(X)) = 0$.  Therefore the result in (b) is 
	a consequences of \ref{enl-4} and \ref{enl-2}.
	
	Finally (c) is a consequence of (b).
\end{proof}


\bibliographystyle{siam}

\bibliography{cech}

\end{document}